\def\crulefill{\leavevmode\leaders\hrule height 1pt\hfill\kern 0pt}
\long\def\QUERY#1{%
\leavevmode\newline%
\noindent$\star\star\star$\thinspace\textsf{Comment/Query}\crulefill\newline%
   \space #1\newline\hbox to 120mm{\crulefill}$\star\star\star$\newline}
\newtheorem{thm}{Theorem}[section]
\newtheorem{Lemma}[thm]{Lemma}
\newtheorem{Cor}[thm]{Corollary}
\newtheorem{Prop}[thm]{Proposition}
\theoremstyle{definition}
\newtheorem{example}[thm]{Example}
\newtheorem{Defn}[thm]{Definition}
\newtheorem{rem}[thm]{Remark}
\newtheorem{rems}[thm]{Remarks}
\numberwithin{equation}{thm}
\def\enumerate{\begingroup\ifnum\@enumdepth>3\@toodeep\else
      \advance\@enumdepth\@ne
      \edef\@enumctr{enum\romannumeral\the\@enumdepth}%
      \topsep\z@\parskip\z@
      \list{\csname label\@enumctr\endcsname}
        {\@nmbrlisttrue\let\@listctr\@enumctr
         \parsep\z@\itemsep\z@\topsep\z@
         \setcounter{\@enumctr}{0}
         \def\makelabel##1{\hss\llap{\rm ##1}}
       }\fi}
\let\bar=\overline
\let\epsilon=\varepsilon
\def\({\big(}
\def\){\big)}
\def\ZC{\mathcal Z}
\def\0{\underline{0}}
\def\ts{\tilde\s}
\def\ss{\mathsf s}
\def\ts{\mathsf t}
  \gdef\set#1{\mathinner{\lbrace\,{\mathcode`\|"8000%
                                   \let|\midvert #1}\,\rbrace}}
  \gdef\seT#1{\mathinner{\Big\lbrace\,{\mathcode`\|"8000%
                                   \let|\midverT #1}\,\Big\rbrace}}
\def\midvert{\egroup\mid\bgroup}
\def\midverT{\egroup\,\Big|\,\bgroup}
\def\Set[#1]#2|#3|{\Big\{\ #2\ \Big| \
           \vcenter{\hsize #1mm\centering #3}\Big\}}
\def\up{{\boldsymbol\upsilon}}
\def\bsq{{\boldsymbol q}}
\def\fS{{\mathfrak S}}\def\fT{{\mathfrak T}}
\def\fD{{\mathfrak D}}\def\fJ{{\mathfrak J}}
\def\fH{{\mathfrak H}}
\def\sA{{\mathcal A}}
\def\sE{{\mathcal E}}
\def\sH{{\mathcal H}}
\def\sL{{\mathcal L}}
\def\sR{{\mathcal R}}
\def\sS{{\mathcal S}}
\def\sT{{\mathcal T}}
\def\sZ{{\mathcal Z}}
\def\la{{\lambda}}
\def\lra{{\longrightarrow}}
\def\ro{{\text{\rm ro}}}
\def\co{{\text{\rm co}}}
\def\vph{{\varphi}}
\def\sfC{{\mathsf C}}
\def\ssS{{\mathsf S}}
\def\ssT{{\mathsf T}}
\def\bfT{{\mathbf T}}
\def\RS{\overset{\textrm{RS}}\longrightarrow}
\def\RSKs{\overset{\textrm{\rm RSKs}}\longrightarrow}
\def\sS{\mathcal S}
\def\bsS{{\boldsymbol{\mathcal S}}}
\def\bsH{{\boldsymbol{\mathcal H}}}
\def\bfT{{\mathbf T}}
\def\bsDe{{\boldsymbol{L}}}
\def\bsE{{\boldsymbol{\sE}}}
\def\bsT{{\boldsymbol{\fT}}}
\def\field{{F}}
\def\bfU{{\mathbf U}}
\def\wt{{\text{\rm wt}}}
\def\id{{\text{\rm id}}}
\begin{document}
\title{  Quantum Schur superalgebras and  Kazhdan--Lusztig combinatorics }
\author{Jie Du and Hebing Rui}
\address{J.D. School of Mathematics and Statistics,
The University of New South Wales, Sydney NSW 2052, Australia}
\email{j.du@unsw.edu.au}
\address{H.R. Department of Mathematics,  East China Normal
University, Shanghai, 200062, China} \email{hbrui@math.ecnu.edu.cn}
\thanks{Partially supported by ARC and National Natural Science Foundation of China (NSFC)}

\date{\today}

\begin{abstract}
We introduce the notion of quantum Schur (or $q$-Schur) superalgebras. These
algebras share certain nice properties with $q$-Schur algebras such as base change property, existence of canonical $\mathbb Z[v,v^{-1}]$-bases, and
the duality relation with quantum matrix superalgebra $\sA(m|n)$.  We also construct
a cellular $\mathbb Q(\up)$-basis and determine its associated cells, called super-cells,
in terms of a Robinson--Schensted--Knuth super-correspondence.
In this way, we classify all irreducible representations over $\mathbb Q(\up)$ via super-cell modules.
\end{abstract}

 \sloppy \maketitle

\section{Introduction}

The quantum Schur (or $q$-Schur) algebra is the key ingredient of a so-called quantum Schur--Weyl theory.
This theory investigates a three-level duality relation which includes: (1) quantum Schur-Weyl reciprocity for
the quantum enveloping algebra $\mathbf U(\mathfrak{gl}_n)$ and Hecke algebras $\boldsymbol\sH(\fS_r)$
via the tensor space $V_n^{\otimes r}$ --- a $\mathbf U(\mathfrak{gl}_n)$-$\boldsymbol\sH(\fS_r)$-bimodule;
(The algebras $\boldsymbol\sS(n,r):=\text{End}_{\boldsymbol\sH}(V_n^{\otimes r})$ are homomorphic images of
$\mathbf U(\mathfrak{gl}_n)$ and are called quantum Schur algebras.)
(2) certain category equivalences between categories of $\boldsymbol\sH$-modules and $\boldsymbol\sS(n,r)$-modules; (3) the realization and presentation problems in which quantum $\mathfrak{gl}_n$ is reconstructed via quantum Schur algebras as a vector space together with certain explicit multiplication formulas on basis elements, and quantum Schur algebras are presented by generators and relations.
We refer the reader to Parts 3 and 5 of \cite{DDPW} and the reference therein for a full account of the quantum Schur--Weyl theory, and to \cite{DDF} for the affine version of the theory.
Naturally, one expects a super version of the quantum Schur--Weyl theory.

Schur superalgebras and their quantum analogue have been investigated
in the context of (quantum) general linear Lie superalgebras or
supergroups; see, e.g., \cite{Muir}, \cite{BKu}, \cite{Don01}, \cite{Mit}.
For example,
Mitsuhashi \cite{Mit} has established (for a generic $q$) the super version of quantum Schur-Weyl reciprocity, and Brundan and Kujawa have investigated representations for Schur superalgebras and provided
a surprising application to the proof of Mullineux conjecture. Thus, like quantum Schur algebras, quantum Schur  superalgebras will play a decisive role in a super-version of the quantum Schur--Weyl theory.

  In this paper, we will investigate quantum Schur superalgebras in the
context of Hecke algebras and Kazhdan--Lusztig combinatorics. We
will first define a quantum Schur superalgebra as the endomorphism
superalgebra of certain signed $q$-permutation modules for Hecke
algebras of type $A$.  By introducing standard and canonical bases,
we establish a cell theory for quantum Schur superalgebras. Thus, a
super version of Robinson-Schensted-Knuth correspondence is
developed to get the cell decomposition and the classification of
(ordinary) irreducible representations.

We organize the paper as follows. After a brief review of Hecke algebras
and their Kazhdan--Lusztig combinatorics,  we discuss, as preparation,
some combinatorial facts, including a description of super-representatives
of double cosets and the Robinson--Schensted--Knuth (RSK) super-correspondence.
We introduce in \S5 the notion of quantum Schur superalgebra by using the $q$-analogues
 of the modules $M^{\la,\mu}$ given in \cite[1.2]{Ser} and prove that this is the same
 algebra as given in \cite{Mit}
defined by the tensor superspace. We construct an integral standard basis which is used to
 construct a Kazhdan--Lusztig type (or canonical) basis in \S6. In particular, we establish
  the base change property. In order to understand its representations over the field
   $\mathbb Q(\up)$, we further introduce another basis, a cellular type basis, over the field
   via the Kazhdan--Lusztig basis of the Hecke algebra. Thus, cell relations can be introduced and
cells modules form a complete set of non-isomorphic irreducible
modules. This result can be considered as a generalization of
Theorem~1.4 in \cite{KL79} to the super case.  Finally, we prove that quantum Schur
superalgebras are the linear dual of the homogeneous components of
the quantum general linear supergroup introduced by Manin \cite{Manin}.

{\it Throughout the paper, we make the following notational convention. }

Let $m,n$ be
nonnegative integers, not both zero. Let $\mathbb Z_2=\mathbb
Z/2\mathbb Z$ be the set of integers modulo 2. Fix the map
\begin{equation}\label{hatmap}
\hat{\ }:\{1,2,\ldots,m,m+1,\ldots, m+n\}\to\mathbb Z_2
\end{equation}
such that $\hat i=\begin{cases} 0,&\text{ if }1\le i\le m,\\
1,&\text{ if }m+1\le i\le m+n.\end{cases}$

Let $\ZC=\mathbb Z[\up, \up^{-1}]$
be the ring of Laurent polynomials in indeterminate $\up$. If $\sA$ denotes a $\sZ$-algebra,
we shall use the same letter of boldface $\boldsymbol\sA$ to denote the $\mathbb Q(\up)$-algebra
obtained by base change to $\mathbb Q(\up)$. In other words, $\boldsymbol\sA=\sA\otimes \mathbb Q(\up)$.

\vspace{.3cm}

\noindent
{\bf Acknowledgement.} The authors would like to thank Weiqiang Wang for making the reference \cite{Ser} available to us. The paper was written while the first author was taking a sabbatical leave from UNSW. He would like to thank East China Normal University, Universities of Mainz and Virginia
for their hospitality and financial support during the writing of the paper.

\section{Hecke algebras and their Kazhdan--Lusztig combinatorics}
  Assume for the moment that
$\sH=\sH(W)$ is the Hecke algebra associated with a {\it Coxeter system} $(W,S)$. Thus,
$\sH$ is an associative $\ZC$-algebra with basis $\{T_w\}_{w\in W}$
 subject to the relations (where
$\bsq=\up^2$)
\begin{equation}\label{heckea}\begin{cases}
 T_s^2=(\bsq-1)T_s+\bsq, & \text{for $s\in S$;}\\
T_y  T_w=  T_{yw}, &\text{$l(yw)=l(y)+l(w)$,} \\
\end{cases}
\end{equation}
where $l$ is the length function relative to $S$. Clearly, $\sH$ admits an
anti-involution $\tau:\sH\to\sH$ sending $T_w$ to $T_{w^{-1}}$. We
first briefly review the construction of the canonical (or
Kazhdan--Lusztig) bases of Hecke algebras.

 Let ${}^-: \mathcal H\rightarrow \mathcal H$ be the $\mathbb
Z$-linear  involution on $\mathcal H$ such that $\bar{\up}=\up^{-1}$
and $\bar {T_w}=T_{w^{-1}}^{-1}$. In \cite{KL79}, Kazhdan and
Lusztig showed that, for any $w\in W$, there is a unique element
$C_w\in \mathcal H$ such that $\bar{C_w}=C_w$ and
\begin{equation}\label{Cbasis}
C_w=\up^{-l(w)}\sum_{y\leq w} P_{y, w}(\up^2) T_y\end{equation} where
$\le $ is the Chevalley-Bruhat order on $W$ and $P_{y, w}$ is a
polynomial in $\bsq=\up^2$ with degree less than ${1\over
2}(l(w)-l(y)-1)$ for $y<w$ and $P_{w, w}=1$. Moreover, $\set{C_w\mid
w\in W}$ forms a free $\sZ$-basis of $\mathcal H$.

Let $\iota$ be the involution on $\mathcal H$ defined by setting
$$
\iota(\sum_{w\in W} a_w T_w)=\sum_{w\in W} \epsilon_w
\bar{a_{w}}\up^{-2l(w)} T_w,\,\,
\text{ where }\epsilon_w=(-1)^{l(w)}.$$ Write $B_w=\epsilon_w \iota(C_w)$. Then $
B_w=\sum_{y\le w} \epsilon_y \epsilon_w \up^{l(w)} \up^{-2l(y)} \bar
P_{y, w} T_y$.  Both $\{C_w\}_{ w\in W}$ and $\{B_w\}_{w\in
W}$ are called {\it canonical or Kazhdan-Lusztig bases}\footnote{In
\cite{KL79}, $C_w$ is denoted by $C'_w$, while $B_w$ is denoted by
$C_w$.}  for
$\mathcal H$.

For $x, y\in W$, let $\mu(y, w)$ be the coefficient of
$\bsq^{\frac12(l(w)-l(y)-1)}$ in $P_{y, w}$. The following formulae
are due to Kazhdan and Lusztig~\cite{KL79}.

For any $s\in S$ and $w\in W$,
\begin{equation}\label{leftcsimple}
 C_s C_x=\begin{cases} (\up+\up^{-1}) C_w, & \text{if
 $sw<w$},\\ C_{sw}+\sum_{\substack{y<x, sy<y\\ l(y)\not\equiv l(x) (2)\\}}    \mu(y, w)
 C_y, &\text{if $sw>w$}.\\
 \end{cases}\end{equation}
Here, $\leq$ denote the Bruhat ordering of $W$.

Canonical bases have important applications to representations of
Hecke algebras through the notion of cells.  Following \cite{KL79},
we define preorder $\le_L$ on $W$ by declaring that $x\le_L y$ if
there is a sequence $z_0=x, z_1, \dots, z_k=y$ such that  $C_{z_i} $
appears in the expression of $C_sC_{z_{i+1}}$  with non-zero
coefficient for some $s\in S$. Define $x\le_R y$ by declaring that
$x^{-1}\le_L y^{-1}$. Let $\le_{LR}$ be the preorder generated by
$\le_L $ and $\le_R$. The corresponding equivalence relations are
denoted by $\sim_L, \sim_R$ and $\sim_{LR}$. Call the
equivalence classes of $W$ with respect to $\sim_L, \sim_R$ and
$\sim_{LR}$, respectively,  \textit{left cells, right cells and two-sided
cells} of $W$.

Let
$$\mathcal R(w)=\{s\in S\mid ws<w\}\text{ and }\mathcal L(w)=\mathcal
R(w^{-1}).$$ The following result is well-known. See
\cite[2.4(i)]{KL79}.

\begin{Lemma}\label{LRset} If $w_1\leq_L w_2$, then $\mathcal
R(w_1)\supseteq \mathcal R(w_2)$. Hence, $w_1\sim_L w_2$ implies
$\mathcal R(w_1)=\mathcal R(w_2)$.
\end{Lemma}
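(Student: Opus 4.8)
The plan is to reduce to a single step in the chain defining $\le_L$, and then to play off the \emph{right-handed} analogue of the multiplication formula \eqref{leftcsimple}. Indeed, unfolding the definition of $\le_L$, there is a sequence $z_0=w_1,z_1,\dots,z_k=w_2$ such that, for each $i$, $C_{z_i}$ occurs with nonzero coefficient in $C_{s_i}C_{z_{i+1}}$ for some $s_i\in S$. Since $\supseteq$ is transitive, it suffices to treat the case $k=1$: assuming that $C_{w_1}$ occurs with nonzero coefficient in $C_sC_{w_2}$ for some $s\in S$, I will show $\mathcal R(w_2)\subseteq\mathcal R(w_1)$; the general case then follows by iterating the inclusion $\mathcal R(z_i)\supseteq\mathcal R(z_{i+1})$ along the chain.

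First I would record the right-handed form of \eqref{leftcsimple}, obtained by applying the anti-involution $\tau$ (which fixes $C_s$ and sends $C_w$ to $C_{w^{-1}}$, since $\tau$ commutes with ${}^-$ and $P_{y,w}=P_{y^{-1},w^{-1}}$): for $w\in W$ and $t\in S$, one has $C_wC_t=(\up+\up^{-1})C_w$ if $wt<w$, and $C_wC_t=C_{wt}+\sum_{\,y<w,\ yt<y}\mu(y,w)C_y$ if $wt>w$. Now fix $t\in\mathcal R(w_2)$, so $w_2t<w_2$, and set $\mathcal H^-_t=\bigoplus_{w:\,wt<w}\ZC\,C_w$ and $\mathcal H^+_t=\bigoplus_{w:\,wt>w}\ZC\,C_w$, so that $\mathcal H=\mathcal H^+_t\oplus\mathcal H^-_t$. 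The key observation is that $\mathcal H\,C_t\subseteq\mathcal H^-_t$: for $w$ with $wt<w$ this is immediate from the first case; for $w$ with $wt>w$, the second case expresses $C_wC_t$ as $C_{wt}$ plus a $\ZC$-combination of $C_y$ with $yt<y$, and $C_{wt}\in\mathcal H^-_t$ as well because $(wt)t=w<wt$.

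With this in hand the argument closes quickly. Write $C_sC_{w_2}=\sum_z a_zC_z$ with $a_{w_1}\neq 0$. Since $w_2t<w_2$, the right-handed formula gives $C_{w_2}C_t=(\up+\up^{-1})C_{w_2}$, hence
\[
(\up+\up^{-1})\sum_z a_zC_z=(\up+\up^{-1})C_sC_{w_2}=C_s\bigl(C_{w_2}C_t\bigr)=\bigl(C_sC_{w_2}\bigr)C_t=\sum_z a_zC_zC_t\in\mathcal H^-_t .
\]
Comparing $\mathcal H^+_t$-components on the two sides, and using that $\ZC$ is a domain (so there is no $(\up+\up^{-1})$-torsion), forces $a_z=0$ whenever $zt>z$. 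In particular $a_{w_1}\neq 0$ gives $w_1t<w_1$, i.e.\ $t\in\mathcal R(w_1)$, proving $\mathcal R(w_2)\subseteq\mathcal R(w_1)$. For the final clause, $w_1\sim_L w_2$ means $w_1\le_L w_2$ and $w_2\le_L w_1$, so applying the inclusion in both directions yields $\mathcal R(w_1)=\mathcal R(w_2)$. The only step requiring care is the verification that $\mathcal H\,C_t\subseteq\mathcal H^-_t$ — namely that the right-handed version of \eqref{leftcsimple} is legitimately available and that each term $C_{wt}$, $C_y$ it produces really lands in $\mathcal H^-_t$; the rest is formal bookkeeping.
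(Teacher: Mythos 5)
Your proof is correct. The paper itself gives no proof of this lemma --- it simply cites it as well-known from \cite[2.4(i)]{KL79} --- and what you have written is precisely the standard Kazhdan--Lusztig argument: pass to the right-handed multiplication formula via $\tau$ (using $\tau(C_w)=C_{w^{-1}}$), observe that $\sH C_t$ lies in the span $\sH_t^-$ of the $C_w$ with $wt<w$, and then compare components of $(\up+\up^{-1})C_sC_{w_2}=(C_sC_{w_2})C_t$ across the decomposition $\sH=\sH_t^+\oplus\sH_t^-$, using that $\ZC$ has no $(\up+\up^{-1})$-torsion. The reduction to a single link of the chain defining $\le_L$ and the iteration along the chain are handled correctly, so nothing is missing.
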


Every left cell $\kappa$ defines a {\it left cell module}
$$E^\kappa:=\text{span}\{C_w\mid w\leq_L\kappa\}/\text{span}\{C_w\mid w<_L\kappa\},$$
where $w\leq_L\kappa$ means $w\leq_L y$ for some (equivalently, for all) $y\in\kappa$, and $w<_L y$ means $w\leq_L y$ but
$w\not\sim_Ly$.

It is known from \cite{KL79} that
cells for the symmetric group $\fS_r$, which is a Coxeter group with $S=\{(1,2),(2,3),\ldots,(r-1,r)\}$, are completely determined via
the Robinson--Schensted map and left cell modules form a complete set of  all irreducible
$\sH_{\mathbb Q(\up)}$-modules. We now give a brief description of these facts.


For non-negative integers $N,r$ with $N>0$, a composition $\lambda$
of $r$, denoted by $\lambda\models r$, is a sequence
$(\lambda_1,\lambda_2,\dots, \lambda_N)$ of non-negative integers
$\lambda_i$ with $N$ parts such that $|\lambda|=\sum_{i=1}^N
\lambda_i=r$. If such a sequence decreases weakly, then $\lambda$ is
called a partition of $r$ (with at most $N$ parts), denoted by
$\la\vdash r$.

The Young diagram $Y(\lambda)$ for a partition
$\lambda=(\lambda_1, \lambda_2, \ldots, \la_N)\vdash r$ is a collection of
boxes arranged in left-justified rows with $\lambda_i$ boxes in the
$i$-th row of $Y(\lambda)$.
Thus,
if $\la$ has $N$ parts, we say that $Y(\la)$ has $N$ rows.

 A $\lambda$-{\it tableau}  (or a {\it tableau of shape} $\la$) is obtained by
inserting integers  into boxes of $Y(\lambda)$. If the entries of a tableau $\ss$ are exactly $1,2,\ldots, r$, then $\ss$ is called an
{\it exact} tableau. The symmetric group $\mathfrak S_r$ acts on exact tableaux $\ss$ by permuting its
entries.

Let $\ts^\lambda$
(resp. $\ts_\lambda$) be the $\lambda$-tableau obtained from the
Young diagram $Y(\lambda)$ by inserting $1, 2, \cdots, r$ from left to
right (resp. top to bottom) along successive rows (resp. columns). For
example, for $\lambda=(4,3,1)$,
$$\ts^{\lambda}=\young(1234,567,8),\qquad
\ts_{\lambda}=\young(1468,257,3).$$
Let $R_i^\la$ be the $i$-th of $\ts^\lambda$, and let $\mathfrak S_\la$ be the row stabilizer subgroup
of $\fS_r$. For an exact tableau $\ss$, if $w(\ts^\lambda)=\ss$, write $w=d(\ss)$. Note that $d(\ss)$
is uniquely determined by $\ss$.

A $\lambda$-tableau $\ts$ is {\it row {\rm (resp.,} column$)$
increasing} if the entries in each row (resp. column) of $\ts$
strictly increase from left to right (resp., from top to bottom). It
is well known that there is a bijection between  the set of all row
increasing exact $\lambda$-tableaux and the set $\mathfrak
D_\lambda^{-1}$ of shortest left coset representatives of $\fS_\la$.
(Thus, $\mathfrak D_\nu$ is the set of right $\mathfrak S_\nu$-coset
representatives of minimal length.)  In particular, $\ss$ is a row
increasing exact $\lambda$-tableau  if and only if
 $d(\ss)\in \mathfrak D_\lambda^{-1}$.

For a partition $\lambda$,
an exact $\lambda$-tableau is {\it standard} if it is
both row increasing and column increasing.  Let $\bfT^s(\lambda)$ be the set of all standard
$\lambda$-tableaux.

Standard tableaux are used to describe elements of $\fS_r$ via the well-known \textit {Robinson-Schensted correspondence}.
 This map sets up a bijection
\begin{equation}\label{RSM}
\fS_r\longrightarrow\bigcup_{\la\in\Lambda^+(r)}\bfT^s(\la)\times\bfT^s(\la),\qquad w\RS(P(w),Q(w));
\end{equation}
   see, e.g., \cite[Cor.~8.9]{DDPW}. Here $P(w)$ is a standard tableau obtained by
   applying an insertion algorithm to $(j_1,j_2,\ldots,j_r)$, where $w(i)=j_i$,
   and $Q(w)$ is the recording tableau; see, e.g., \cite[(8.2.5)]{DDPW}. Moreover, we have $Q(w)=P(w^{-1})$.

One of the important applications of the Robinson-Schensted
correspondence is the decomposition of symmetric groups into
Kazhdan--Lusztig cells which are defined above Lemma~2.1. The
following result is given in \cite[Th.~1.4]{KL79}; see \cite[Th.~8.25]{DDPW}
for a purely combinatorial proof.

\begin{thm}\label{RS} Suppose $x, y\in \mathfrak S_r$. Then
\begin{enumerate} \item[(1)] $x\sim_{L} y$ if and only  if $Q(x)=Q(y)$.
\item[(2)] $x\sim_{R} y$ if and only if $P(x)=P(y)$.
\item[(3)] $x\sim_{LR} y$ if and only if  $P(x)$ and $P(y)$ have the same shape.\end{enumerate}
Moreover, let $\kappa_\la$ denote the left cell containing the
longest element $w_{0,\la}$ of $\fS_\la$ and
$S_\la:=(E^{\kappa_\la})^*$ the corresponding dual left cell module. Then
$\{S_{\la,\mathbb Q(\up)}\}_{\la\vdash r}$ is a complete set of
non-isomorphic irreducible right $\sH_{\mathbb Q(\up)}$-modules and, for any left cell $\kappa$, $S_{\la,\mathbb Q(\up)}\cong(E_{\mathbb Q(\up)}^{\kappa})^*$ if and only if $\kappa$ lie in the two-sided cell
containing $w_{0,\la}$.
\end{thm}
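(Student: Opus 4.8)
The plan is to reduce all three equivalences, together with the classification, to part~(2), and then to prove (2) by combining Knuth's combinatorial description of the Robinson--Schensted insertion tableau with the Kazhdan--Lusztig star operations. For the reduction, note first that, directly from the definition of $\le_R$, one has $x\sim_L y$ if and only if $x^{-1}\sim_R y^{-1}$; since $Q(w)=P(w^{-1})$ (recorded just after \eqref{RSM}), part~(1) is then equivalent to part~(2). For part~(3) I would use that $\sim_{LR}$ is generated by $\sim_L$ and $\sim_R$, so that (1) and (2) make the shape of $P(w)$ constant on $\sim_{LR}$-classes --- the ``only if'' direction. For ``if'', given $P(x)$ and $P(y)$ of a common shape $\la$, I would choose $w,w'\in\fS_r$ with $P(w)=P(x)$, $P(w')=P(y)$ and $Q(w)=Q(w')$ a fixed standard $\la$-tableau (possible by \eqref{RSM}), so that $x\sim_R w$, $w\sim_L w'$ and $w'\sim_R y$ by (1) and (2); hence $x\sim_{LR}y$.

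To establish the easy half of (2), namely that $P(x)=P(y)$ implies $x\sim_R y$, I would invoke Knuth's theorem: $P(x)=P(y)$ holds exactly when the one-line notations of $x$ and $y$ are linked by a chain of elementary Knuth transformations. Each elementary Knuth transformation is realised by a Kazhdan--Lusztig right star operation with respect to a pair $s_i,s_{i+1}$ of adjacent simple reflections, and such a star operation preserves $\sim_R$. This last statement is the only place the Hecke algebra is needed, and I would derive it from the multiplication rule \eqref{leftcsimple} together with Lemma~\ref{LRset}, as in \cite{KL79}; running the chain of star operations then stays inside a single right cell.

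The reverse half of (2), that $x\sim_R y$ forces $P(x)=P(y)$, is the substantive content and the step I expect to be the main obstacle. From the easy half, each ``$P$-class'' $\{w:P(w)=T\}$ lies in a single right cell, and dually each ``$Q$-class'' in a single left cell; what must still be shown is that no right cell contains two distinct $P$-classes, i.e.\ that the right cells are no coarser than the $P$-classes. I would attack this in one of two ways. Combinatorially, following \cite[Ch.~8]{DDPW}, one analyses how the preorder $\le_R$ interacts with Knuth equivalence --- in particular that the shape of $P(w)$ is monotone along $\le_R$, so that equality of right cells already forces equality of shapes --- and then uses a transitivity property of star operations within a fixed shape to conclude. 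Representation-theoretically, one compares over $\mathbb Q(\up)$ the total rank $r!$ carried by the left-cell filtration of the regular module with the dimensions of the $p(r)$ irreducibles of the semisimple algebra $\sH_{\mathbb Q(\up)}$ (here $p(r)$ is the number of partitions of $r$) and with the Robinson--Schensted count $|\{w:\mathrm{shape}\,P(w)=\la\}|=(f^{\la})^{2}$, where $f^{\la}=|\bfT^{s}(\la)|$; this forces each two-sided cell to split into exactly $f^{\la}$ left cells of size $f^{\la}$, and since a left cell meets a right cell in at most one element the $P$-classes can no longer be properly refined.

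Finally, for the classification: over $\mathbb Q(\up)$ the algebra $\sH_{\mathbb Q(\up)}$ is split semisimple with exactly $p(r)$ isomorphism classes of irreducibles. The modules $S_{\la,\mathbb Q(\up)}$, $\la\vdash r$, are nonzero (of dimension $f^{\la}$) and, by (3), pairwise non-isomorphic, since the two-sided cell of $w_{0,\la}$ --- equivalently the $\sim_{LR}$-invariant shape attached to $E^{\kappa_\la}$ --- depends on $\la$; being $p(r)$ in number they exhaust the irreducibles. Parts (1)--(3) moreover show that two left cells carry isomorphic cell modules precisely when they lie in the same two-sided cell, so $S_{\la,\mathbb Q(\up)}\cong(E_{\mathbb Q(\up)}^{\kappa})^{*}$ exactly when $\kappa$ lies in the two-sided cell of $w_{0,\la}$. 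To summarise, everything is formal once the star-operation invariance of cells is in hand, and the crux is the reverse half of (2).
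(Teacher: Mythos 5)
First, note that the paper offers no proof of Theorem~\ref{RS}: it is quoted from \cite[Th.~1.4]{KL79}, with \cite[Th.~8.25]{DDPW} cited for a purely combinatorial proof. Your reductions of (1) and (3) to (2), the identification of elementary Knuth transformations with Kazhdan--Lusztig star operations, and the deduction of the easy half of (2) from \eqref{leftcsimple} and Lemma~\ref{LRset} follow exactly the route of the cited \cite[Ch.~8]{DDPW}, and that part of the outline is sound; the classification statement also goes through as you describe once (1)--(3) and the irreducibility of the left cell modules are established.

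Two points, however, are genuine gaps rather than omitted routine. First, your second route to the reverse half of (2) is circular: the assertion that a left cell meets a right cell in at most one element is, via the bijectivity of \eqref{RSM}, essentially equivalent to the identities ``left cells $=$ $Q$-classes, right cells $=$ $P$-classes'' that you are trying to prove. The counting $\sum_{\la}|\bfT^s(\la)|^2=r!$ only shows that left cells are unions of $Q$-classes whose module-theoretic multiplicities add up correctly; it does not by itself prevent a right cell from swallowing two distinct $P$-classes of the same shape. Your first route is the correct one, but the clause ``a transitivity property of star operations within a fixed shape'' is precisely the nontrivial combinatorial core of \cite[Th.~8.25]{DDPW} (equivalently, that Vogan's generalized $\tau$-invariant separates dual Knuth classes), and it is named rather than proved. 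Second, in the ``only if'' direction of (3), $\sim_{LR}$ is the equivalence induced by the preorder $\le_{LR}$, not the equivalence relation generated by $\sim_L$ and $\sim_R$: a chain witnessing $x\sim_{LR}y$ consists of steps $z_i\le_L z_{i+1}$ or $z_i\le_R z_{i+1}$, which need not individually be $\sim_L$- or $\sim_R$-steps, so (1) and (2) alone do not make the shape constant on $\sim_{LR}$-classes. You need either the monotonicity of the shape of $P$ along $\le_{LR}$ (the same monotonicity you invoke for $\le_R$), or the fact, quoted in \S7 of the paper from \cite[Cor.~6.3(c)]{Lcell1}, that $y\le_L w$ together with $y\sim_{LR}w$ forces $y\sim_L w$. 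With either of these supplied, the argument closes.
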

This result has a natural generalization to quantum Schur algebras; see \cite[(5.3.3)]{DR98}. 
We will develop a super version of this result in \S7.

\section{Super-representatives of double cosets}

 Let
$M(m+n,r)$ be the set of $(m+n)\times (m+n)$ matrices $A=(a_{i,j})$
with $a_{i,j}\in\mathbb N$ and $\sum a_{ij}=r$, and let
$M(m+n)=\cup_{r\geq0}M(m+n, r)$. Let
$${\ro(A)=(\sum_ja_{1,j},\sum_ja_{2,j},\ldots,\sum_ja_{n+m,j})}\atop
{\co(A)=(\sum_ja_{j,1},\sum_ja_{j,2},\ldots,\sum_ja_{j,n+m})}.$$
Define
\begin{equation}
\aligned
 M(m|n, r)&=\{(a_{ij})\in M(m+n, r)\colon a_{ij}\in\{0,1\} \text{ if
$\hat i+\hat j=1$}\},\\
 M(m|n)&=\bigcup_{r\geq0}M(m|n, r). \\
 \endaligned
\end{equation}

 Let $\Lambda(N, r)$ (resp. $\Lambda^+(N, r)$) be the
set of compositions (resp. partitions) of $r$ with $N$ parts. We also write
$\Lambda^+(r)$ for $\Lambda^+(r, r)$, the set of partitions of $r$,
and write $0$ for the unique element in $\Lambda(N,0)$.

For $(\lambda,\mu)\in\Lambda(m,r_1)\times
\Lambda(n,r_2)$, let
$$\lambda\vee\mu=(\lambda_1,\ldots,\la_m,\mu_1,\ldots,\mu_n)\in\Lambda(m+n,r_1+r_2).$$
Every element in $\Lambda(m+n,r)$ has the form $\la\vee\mu$ for some $(\lambda,\mu)\in\Lambda(m,r_1)\times
\Lambda(n,r_2)$ with $r_1+r_2=r$.
Let
\begin{equation}\label{La(m|nr)}
\aligned
\Lambda(m|n,r)&=\{\la|\mu:\la\in\Lambda(m,r_1),\mu\in\Lambda(n,r_2),\la\vee\mu\in\Lambda(m+n,r)\}\\
\Lambda^+(m|n,r)&=\{\la|\mu\in \Lambda(m|n,r):\la_1\geq\cdots\geq\la_m,\mu_1\geq\cdots\geq\mu_n\}.\endaligned
\end{equation}
Thus, we may identify $\Lambda(m|n,r)$ with
$\Lambda(m+n,r)$ via the map $\lambda|\mu\mapsto\la\vee\mu$.  Hence,
$$\fS_{\la|\mu}:=\fS_{\la\vee\mu}\cong
\fS_\la\times\fS_\mu$$
is well-defined. We will write $x|y\in\fS_{\la|\mu}$ to mean that
$x\in\fS_{\la^*}$ and $y\in\fS_{{}^*\!\mu}$, where
$$\la^*=\la\vee(1^{r-|\la|})\quad\text{ and }\quad{}^*\!\mu=(1^{r-|\mu|})\vee\mu.$$
In this notation, $\fS_{\la|\mu}=\fS_{\la^*}\fS_{{}^*\mu}.$ We will also write,
for any $A\in M(m|n,r)$,
$\ro(A)=\la|\mu$ or $\co(A)=\xi|\eta$ as elements in $\Lambda(m|n,r)$.


For notational simplicity, we will identify $\nu$ with the set
$\fS_\nu\cap S$. Let $\mathfrak D_\nu$ (resp. $\mathfrak D^+_\nu$ )
be the set of right $\mathfrak S_\nu$-coset representatives of
minimal (resp., maximal) length. Thus, for $\rho\models r$, the set
$$\mathfrak D_{\nu,\rho}=\mathfrak D_\nu\cap
\mathfrak D_\rho^{-1}\quad (\text{resp}., \mathfrak
D_{\nu,\rho}^+=\mathfrak D^+_\nu\cap (\mathfrak D_\rho^+)^{-1})
$$ consists of minimal (resp. maximal) double coset
representatives of double cosets in $\mathfrak S_\nu\backslash
\mathfrak S_r/\mathfrak S_\rho$. In particular, for
$\lambda|\mu\in\Lambda(m|n,r)$,  $\mathfrak D_{\lambda|\mu}$, $\mathfrak
D^+_{\lambda|\mu}$ and $\mathfrak D_{\lambda|\mu, \xi|\eta}$ are
defined.

For $\lambda|\mu\in\Lambda(m|n,r)$ and
$\xi|\eta\in\Lambda(m'|n',r)$, define
$$\fD_{\la|\mu}^{+,-}=\fD_{\la^*}^+\cap\fD_{{}^*\mu}\quad\text{ resp. }\quad
\fD_{\la|\mu}^{-,+}=\fD_{\la^*}\cap\fD_{{}^*\mu}^+,$$
and
$$\fD_{\la|\mu, \xi|\eta}^{+, -}=\fD_{\la|\mu}^{+, -}\cap (\fD_{\xi|\eta}^{+, -})^{-1}
\quad\text{resp. }\quad \fD_{\la|\mu, \xi|\eta}^{-,+}=\fD_{\la|\mu}^{-,+}\cap
 (\fD_{\xi|\eta}^{-,+})^{-1}.$$
It is clear that we have
\begin{equation}\label{double-sign}
\aligned
 \fD_{\la|\mu, \xi|\eta}^{+, -}&= \mathfrak
D_{\la^*,\xi^*}^+\cap \mathfrak D_{^*\mu,{}^*\eta}=\biggl\{x\in\fS_r\colon
{{sx<x,tx>x,\,\,\forall s\in\la,t\in\mu}\atop{xs<x,xt>x,\,\,\forall
s\in\xi,t\in\eta}}\biggr\},\\
\mathfrak  D_{\la|\mu, \xi|\eta}^{-,+}&= \mathfrak D_{\la^*,\xi^*}\cap
\mathfrak D_{^*\mu,{}^*\eta}^+=\biggl\{x\in\fS_r\colon
{{sx>x,tx<x,\,\,\forall s\in\la,t\in\mu}\atop{xs>x,xt<x,\,\,\forall
s\in\xi,t\in\eta}}\biggr\}.\\
\endaligned
\end{equation}
Moreover, if $\emptyset$ denotes the empty subset of $S$ associated with those
$\xi|\eta$ whose components are 0 or 1, then $\fD_{\la|\mu, \emptyset}^{+, -}=\fD_{\la|\mu}^{+, -}$.

The following result links the above sets with certain {\it trivial intersection property}.
\begin{Lemma}\label{TIP}  Let $\la|\mu,\xi|\eta\in\Lambda(m|n,r)$. For any
$d\in\mathfrak D_{\lambda|\mu, \xi|\eta}$, the following are equivalent.
\begin{enumerate}
\item[(1)]  $\mathfrak D_{\la|\mu,  \xi|\eta}^{+, -}\cap \fS_{\la|\mu}
  d \fS_{\xi|\eta}\neq \emptyset$;
\item[(2)] $\mathfrak D_{\la|\mu,  \xi|\eta}^{-,+}\cap \fS_{\la|\mu}
  d \fS_{\xi|\eta}\neq \emptyset$;
\item[(3)] $\fS_{\la^*} \cap d \fS_{^*\eta} d^{-1} =\{1\}$
and $\fS_{{}^*\!\mu}\cap d \fS_{\!\xi^*} d^{-1}=\{1\}$.
\end{enumerate}
Moreover, if one of the conditions holds, then
\begin{itemize}
\item[(1$'$)] $\mathfrak D_{\la|\mu, \xi|\eta}^{+,
-}\cap \fS_{\la|\mu} d \fS_{\xi|\eta}=\fD^+_{\la^*,\xi^*}\cap\fS_{\la^*} d\fS_{\xi^*}=\{d^*\}$;
 \item[(2$'$)] $\mathfrak
D_{\la|\mu, \xi|\eta}^{-,+}\cap \fS_{\la|\mu} d
\fS_{\xi|\eta}=\fD^+_{\mu^*,\eta^*}\cap\fS_{\mu^*} d\fS_{\eta^*}=\{{}^*\!d\}$.
\end{itemize}
\end{Lemma}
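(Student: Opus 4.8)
The plan is to treat the ``+,-'' case in full and then indicate the symmetric ``-,+'' case. The key structural fact is that $\fS_{\la|\mu}=\fS_{\la^*}\fS_{{}^*\mu}$ is an \emph{internal direct product}: $\la^*$ and ${}^*\mu$ are disjoint subsets of $S$ that together generate a group isomorphic to $\fS_{\la^*}\times\fS_{{}^*\mu}$, and every element $w\in\fS_{\la|\mu}$ factors uniquely as $w=w_1w_2$ with $w_1\in\fS_{\la^*}$, $w_2\in\fS_{{}^*\mu}$ and $l(w)=l(w_1)+l(w_2)$. I would first record this, together with the standard double-coset facts: for $d\in\fD_{\la|\mu,\xi|\eta}$ the coset $\fS_{\la|\mu}d\fS_{\xi|\eta}$ contains a unique element of maximal length in $\fD^+_{\la^*,\xi^*}$ precisely when the relevant intersection of parabolics is trivial, and likewise for the ``column'' parabolics. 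The description \eqref{double-sign} then translates membership in $\fD^{+,-}_{\la|\mu,\xi|\eta}$ into the four one-sided length conditions there, so the heart of the matter is to locate, inside a fixed $\fS_{\la|\mu}$-$\fS_{\xi|\eta}$ double coset, an element that simultaneously goes \emph{down} under left/right multiplication by generators in $\la$, $\xi$ and \emph{up} under those in $\mu$, $\eta$.

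Second, I would prove $(3)\Rightarrow(1')$, which also gives $(3)\Rightarrow(1)$. Assume $\fS_{\la^*}\cap d\fS_{{}^*\eta}d^{-1}=\{1\}$ and $\fS_{{}^*\mu}\cap d\fS_{\xi^*}d^{-1}=\{1\}$. The second triviality condition is exactly the hypothesis of the classical maximal-double-coset lemma applied to the pair $(\la^*,\xi^*)$ (after passing through $d$ and using that $\fS_{\la|\mu}$ is generated by $\fS_{\la^*}$ and $\fS_{{}^*\mu}$), so $\fS_{\la^*}d\fS_{\xi^*}$ contains a unique longest element, call it $d^*$, and it lies in $\fD^+_{\la^*,\xi^*}$, i.e. $sd^*<d^*$ for all $s\in\la^*$ and $d^*s<d^*$ for all $s\in\xi^*$; in particular this holds for $s\in\la$ and $s\in\xi$. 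It remains to see $td^*>d^*$ for $t\in\mu$ and $d^*t>d^*$ for $t\in\eta$. Here I would use the first triviality hypothesis together with the deletion/exchange condition: $d^*\in\fS_{\la|\mu}d\fS_{\xi|\eta}$ forces $d^*=a\,d\,b$ with $a\in\fS_{\la|\mu}$, $b\in\fS_{\xi|\eta}$, and the maximality of $d^*$ inside $\fS_{\la^*}d\fS_{\xi^*}$ combined with $\fS_{{}^*\mu}\cap d\fS_{\xi^*}d^{-1}=\{1\}$ and $\fS_{\la^*}\cap d\fS_{{}^*\eta}d^{-1}=\{1\}$ pins $a$ down to lie in $\fS_{\la^*}$ and $b$ in $\fS_{\xi^*}$; then $d^*$ is the $\fD^{+,-}$-representative because multiplying it on the left by $t\in\mu$ (a generator of $\fS_{{}^*\mu}$, which commutes with $\fS_{\la^*}$) cannot cause a length drop without producing a nontrivial element of $\fS_{{}^*\mu}\cap d^*\fS_{\xi^*}(d^*)^{-1}=\fS_{{}^*\mu}\cap d\fS_{\xi^*}d^{-1}$, and symmetrically on the right with $\eta$ and the other hypothesis. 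This proves $d^*\in\fD^{+,-}_{\la|\mu,\xi|\eta}\cap\fS_{\la|\mu}d\fS_{\xi|\eta}$, and the uniqueness built into the classical lemma gives the singleton $\{d^*\}$ in $(1')$.

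Third, for $(1)\Rightarrow(3)$: if some $x\in\fD^{+,-}_{\la|\mu,\xi|\eta}$ lies in $\fS_{\la|\mu}d\fS_{\xi|\eta}$, then $x$ is simultaneously a maximal-length representative for the pair $(\la,\xi)$ of ``down'' generators and a minimal-length representative for the pair $(\mu,\eta)$ of ``up'' generators in its double coset; suppose for contradiction that, say, $\fS_{{}^*\mu}\cap x\fS_{\xi^*}x^{-1}\ne\{1\}$. Pick $1\ne g$ in this intersection; writing $g=xhx^{-1}$ with $1\ne h\in\fS_{\xi^*}$ and decomposing $h=h_\xi h'$ ($h_\xi\in\fS_\xi$, and $h'$ a minimal $\fD_\xi$-representative), a standard exchange argument shows one can modify $x$ within its double coset, using the relation $gx=xh$, to strictly increase or keep its length while violating one of the conditions ``$xs<x$ for $s\in\xi$'' or ``$tx>x$ for $t\in\mu$'', contradicting $x\in\fD^{+,-}$. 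This forces $\fS_{{}^*\mu}\cap x\fS_{\xi^*}x^{-1}=\{1\}$, and since $x$ and $d$ generate the same double coset, conjugating by an element of $\fS_{\la|\mu}\times\fS_{\xi|\eta}$ transports this to $\fS_{{}^*\mu}\cap d\fS_{\xi^*}d^{-1}=\{1\}$; the other equality in $(3)$ follows by the same argument with the roles of left/right and $\mu/\eta$, $\la/\xi$ interchanged. Finally $(1)\Leftrightarrow(2)$ and $(2')$ are obtained by applying the whole argument to the ``reversed'' data --- concretely, by the anti-involution $\tau$ composed with the longest-element conjugation $w\mapsto w_{0,\la|\mu}\,w\,w_{0,\xi|\eta}$, which swaps the role of ``$<$'' and ``$>$'' under the relevant generators and interchanges $\la^*\leftrightarrow{}^*\mu$, $\xi^*\leftrightarrow{}^*\eta$, carrying $\fD^{+,-}$ to $\fD^{-,+}$ and $d^*$ to ${}^*d$.

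The main obstacle I anticipate is the bookkeeping in step two showing that the classical longest element $d^*$ of $\fS_{\la^*}d\fS_{\xi^*}$ is automatically \emph{minimal} with respect to the $\mu$- and $\eta$-generators; this is where the two separate triviality conditions in $(3)$ are genuinely both needed, and making the exchange-condition argument precise --- ruling out a length drop under left multiplication by $t\in\mu$ by exhibiting a nontrivial element of the (assumed trivial) intersection $\fS_{{}^*\mu}\cap d^*\fS_{\xi^*}(d^*)^{-1}$ --- requires care about which parabolic the multiplier lands in. Everything else is an orchestration of the standard minimal/maximal double-coset representative lemmas (e.g. \cite[Ch.~4]{DDPW}) applied separately to the ``row'' parabolics $\fS_{\la^*},\fS_{\xi^*}$ and the ``column'' parabolics $\fS_{{}^*\mu},\fS_{{}^*\eta}$, glued together via the commuting direct-product decomposition of $\fS_{\la|\mu}$ and $\fS_{\xi|\eta}$.
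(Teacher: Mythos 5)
Your proposal is correct and follows essentially the same route as the paper's proof: both arguments reduce everything to the unique longest element $d^*\in\fD^+_{\la^*,\xi^*}$ of the double coset $\fS_{\la^*}d\fS_{\xi^*}$ and translate the trivial-intersection conditions of (3) into the statement that $d^*$ has no left descents at generators in $\mu$ and no right descents at generators in $\eta$ (the paper does this in a single chain of equivalences via the standard fact that $\fS_{{}^*\!\mu}\cap d\fS_{\xi^*}d^{-1}$ is a parabolic subgroup generated by elements $dsd^{-1}$, $s\in\xi$, while you argue the two implications separately by exchange arguments and deduce (2) and (2$'$) from the longest-element symmetry instead of repeating the argument). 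One small imprecision: the existence and uniqueness of the longest element $d^*$ of $\fS_{\la^*}d\fS_{\xi^*}$ holds for every double coset and does not require the triviality hypothesis, which enters only --- exactly as you in fact use it afterwards --- in verifying that $td^*>d^*$ for $t\in\mu$ and $d^*t>d^*$ for $t\in\eta$.
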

\begin{proof}Let $d^*$ be the unique element in
$\fD^+_{\la^*,\xi^*}\cap\fS_{\la^*} d\fS_{\xi^*}$. Thus,
 $\mathfrak D_{\la|\mu,  \xi|\eta}^{+, -}\cap \fS_{\la|\mu}
  d \fS_{\xi|\eta}\neq \emptyset$ is equivalent to the condition $d^*\in\fD_{^*\mu,{}^*\eta}$.
However,
$$\aligned
&\fS_{\la^*} \cap d \fS_{^*\eta} d^{-1} \not=\{1\}\quad(\text{resp.,
}\fS_{{}^*\!\mu}\cap
d \fS_{\xi^*\!} d^{-1}\not=\{1\})\\
\iff&\exists s\in\eta\text{ satisfying }t=dsd^{-1}\in\la\quad
(\text{resp., } \exists s\in\xi\text{ satisfying }t=dsd^{-1}\in\mu)\\
\iff&d^*s<d^*\quad(\text{resp., } td^*<d^*) \\
\iff&d^*\not\in\fD_{^*\mu,{}^*\eta}.\\
\endaligned$$
So (1) and (3) are equivalent. A similar argument shows that (2) is equivalent to the conditions $d^{-1}\fS_{\la^*}d\cap \fS_{^*\eta} =\{1\}$ and $ d^{-1}\fS_{{}^*\!\mu}d\cap \fS_{\!\xi^*}=\{1\}$. Hence,
(2) and (3) are equivalent. The last assertion follows from definition.
\end{proof}

It is well-known that double cosets of the symmetric group can be
described in terms of matrices. More precisely, 
there is a bijection
\begin{equation}\label{jmath}
\jmath:\fJ(N,r):=\{(\nu,w,\rho)\mid
\nu,\rho\in\Lambda(N,r),w\in\fD_{\nu,\rho}\}\lra M(N,r)
\end{equation} such
that if $\jmath(\nu,w,\rho)=A=(a_{i,j})$ then
$a_{i,j}=|R_i^\nu\cap wR_j^\rho|$, where $R_k^\la$ is the $k$-th row of $\ts^\la$. In other words, for
$\la=(\la_1,\la_2,\ldots,\la_N)$ and $1\le k\le N$,
$$R_k^\la=\{\la_1+\cdots+\la_{k-1}+1,\la_1+\cdots+\la_{k-1}+2,\ldots,
\la_1+\cdots+\la_{k-1}+\la_k\}.$$
Moreover, if $\jmath(\nu,w,\rho)=A$, then $\jmath(\rho,w^{-1},\nu)=A^t$, the transpose of $A$.
We now describe a ``super'' version of $\jmath$.
\begin{Prop}\label{bij}
Let
$$\fJ(m|n,r)=\bigcup_{\lambda|\mu, \xi|\eta\in
\Lambda(m|n, r)}\{(\la|\mu,d,\xi|\eta)\colon
d\in\mathfrak D_{\lambda|\mu, \xi|\eta},
  \mathfrak D_{\la|\mu,  \xi|\eta}^{+, -}\cap \fS_{\la|\mu}
  d \fS_{\xi|\eta}\neq \emptyset\}.$$ By restriction, the map $\jmath$ given in \eqref{jmath} induces a bijection
\begin{equation}\label{jmath--}
\jmath:\fJ(m|n,r)\longrightarrow M(m|n,r).
\end{equation}
\end{Prop}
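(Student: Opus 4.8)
The plan is to show that the restriction of $\jmath$ to $\fJ(m|n,r)$ lands in $M(m|n,r)$, is injective, and is surjective. Injectivity is free: $\jmath$ in \eqref{jmath} is already a bijection from $\fJ(N,r)$ to $M(N,r)$ with $N=m+n$, and under the identification $\Lambda(m|n,r)=\Lambda(m+n,r)$ a triple $(\la|\mu,d,\xi|\eta)$ in $\fJ(m|n,r)$ is in particular a triple $(\nu,w,\rho)$ in $\fJ(m+n,r)$ (with $\nu=\la\vee\mu$, $\rho=\xi\vee\eta$, $w=d$, since $\fD_{\la|\mu,\xi|\eta}=\fD_{\nu,\rho}$ by definition). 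So I only need to identify the image.

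First I would show $\jmath(\fJ(m|n,r))\subseteq M(m|n,r)$. Fix $(\la|\mu,d,\xi|\eta)\in\fJ(m|n,r)$ and write $A=\jmath(\la|\mu,d,\xi|\eta)$, so $a_{i,j}=|R_i^{\nu}\cap dR_j^{\rho}|$ where $\nu=\la\vee\mu$, $\rho=\xi\vee\eta$. By the defining nonemptiness condition and Lemma~\ref{TIP}, condition (3) holds, i.e.\ $\fS_{\la^*}\cap d\fS_{{}^*\eta}d^{-1}=\{1\}$ and $\fS_{{}^*\mu}\cap d\fS_{\xi^*}d^{-1}=\{1\}$. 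The point is that the block of $A$ with $\hat i=0,\hat j=1$ records the intersections of rows of $\ts^\la$ (inside $\fS_{\la^*}$) with the $d$-translates of rows of $\ts^\eta$ (inside $d\fS_{{}^*\eta}d^{-1}$); if some $a_{i,j}\ge 2$ for such $i,j$, then $R_i^\nu\cap dR_j^\rho$ contains two elements $p<q$, and the transposition of $p$ and $q$ in the Young subgroup of the relevant row would lie in $\fS_{\la^*}\cap d\fS_{{}^*\eta}d^{-1}$, contradicting triviality of that intersection. (More carefully: $|R_i^\nu\cap dR_j^\rho|\ge 2$ means the parabolic $\fS_{R_i^\nu}\cap d\fS_{R_j^\rho}d^{-1}$ is nontrivial, and this parabolic sits inside $\fS_{\la^*}\cap d\fS_{{}^*\eta}d^{-1}$.) Symmetrically, using the second equality in Lemma~\ref{TIP}(3), one gets $a_{i,j}\in\{0,1\}$ when $\hat i=1,\hat j=0$. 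Hence $a_{i,j}\in\{0,1\}$ whenever $\hat i+\hat j=1$, so $A\in M(m|n,r)$.

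For surjectivity, start with any $A=(a_{i,j})\in M(m|n,r)$ and let $(\nu,w,\rho)=\jmath^{-1}(A)$ with $\nu,\rho\in\Lambda(m+n,r)$. Write $\nu=\la\vee\mu$, $\rho=\xi\vee\eta$ with $\la|\mu,\xi|\eta\in\Lambda(m|n,r)$, and $d:=w\in\fD_{\nu,\rho}=\fD_{\la|\mu,\xi|\eta}$. I must verify the nonemptiness condition defining $\fJ(m|n,r)$; by Lemma~\ref{TIP} it suffices to check condition (3). Suppose $\fS_{\la^*}\cap d\fS_{{}^*\eta}d^{-1}\ne\{1\}$. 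Since $\fS_{\la^*}=\prod_{1\le i\le m}\fS_{R_i^\nu}$ (the remaining rows being singletons) and $d\fS_{{}^*\eta}d^{-1}=\prod_{m+1\le j\le m+n}\fS_{dR_j^\rho}$, a nontrivial common element forces, for some $i\le m$ and $j\ge m+1$, the intersection $\fS_{R_i^\nu}\cap \fS_{dR_j^\rho}$ to be nontrivial, i.e.\ $|R_i^\nu\cap dR_j^\rho|=a_{i,j}\ge 2$ — contradicting $A\in M(m|n,r)$ since then $\hat i=0,\hat j=1$. The other intersection in (3) is handled the same way with the roles of $\la$ and $\mu$, $\xi$ and $\eta$ swapped (now $\hat i=1,\hat j=0$). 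Thus (3) holds, so $(\la|\mu,d,\xi|\eta)\in\fJ(m|n,r)$ and $\jmath$ of it is $A$. Combined with injectivity from the ambient bijection, this proves \eqref{jmath--}.

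The main obstacle is the bookkeeping in the surjectivity/containment step: one has to be confident that $\fS_{\la^*}$ decomposes as the product of the row subgroups $\fS_{R_i^\nu}$ for $i\le m$ together with trivial factors, that $d\fS_{{}^*\eta}d^{-1}$ decomposes as $\prod_{j>m}\fS_{dR_j^\rho}$, and — crucially — that a nontrivial element of the intersection of two such "block-diagonal" parabolics must already be nontrivial in a single matching pair of blocks $(\fS_{R_i^\nu},\fS_{dR_j^\rho})$. This last point is the standard fact that an intersection of Young subgroups is again a Young subgroup on the common refinement of the two set-partitions, so it is nontrivial iff some common block has size $\ge 2$; identifying that block size with $a_{i,j}=|R_i^\nu\cap dR_j^\rho|$ and matching parities $\hat i,\hat j$ is exactly where the definition of $M(m|n,r)$ is used. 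Everything else is a direct consequence of Lemma~\ref{TIP} and the classical matrix description \eqref{jmath}.
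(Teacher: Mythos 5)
Your proof is correct and follows essentially the same route as the paper's: both reduce the membership condition to Lemma~\ref{TIP}(3) and then translate the trivial-intersection property into the statement that the off-diagonal block entries $a_{i,j}=|R_i^\nu\cap dR_j^\rho|$ (with $\hat i+\hat j=1$) are at most $1$, with injectivity and well-definedness inherited from the ambient bijection $\jmath$ on $\fJ(m+n,r)$. The only difference is cosmetic: the paper asserts the equivalence ``trivial intersection $\iff$ all such $a_{i,j}\le 1$'' in one line, whereas you justify it via the standard fact that an intersection of Young subgroups is the Young subgroup of the common refinement.
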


\begin{proof} 
For $w,y\in \fS_r$,  it is well-known that
$$a_{ij}:=|R_i^{\lambda\vee\mu}\cap wR_j^{\xi\vee\eta}|=
|R_i^{\lambda\vee\mu}\cap yR_j^{\xi\vee\eta}|$$ whenever $\mathfrak
S_{\lambda|\mu}w\mathfrak S_{\xi|\eta}=\mathfrak
S_{\lambda|\mu}y\mathfrak S_{\xi|\eta}$.

For $\la|\mu\in\Lambda(m|n,r)$, if we put
 $R_i^\lambda=R_i^{\lambda\vee\mu}$ for $1\le i\le m$ and
$R_j^\mu=R_{m+j}^{\lambda\vee\mu}$ for $1\le j\le n$, then
$$a_{ij}=\begin{cases} | R_{i}^\lambda \cap
w R_j^\xi|, &\text{if $i\le m, j\le m$,}\\
| R_{i}^\lambda \cap w R_{j-m}^\eta|, &\text{if $i\le m, j\ge m+1$,}\\
| R_{i-m}^\mu \cap
w R_j^\xi|, &\text{if $i\ge m+1, j\le m$,}\\
| R_{i-m}^\mu  \cap w
R_{j-m}^\eta|, &\text{if $i\ge m+1, j\ge m+1$.}\\
\end{cases}
$$
Now,  $w\in\mathfrak D_{\lambda|\mu, \xi|\eta}^{+, -}$ if and only if both
$\fS_{\la^*} \cap w \fS_{^*\eta} w^{-1} =\{1\}$
and $\fS_{{}^*\!\mu}\cap w\fS_{\xi^*} w^{-1}=\{1\}$. This is equivalent to
$|R_{i}^\lambda \cap w R_{j-m}^\eta|\leq1$ and $| R_{i-m}^\mu \cap
w R_j^\xi|\leq1$ for all $1\le i\le m, m+1\le j\le m+n$, or $m+1\le i\le m+n,1\le j\le m$.
Hence, regarding $\fJ(m|n,r)$ as a subset of $\fJ(m+n,r)$, $\jmath$ sends $\fJ(m|n,r)$ into $M(m|n,r)$.
So the restriction is well-defined. The bijectivity follows from that of $\jmath$ and the argument above. (One may also use
Proposition \ref{alg} below to see the surjectivity.)
\end{proof}

 Let
$$\fJ(m|n,r)^{+,-}=\bigcup_{\lambda|\mu, \xi|\eta\in
\Lambda(m|n, r)}\{(\la|\mu,w,\xi|\eta)\colon w\in \mathfrak
D_{\lambda|\mu, \xi|\eta}^{+, -}\},$$ and define $\fJ({m|n,r})^{-,+}$
similarly. The following can be seen easily from Lemma \ref{TIP} and Proposition \ref{bij}.

\begin{Cor}\label{jmath+-} There are bijections
\begin{equation}\label{sjmath}
\jmath^{+,-}:\fJ(m|n,r)^{+,-}\lra M(m|n,r)\quad\text{\rm and}\quad
\jmath^{-,+}:\fJ({m|n,r})^{-,+}\lra M(m|n,r)
\end{equation}
 such that, if
$A=\jmath^{+,-}(\la|\mu,w,\xi|\eta)=\jmath^{-,+}(\la|\mu,w',\xi|\eta)$,
then $\ro(A)=\la|\mu$, $\co(A)=\xi|\eta$.
\end{Cor}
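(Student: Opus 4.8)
The plan is to deduce Corollary~\ref{jmath+-} by combining the bijection $\jmath$ of Proposition~\ref{bij} with the ``replacement'' statements~(1$'$) and (2$'$) of Lemma~\ref{TIP}. The point is that $\fJ(m|n,r)^{+,-}$ and $\fJ(m|n,r)^{-,+}$ are obtained from the indexing set $\fJ(m|n,r)$ of Proposition~\ref{bij} by replacing each distinguished double-coset representative $d\in\fD_{\la|\mu,\xi|\eta}$ (with the trivial-intersection condition) by the element $d^{*}$ of $\fD_{\la|\mu,\xi|\eta}^{+,-}$, resp.\ ${}^{*}d$ of $\fD_{\la|\mu,\xi|\eta}^{-,+}$, lying in the same double coset $\fS_{\la|\mu}d\fS_{\xi|\eta}$.

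First I would define the maps. Given $(\la|\mu,w,\xi|\eta)\in\fJ(m|n,r)^{+,-}$, so that $w\in\fD_{\la|\mu,\xi|\eta}^{+,-}$, the double coset $\fS_{\la|\mu}w\fS_{\xi|\eta}$ contains a unique element $d\in\fD_{\la|\mu,\xi|\eta}$, and since $\fD_{\la|\mu,\xi|\eta}^{+,-}\cap\fS_{\la|\mu}w\fS_{\xi|\eta}\ni w$ is nonempty, condition~(1) of Lemma~\ref{TIP} holds for this $d$; hence $(\la|\mu,d,\xi|\eta)\in\fJ(m|n,r)$, and by~(1$'$) we have $w=d^{*}$, so $w$ is the \emph{unique} element of $\fD_{\la|\mu,\xi|\eta}^{+,-}$ in that double coset. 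I then set $\jmath^{+,-}(\la|\mu,w,\xi|\eta):=\jmath(\la|\mu,d,\xi|\eta)\in M(m|n,r)$. Conversely, starting from $A\in M(m|n,r)$, Proposition~\ref{bij} gives a unique $(\la|\mu,d,\xi|\eta)\in\fJ(m|n,r)$ with $\jmath(\la|\mu,d,\xi|\eta)=A$; condition~(1) holds by construction, so~(1$'$) produces a unique $d^{*}\in\fD_{\la|\mu,\xi|\eta}^{+,-}$ in $\fS_{\la|\mu}d\fS_{\xi|\eta}$, and $(\la|\mu,d^{*},\xi|\eta)\in\fJ(m|n,r)^{+,-}$. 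These two assignments are mutually inverse: both $\jmath^{+,-}$ and its candidate inverse factor through the bijection $\jmath$ of Proposition~\ref{bij} via the bijection $d\leftrightarrow d^{*}$ between $\{d:(\la|\mu,d,\xi|\eta)\in\fJ(m|n,r)\}$ and $\fD_{\la|\mu,\xi|\eta}^{+,-}$ furnished by~(1$'$) (well-definedness in the forward direction, uniqueness in the backward direction). The argument for $\jmath^{-,+}$ is identical, using the equivalence of conditions~(2) and~(3) in Lemma~\ref{TIP} together with~(2$'$). Finally, the row/column sum statement is immediate: $\ro$ and $\co$ are constant on the double coset $\fS_{\la|\mu}d\fS_{\xi|\eta}$, so $\ro(A)=\ro(\jmath(\la|\mu,d,\xi|\eta))=\la|\mu$ and $\co(A)=\xi|\eta$ follow from the corresponding property of $\jmath$ recalled before~\eqref{jmath}.

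The only genuine content beyond bookkeeping is the compatibility of the three parametrizations of a fixed double coset --- by its minimal representative $d$, by $d^{*}$, and by ${}^{*}d$ --- and that is precisely what parts~(1$'$) and~(2$'$) of Lemma~\ref{TIP} supply, conditional on the trivial-intersection hypothesis, which in turn is built into the definition of $\fJ(m|n,r)$ via Proposition~\ref{bij}. So I expect no real obstacle; the mild care needed is to keep straight which of the (equivalent) conditions (1), (2), (3) of Lemma~\ref{TIP} is being invoked in each of the two cases, and to note that the nonemptiness condition in the definition of $\fJ(m|n,r)^{+,-}$ is automatic once $w$ is required to lie in $\fD_{\la|\mu,\xi|\eta}^{+,-}$, so no extra hypothesis is lost in passing between the two indexing sets.
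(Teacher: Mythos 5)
Your argument is correct and is exactly the deduction the paper intends: the paper gives no details beyond asserting that the corollary ``can be seen easily from Lemma~\ref{TIP} and Proposition~\ref{bij}'', and you have spelled out precisely that route, namely composing $\jmath$ with the bijection $d\leftrightarrow d^{*}$ (resp.\ $d\leftrightarrow{}^{*}d$) on each double coset furnished by parts (1$'$) and (2$'$) of Lemma~\ref{TIP}. Nothing further is needed.
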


The map $\jmath^{+,-}$ will be used to introduce cell relations on $M(m|n,r)$ in \S7.

The map $\jmath$ can be used to explicitly describe the shortest and longest elements in the double coset
corresponding to a matrix $A\in M(N,r)$.
Write $w_A^-$ for $w$ if $\jmath(\nu,w,\rho)=A$. Then $w_A^-$ is the shortest element
 in the double coset $\fS_\nu w\fS_\rho$, Let $w_A^+$ be the longest element
in $\fS_\nu w_A^-\fS_\rho$.  By \cite{Du96} (or \cite[Exer.
8.2]{DDPW}, $w_A^-$ (resp. $w_A^+$) can be computed as follows:
construct a pseudo-matrix $A_-$ associated with $A$ by replacing
$a_{1,1}$ by the sequence consisting of the first $a_{1,1}$ integers
of $\{1,2,\ldots,r\}$, $a_{1,2}$ by the sequence of the next
$a_{1,2}$ integers, etc., from left to right down successive rows,
and then form the permutation $w_A^-$ which is obtained by reading
$A_-$ from left to right inside the sequences and from top to
bottom, and followed by left to right along successive columns.
\begin{example}\label{AAA}
$\text{ If } \,\,A=\begin{pmatrix} 2 & 0 & 1\\
1 & 2 & 0\\
1 & 2 & 1\\
\end{pmatrix},\,\,\text{ then }\,\, A_-=\begin{pmatrix}  (1,2) &  \emptyset & 3\\
                                  4 & (5,6)& \emptyset\\
                                  7 & (8, 9) & 10\\
                                  \end{pmatrix}
$ and $$w_A^-=(1,2,4,7,5, 6, 8, 9, 3,10).$$
\end{example}
By reversing the integers in each row of $A_-$ and form a
pseudo-matrix $A_+$, the permutation $w_A^+$ is obtained by reading
$A_+$ from left to right inside the sequences and from bottom to
top, and followed by left to right along successive columns.
For the example above, we have
$$A_+=\begin{pmatrix}  (3,2) &  \emptyset & 1\\
                                  6 & (5,4)& \emptyset\\
                                  10 & (9, 8) & 7\\
                                  \end{pmatrix}\,\,\,\text{ and
                                  }\,\,\,
                                  w_A^+=(10,6,3,2,9,8,5,4,7,1)
                                  $$

We now generalize this construction to the elements in
$\fJ(m|n,r)^{+,-}$ and $\fJ({m|n,r})^{-,+}$. Write $w_A^{+,-}$ for
$w$ if $\jmath^{+,-}(\nu,w,\rho)=A$ and $w_A^{-,+}$ for $w'$ if
$\jmath^{-,+}(\nu,w',\rho)=A$. Suppose $A=(a_{ij})\in M(m|n, r)$. By
regarding $A$ as an element in $M(m+n, r)$, construct a
pseudo-matrix $A_{+,-}$ (resp., $A_{-,+}$) by reversing the integers
in each row of $A_-$ for the first $m$ rows (resp. last $n$ rows).

Now, define the permutation $w_A^{+,-}$ (resp., $w_A^{-,+}$) by
reading $A_{+,-}$ (resp., $A_{-,+}$) from left to right inside the
sequences and from bottom to top (resp., top to bottom), followed by
left to right along the first $m$ successive columns, and then from
top to bottom (resp., bottom to top) for the next $n$ successive columns.

\begin{example} If $A$ is the matrix as given in Example \ref{AAA},
then $A\in M(m|n,10)$ for $m=1$ and $n=2$, $\ro(A)=\lambda|
\mu=(3)|(3,4)$, $\co(A)=\xi| \eta=(4)|(4,2)$, and
$$A_{+,-}=\begin{pmatrix}  (3,2) &  \emptyset & 1\\
                                  4 & (5,6)& \emptyset\\
                                  7 & (8,9) & 10\\
                                  \end{pmatrix},\quad\,\,
                                  \,\,
A_{-,+}=\begin{pmatrix}  (1,2) &  \emptyset & 3\\
                                  6 & (5,4)& \emptyset\\
                                  10 & (9, 8) & 7\\
                                  \end{pmatrix}$$
Hence, $w_A^{+,-}=(7,4,3,2,5,6,8,9,1,10)$ and
$w_A^{-,+}=(1,2,6,10,9, 8, 5, 4, 7,3)$.
\end{example}

\begin{Prop} \label{alg} Maintain the notation introduced above. If
 $A\in M(m|n, r)$ with
$co(A)=\xi|\eta$ and $ro(A)=\lambda|\mu$ then $w_A^{+,-}\in
\mathfrak D_{\lambda|\mu, \xi|\eta}^{+, -}$ and $w_A^{-,+}\in
\mathfrak D_{\lambda|\mu, \xi|\eta}^{-,+}$.
\end{Prop}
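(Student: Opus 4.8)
The plan is to show that the permutations $w_A^{+,-}$ and $w_A^{-,+}$ constructed by the explicit reading-order recipe satisfy the descent characterizations on the right-hand side of \eqref{double-sign}, and that they lie in the correct double coset $\fS_{\la|\mu}\,w_A^-\,\fS_{\xi|\eta}$ so that the trivial-intersection condition of Proposition \ref{bij} (equivalently Lemma \ref{TIP}(3)) is automatic for $A\in M(m|n,r)$. First I would record the basic fact, already used implicitly in the excerpt, that $w_A^-$ maps the blocks $R_j^\rho$ (columns) bijectively onto the disjoint unions $\bigcup_i(\text{entry set of }a_{ij}\text{ in }A_-)$, and that reversing the integers within the rows of $A_-$ to form $A_{+,-}$ (resp. $A_{-,+}$) and then re-reading only changes the permutation by left-multiplication by an element of $\fS_{\la^*}$ on the first $m$ row-blocks (resp. by an element of $\fS_{{}^*\mu}$ on the last $n$ row-blocks): this is because reversing the entries inside a row $R_i^{\la\vee\mu}$ of the pseudo-matrix permutes the values lying in that row-block among themselves. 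Dually, the change in the reading order of the columns (top-to-bottom versus bottom-to-top for the two blocks of columns) amounts to right-multiplication by an element of $\fS_{\xi^*}$ (resp. $\fS_{{}^*\eta}$). Hence $w_A^{+,-}$ and $w_A^{-,+}$ both lie in $\fS_{\la|\mu}\,w_A^-\,\fS_{\xi|\eta}$, which is the double coset $\jmath^{-1}(A)$.

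Next I would verify the one-sided descent conditions directly from the construction. For $w=w_A^{+,-}$: within each of the first $m$ row-blocks the values are read in \emph{decreasing} order (bottom-to-top inside reversed rows, and the column reading concatenates these consistently), so for every $s=(k,k+1)$ with $k,k+1$ in the same block $R_i^\la$, $i\le m$, the preimages satisfy $w^{-1}(k)>w^{-1}(k+1)$, i.e. $sw<w$; within each of the last $n$ row-blocks the values are read in \emph{increasing} order, giving $tw>t$ for $t$ a simple reflection inside such a block. These are exactly the conditions $sw<w,\ \forall s\in\la$ and $tw>w,\ \forall t\in\mu$. Symmetrically, the column reading order is chosen so that along the first $m$ column-blocks the \emph{positions} occupied by $1,2,\dots$ go in increasing order within each block of $\xi$ while the last $n$ column-blocks are read so that positions decrease within each block of $\eta$; translating, for $s\in\xi$ one gets $ws<w$ and for $t\in\eta$ one gets $wt>w$. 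Matching these against the displayed set $\mathfrak D_{\la|\mu,\xi|\eta}^{+,-}$ in \eqref{double-sign} finishes that case; the argument for $w_A^{-,+}$ is identical with the roles of the two row-blocks (and two column-blocks) interchanged, matching the second display in \eqref{double-sign}.

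Since the element sets in \eqref{double-sign} are by definition subsets of $\fS_r$ satisfying those descent conditions, and $w_A^{\pm,\mp}$ lie in the prescribed double coset, the conclusion $w_A^{+,-}\in\mathfrak D_{\la|\mu,\xi|\eta}^{+,-}$ and $w_A^{-,+}\in\mathfrak D_{\la|\mu,\xi|\eta}^{-,+}$ follows once we know these intersections are nonempty --- but nonemptiness is precisely what our explicit construction exhibits, and the constraint $a_{ij}\in\{0,1\}$ when $\hat i+\hat j=1$ (i.e. $A\in M(m|n,r)$) is exactly what guarantees the mixed blocks $R_i^\la\cap wR_j^\eta$ and $R_{i-m}^\mu\cap wR_j^\xi$ have at most one element, as spelled out in the proof of Proposition \ref{bij}, so that the trivial-intersection hypothesis of Lemma \ref{TIP} holds and these sets are singletons. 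I expect the main obstacle to be purely bookkeeping: carefully tracking how the bottom-to-top versus top-to-bottom reading inside the pseudo-matrix, combined with the left-to-right-along-columns pass split into an $m$-part and an $n$-part, translates into the four separate descent conditions, and checking that no spurious descents are introduced at block boundaries (i.e. between consecutive row-blocks or column-blocks), which is handled by the fact that the reading order across blocks is monotone in the block index. This is the kind of verification best done by comparing with the classical computation of $w_A^-$ and $w_A^+$ in \cite{Du96} / \cite[Exer.~8.2]{DDPW} and noting that $w_A^{+,-}$ (resp. $w_A^{-,+}$) is obtained from $w_A^-$ by the partial-reversal operation that flips descents in exactly the first $m$ (resp. last $n$) row-blocks.
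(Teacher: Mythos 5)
Your proposal is correct and takes essentially the same route as the paper's own proof: the paper simply records the two descent criteria --- $l(ws_k)=l(w)+1$ iff $i_k<i_{k+1}$, and $l(s_kw)=l(w)+1$ iff $k$ precedes $k+1$ in the word $(i_1,\dots,i_r)$ --- and declares the block-by-block check against \eqref{double-sign} ``immediate,'' which is precisely the verification you spell out. One caution: your intermediate description of the column conditions is stated backwards --- along each $\xi$-column-block of $w_A^{+,-}$ the \emph{values} read off decrease (so $i_k>i_{k+1}$, whence $ws_k<w$), and along each $\eta$-column-block they increase; as written your sentence asserts the opposite monotonicity, even though the conclusions you then draw are the correct ones, and it is exactly in these column checks that the bound $a_{ij}\le 1$ on mixed cells is needed (a two-element cell $(i,j)$ with $i>m$, $j\le m$ would create an ascent inside a $\xi$-block).
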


\begin{proof}  Let $w=(i_1, i_2,
\cdots, i_r)\in\fS_r$ where $w(k)=i_k$. Let $s_k$ be the basic transposition
which switches $k$ and $k+1$.  Then $ w s_k$ is obtained from $w$ by
switching $i_k$ and $i_{k+1}$. On the other hand, $s_k w $ is
obtained from $w$ by switching $i_p$ and $i_q$ if $\{i_p, i_q\}=\{k,
k+1\}$. Since
\begin{equation}\label{length} l(w)=\sum_{j=1}^r \# \{(j,k)\mid j<k,
i_j>i_k\}.\end{equation} it follows that $l(ws_k )=l(w)+1$ if and
only if $i_k<i_{k+1}$, while $l(s_k w)=l(w)+1$ if and only if $\{k,
k+1\}$ is a subsequence of $\{i_1, i_2, \cdots, i_r\}$. Now, the
result follows immediately by taking $w=w_A^{+,-}$ of $w_A^{-,+}$.
\end{proof}

\section{Young supertableaux and RSK super-correspondence}

Before generalizing \ref{RS}, we need some combinatorial preparations.

For $\lambda\in \Lambda^+(n, r)$ and $\mu\in \Lambda(n, r)$, a
$\lambda$-tableau $\ssS$ of {\it content} (or {\it type}) $\mu\models n$ is
the tableau obtained from $Y(\lambda)$ by inserting each box with
numbers $i, 1\le i\le n$, such that the number $i$ occurring in
$\ssS$ is $\mu_i$. If the entries in $\ssS$ are weakly increasing in
each row (resp., column) and strictly increasing in each column
(resp. row), $\ssS$ is called a {\it row (resp., column)
semi-standard} $\lambda$-tableau of content $\mu$. A row semistandard tableau
is simply called semistandard tableau sometimes.  Let $\bfT(\la,\mu)$ (resp.,
$\bfT^{ss}(\lambda, \mu)$) be the set of all
$\lambda$-tableaux (resp., semi-standard $\la$-tableau) of content $\mu$.
If $\bfT^{ss}(\lambda, \mu)\neq\emptyset$, then $\lambda\unrhd\mu$.

Fix two non-negative integers $m, n$ with $m+n>0$, define
\begin{equation}\label{cellweight}
 \Lambda^+(r)_{m|n}=\{\lambda\in \Lambda^+(r), \lambda_{m+1}\le
n\}
\end{equation}
If $\la\in \Lambda^+(r)_{m|n}$, then $Y(\la)$ is inside a hook of height $m$ and base $n$ and is called a
$(m,n)$-hook Young diagram. See,e.g.,\cite[2.3]{BR} where $\Lambda^+(r)_{m|n}$ is denoted as $H(m,n;r)$.

The set $\Lambda^+(r)_{m|n}$ is in general not a subset of $\Lambda(m|n, r)$ or $\Lambda^+(m|n, r)$;
see \eqref{La(m|nr)}. However, each partition
$\lambda\in \Lambda^+(r)_{m|n}$ uniquely determines a pair of
partitions $\la'$ and $\la''$ with
\begin{equation}\label{pair}
\la'=(\la_1,\ldots, \la_m),\qquad\la''=(\la_{m+1},\la_{m+2},\ldots)^t,
\end{equation}
where, for $\nu\vdash r$, $\nu^t$ denotes the partition dual to
$\nu$. (In other words, the Young diagram $Y(\nu^t)$ is the
transpose of $Y(\nu)$.)  The condition $\la_{m+1}\leq n$ implies
$\la'|\la''\in \Lambda^+(m|n, r)$. Thus, we obtain an injective map
\begin{equation}\label{map wp}
\Lambda^+(r)_{m|n}\longrightarrow\Lambda^+(m|n,r),\quad\la\longmapsto(\la',\la'').
\end{equation}
 The pair $(\la',\la'')$ is sometimes called a dominant weight in the
representation theory of quantum general linear superalgebra
$U_q(\mathfrak{gl}(m|n))$; see, e.g., \cite{Moon} and \cite{Mit}.
Note that, for $\la\in \Lambda^+(r)_{m|n}$, $Y(\la)$ is called an   $(m, n)$-hook diagram
in \cite[\S4.1]{BKK}.

We now introduce, following \cite[\S1.2]{Ser} (cf. \cite[Def.~4.1]{BKK}), the notion of
semistandard $\lambda$-supertableau of content $\mu|\nu$.

Let
$\lambda\in \Lambda^+(r)_{m|n}$, $\mu|\nu \in \Lambda(m|n, r)$. A
$\lambda$-tableau $\ssS$ of content $\mu\vee\nu$ is called a \textsf
{semi-standard $\lambda$-supertableau of content $\mu|\nu$} if
\begin{enumerate}
\item the entries in $\ssS$ are weakly increasing in each row
and each column of $\ssS$;
\item the numbers in
$\{1, 2, \cdots, m\}$ are strictly increasing in the columns and the numbers in
$\{m+1, m+2, \cdots, m+n\}$ are strictly increasing in the rows.
\end{enumerate}
In other words, a semi-standard $\lambda$-supertableau of content $\mu|\nu$ is a tableau of content
$\mu|\nu$ such that the tableau $\ssT|_{[1,m]}$ obtained by removing entries $m+1,\ldots,m+n$ is a (row)
semi-standard tableau of content $\mu$ and the tableau obtained from $\ssT$ by removing $\ssT|_{[1,m]}$
is a column semi-standard {\it skew}-tableau of content $\nu$.

Let $\bfT^{sss}(\lambda, \mu|\nu)$ be the set of all semi-standard
 $\lambda$-supertableaux of content $ \mu|\nu$. Clearly,
$\bfT^{sss}(\lambda, \mu|0)=\bfT^{ss}(\lambda, \mu)$. Moreover, for $\ssS\in
\bfT^{sss}(\lambda, \mu|\nu)$, the subtableau obtained by removing all
$i$-th rows from $\ssS$ with $1\le i\le m$ is column semistandard.

\begin{example}\label{ssT_la} For any $\lambda\in \Lambda^+(r)_{m|n}$, there is a unique $\la$-tableau $\ssT_\la$ of content
$\la'|\la''$. For example, if $\la=(4,4,3,2,2,1)$ and $m=2,n=4$, then $\la'=(4,4)$, $\la''=(4,3,1)$ and
$$\ssT_\la=\young(1111,2222,345,34,34,3)$$
\end{example}

  The following result is known; see \cite[Theorem~2]{Ser} or \cite[Lemma 4.2]{BKK}. For completeness, we include a proof.

\begin{Lemma}\label{equi} For a partition $\lambda\in \Lambda^+(r)$, $\bfT^{sss}(\lambda, \mu|\nu)\neq \emptyset$ for some $\mu|\nu\in \Lambda(m|n, r)$ if and only if $\lambda\in \Lambda^+(r)_{m|n}$.
\end{Lemma}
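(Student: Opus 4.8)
The plan is to treat the two implications separately. For the easy direction, assuming $\lambda\in\Lambda^+(r)_{m|n}$ I would exhibit an explicit element of $\bfT^{sss}(\lambda,\mu|\nu)$, namely the tableau $\ssT_\lambda$ of Example~\ref{ssT_la}: fill the $i$-th row of $Y(\lambda)$ entirely with the entry $i$ for $1\le i\le m$, and fill the $j$-th column of the remaining rows $m+1,m+2,\dots$ entirely with the entry $m+j$. Reading off the content gives the pair $\lambda'|\lambda''$ of \eqref{pair}, which lies in $\Lambda^+(m|n,r)\subseteq\Lambda(m|n,r)$ by the remark following \eqref{pair}; this is exactly where $\lambda_{m+1}\le n$ is used, to ensure $\lambda''$ has at most $n$ parts, equivalently that the largest entry $m+j$ appearing is at most $m+n$. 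The two monotonicity conditions defining a semistandard $\lambda$-supertableau --- weak increase along rows and columns, strict increase of the symbols $\le m$ down columns and of the symbols $>m$ along rows --- are immediate from the shape of this filling, so $\bfT^{sss}(\lambda,\lambda'|\lambda'')\ne\emptyset$.

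For the converse, suppose $\ssS\in\bfT^{sss}(\lambda,\mu|\nu)$ and put $\tau=\{(i,j)\in Y(\lambda):\ssS(i,j)\le m\}$. Because the entries of $\ssS$ weakly increase along rows and down columns, $\tau$ is closed under moving up or to the left within $Y(\lambda)$, hence $\tau$ is (the set of cells of) a Young subdiagram of $Y(\lambda)$. In particular each column of $\tau$ is a top-justified contiguous run of cells on which the entries strictly increase and take values in $\{1,\dots,m\}$; therefore no column of $\tau$ has more than $m$ cells, i.e.\ $\tau$ has at most $m$ rows. It follows that every cell of the $(m+1)$-st row of $Y(\lambda)$ lies outside $\tau$, so all $\lambda_{m+1}$ entries of row $m+1$ of $\ssS$ are $>m$; by the strict-increase condition for the symbols $m+1,\dots,m+n$ along rows, these $\lambda_{m+1}$ entries are distinct elements of $\{m+1,\dots,m+n\}$, whence $\lambda_{m+1}\le n$, i.e.\ $\lambda\in\Lambda^+(r)_{m|n}$.

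I do not anticipate a genuine obstacle; the only step needing a little care is the assertion that the cells of $\ssS$ carrying entries $\le m$ form a Young subdiagram $\tau$ of $\lambda$ --- this must be extracted from the \emph{weak} monotonicity of $\ssS$ in both directions rather than from the strict conditions. Once $\tau$ is identified, bounding its number of rows by $m$ (via column-strictness for the small symbols) and then bounding the length of row $m+1$ by $n$ (via row-strictness for the large symbols) are both immediate, and checking that the filling $\ssT_\lambda$ above is genuinely semistandard of content $\lambda'|\lambda''$ completes the forward direction.
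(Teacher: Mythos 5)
Your proof is correct and takes essentially the same route as the paper's: the forward direction exhibits $\ssT_\la$, and the converse rests on the same key observation that column-strictness of the entries $\le m$ confines them to the first $m$ rows. The only divergence is in the final step --- you bound $\la_{m+1}$ directly by counting the distinct entries $>m$ in row $m+1$ via row-strictness, whereas the paper transposes the portion below row $m$ and invokes $\la''\unrhd\nu^{(2)}$ to bound the number of parts of $\la''$ by $n$ --- but this is a minor (and slightly more elementary) variation, not a different argument.
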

\begin{proof} If $\lambda\in \Lambda^+(r)_{m|n}$, then $\bfT^{sss}(\la,\la'|\la'')=\{\ssT_\la\}\neq\emptyset$. Conversely, suppose $\ssT\in \bfT^{sss}(\lambda, \mu|\nu)$. Then the numbers $1,2,\ldots,m$ do not appear in the rows below row m. Let  $\ssT''$ be the transpose of the tableau obtained by
removing the first $m$ rows from $\ssT$.  Replacing every entry $x$ in $\ssT''$ by $x-m$ yields a semistandard $\la''$-tableau with content $\nu^{(2)}$ for some $\nu^{(2)}\in\Lambda(n,r_2)$. Now, $\la''\trianglerighteq \nu^{(2)}$ implies that $\la_{m+1}$, which is the number of parts of $\la''$, is less than or equal to the number of parts of $\nu^{(2)}$, which is $\leq n.$
Hence, $\la\in\Lambda^+(r)$.
\end{proof}

The RSK super-correspondence is about a bijection between $M(m|n,r)$ and the pairs of
semistandard super tableaux of the same shape. Since the correspondence will be used to describe super-cells and associated modules, our construction relies on the relationship between Kazhdan-Lusztig cells of $\fS_r$ and their combinatorial characterization.

For a fixed $w\in \mathfrak S_r$ and $T\in \bfT(\lambda,\mu)$. Define
$w_T\in \mathfrak S_r$ by letting $w_T (\ts^\mu) $ be the
row standard $\mu$-tableau such that the integers in the $i$th row of
$w_T(\ts^\mu) $ are the entries of $w(\ts^\la) $
whose positions are the same as those of the $\mu_i$ entries $i$ in $T$.
 It is easy to see that the map
$\bfT(\lambda,\mu)\to\mathfrak D_\mu^{-1},\,\,T\mapsto w_T$ is
bijective. The inverse $T_w^{\la,\mu}$ of this map can be defined as follows: for
$x\in \mathfrak D_{\mu}^{-1}$, define $T_w^{\la,\mu} (x)\in
\bfT(\lambda,\mu)$ by specifying that, for all $i,j$, if the entry
in $(i, j)$ position of $w(\ts^\la)$ is $a$, then the entry in
the same position in $T_w^{\la,\mu} (x)$ is the row index of $a$ in
the row standard $\mu$-tableau $x(\ts^\mu)$.

\begin{example} If $\la=(431)$, $\mu=(3,2,2,1)$, $w=w_{0, \lambda}$ the
longest element in  $\mathfrak S_\lambda$, and
$T=\young(1112,234,3)$, then
$$\qquad w_{0, \la}(\ts^{\la}) =\young(4321,765,8)\text{
and }(w_{0, \la})_T(\ts^{\mu})=\young(234,17,68,5)$$ If we
write $\mu$ as $\xi|\eta=(3,2)|(2,1)$, then
$$({w_{0, \la}})_T w_{0, \xi}(\ts^{\xi\vee\eta})=\young(432,71,68,5)$$
\end{example}
Observe from the example that the tableaux  $({w_{0, \la}})_T w_{0,
\xi}(\ts^{\xi\vee\eta})$, where $\mu=\xi|\eta\in\Lambda(m|n,r)$,  is obtained from $({w_{0, \la}})_T
(\ts^{\xi\vee\eta})$ by reversing  the entries in the
$i$th-rows for each $i,1\le i\le m$.

We say that  $\mathbf i=(i_1, i_2, \cdots, i_k)$ is a subsequence of
$\mathbf j=(j_1, j_2, \cdots, j_l)$ if it is obtained from $\mathbf
j$ by deleting some entries of $\mathbf j$.

\begin{Lemma}\label{sss} For $\la\vdash r, \mu|\nu\in\Lambda(m|n,r)$, and $\ssT\in \bfT(\la, \mu\vee\nu)$, let $(w_{0,\la})_{\ssT}w_{0, \mu} =(j_1,j_2, \cdots, j_r)$ be the permutation sending $i$ to $j_i$. Then,
$\ssT\in \bfT^{sss}(\la, \mu|\nu)$ if and only if the rows $R_i$ and
columns $C_j$ of $w_{0, \la}(\ts^\la) $ are all subsequence of
$j_1, j_2, \cdots, j_r$.
\end{Lemma}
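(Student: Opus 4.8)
The plan is to convert both subsequence conditions into conditions on the tableau $\ssT$, by first describing the word $(j_1,\dots,j_r)$ concretely as a reading word.

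First I would identify $(j_1,\dots,j_r)$. Write $a_{p,q}$ for the $(p,q)$-entry of $w_{0,\la}(\ts^\la)$; explicitly $a_{p,q}=\la_1+\dots+\la_{p-1}+\la_p+1-q$, so that along each row of $w_{0,\la}(\ts^\la)$ the entries strictly decrease, every entry of row $p$ is smaller than every entry of row $p+1$, and hence each column strictly increases from top to bottom. By the definition of $(w_{0,\la})_\ssT$, the $i$-th row of $(w_{0,\la})_\ssT(\ts^{\mu\vee\nu})$ is the set $\{\,a_{p,q}\mid\ssT(p,q)=i\,\}$ listed in increasing order; and, as observed above, $(w_{0,\la})_\ssT w_{0,\mu}(\ts^{\mu\vee\nu})$ is obtained from this by reversing rows $1,\dots,m$. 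Call the resulting $(\mu\vee\nu)$-tableau $\ssU$, so that the $i$-th row of $\ssU$ is $\{\,a_{p,q}\mid\ssT(p,q)=i\,\}$ listed in decreasing order when $i\le m$ and in increasing order when $i>m$. Since, for any $\pi\in\fS_r$, reading $\pi(\ts^{\mu\vee\nu})$ along successive rows from left to right produces $(\pi(1),\dots,\pi(r))$, the word $(j_1,\dots,j_r)$ is precisely the left-to-right, top-to-bottom reading word of $\ssU$. In particular it is a permutation of $\{1,\dots,r\}$, so ``precedes in $(j_1,\dots,j_r)$'' is a total order on $\{1,\dots,r\}$, and a tuple of distinct integers is a subsequence of $(j_1,\dots,j_r)$ if and only if each of its consecutive pairs appears in the correct relative order.

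Next I would run the pairwise test. Consider a consecutive pair $a,a'$ of some $R_p$ or $C_q$, sitting in boxes of $\ssT$ carrying values $i$ and $i'$. Then $a$ precedes $a'$ in the reading word of $\ssU$ exactly when $i<i'$, or $i=i'\le m$ and $a>a'$, or $i=i'>m$ and $a<a'$: the first two alternatives because $a$ and $a'$ then lie in distinct rows of $\ssU$, which enter the reading word in increasing order of their index; the last two because the row of $\ssU$ indexed by $i$ is listed decreasingly when $i\le m$ and increasingly when $i>m$. Feeding in the consecutive pairs of a row $R_p=(a_{p,1},a_{p,2},\dots)$, which strictly decreases, shows that $R_p$ is a subsequence of $(j_1,\dots,j_r)$ if and only if row $p$ of $\ssT$ is weakly increasing and its entries greater than $m$ strictly increase. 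Feeding in the consecutive pairs of a column $C_q=(a_{1,q},a_{2,q},\dots)$, which strictly increases, shows that $C_q$ is a subsequence if and only if column $q$ of $\ssT$ is weakly increasing and its entries at most $m$ strictly increase. Intersecting these conditions over all $p$ and all $q$ gives exactly: $\ssT$ is weakly increasing along every row and every column, the entries in $\{1,\dots,m\}$ strictly increase down columns, and the entries in $\{m+1,\dots,m+n\}$ strictly increase along rows. As $\ssT$ already has content $\mu\vee\nu$ by hypothesis, this says precisely that $\ssT\in\bfT^{sss}(\la,\mu|\nu)$, completing the proof.

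The main obstacle is orientation bookkeeping: keeping track of which $m$ of the $m+n$ rows of $\ssU$ get reversed, together with the matching fact that the row of its reading word indexed by $i$ is listed decreasingly for $i\le m$ and increasingly for $i>m$. These signs are exactly what make the two subsequence conditions land on conditions (1) and (2) of the definition of $\bfT^{sss}(\la,\mu|\nu)$ rather than on a twisted variant; once they are pinned down, the pairwise case analysis is routine.
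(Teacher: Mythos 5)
Your proof is correct and follows essentially the same route as the paper's: both identify $(j_1,\dots,j_r)$ as the reading word of the tableau obtained from $(w_{0,\la})_{\ssT}(\ts^{\mu\vee\nu})$ by reversing its first $m$ rows, and then track where consecutive entries of the rows and columns of $w_{0,\la}(\ts^\la)$ land in that word. Your reduction to a pairwise precedence criterion is just cleaner bookkeeping for the same idea; as a bonus it makes the ``only if'' direction (including the strict-increase conditions) fully explicit, where the paper dispatches it with a one-line remark.
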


\begin{proof}  If $y=(w_{0,\la})_{\ssT}w_{0,
\mu} =(j_1,j_2, \cdots, j_r)$, then $(w_{0,\la})_{\ssT}w_{0,
\mu}(\ts^{\mu\vee\nu})$ has sequence $j_1,\ldots, j_{\mu_1}$
in the first row, $j_{\mu_1+1},\ldots,j_{\mu_1+\mu_2}$ in the second
and so on, and the first $m$ rows are obtained by reversing the
first $m$ rows of $(w_{0,\la})_{\ssT}(\ts^{\mu\vee\nu})$,
while the next $n$ rows are the same as the corresponding rows of
$(w_{0,\la})_{\ssT}(\ts^{\mu\vee\nu})$. In particular, the
first $m$ rows are decreasing, while the next $n$ rows are
increasing.

A column of $\ssT$ has the form $a_1a_2\ldots a_l a..abb\ldots$
(from top to bottom) with $p$ $a$'s, $q$ $b$'s and so on for some $l,p,q,\ldots\geq0$, where
$a_1<a_2\cdots<a_l\leq m<a<b\cdots\leq m+n$. By definition, the first $l$ members of
$C_j$ are placed in different rows of
$(w_{0,\la})_{\ssT}(\ts^{\mu\vee\nu})$ (and hence of
$(w_{0,\la})_{\ssT}w_{0, \mu}(\ts^{\mu\vee\nu})$) with row indexes
$a_1,a_2,\ldots,a_l$, and then the next $p$ members of $C_j$ are
placed (as a whole) in row $a$ and the next $q$ members are in row
$b$, and so on. Note that $a, b, \cdots, $ are strictly great than $m$.
Hence, $C_j$ is a subsequence of $j_1,j_2, \cdots, j_r$. This proves
the result for  $C_j$ for all $j'$s.

Likewise, the $i$th row of $\ssT$ has the form $a.. abb\ldots a_1\ldots a_l$ with $p$ $a$'s, $q$ $b$'s and
so on for some $l,p,q,..\geq0$, where $a<b<\cdots\leq m<a_1<\cdots<a_l\leq m+n$.
Thus, the first $p$ members of $R_i$ (as a whole) form part of
the row $a$ of $(w_{0,\la})_{\ssT}(\ts^{\mu\vee\nu})$ (and hence of
$(w_{0,\la})_{\ssT}w_{0, \mu}(\ts^{\mu\vee\nu})$ since they are decreasing), the next $q$ members form part of row $b$, and so on. Then the members of $R_i$ are placed in
different rows between row $m+1$ and row $m+n$. Hence, $R_i$ is a
subsequences of $j_1,j_2, \cdots, j_r$. The argument above also
shows that if either the $i$th row or $j$th column of $\ssT$ is not
(weakly) increasing, then either $R_i$ or $C_j$ is not a subsequences of
$j_1,j_2, \cdots, j_r$, proving the lemma.
\end{proof}

The following result is the key to the establishment of the RSK super-correspondence.
\begin{Prop}\label{supersemi}Suppose $\mu|\nu\in \Lambda(m|n, r)$ and
$\la\in \Lambda(r)^+$. If $\varpi_\la$ denotes the right cell of
$\mathfrak S_r$ containing $w_{0, \la}$, then
$$
\mathfrak D_{\!\la|0, \mu|\nu}^{+,-}\cap \varpi_{\la} =\{(w_{0,
\la})_{\ssT} w_{0, \mu} \mid \ssT\in \bfT^{sss} (\la, \mu|\nu)\}.$$
\end{Prop}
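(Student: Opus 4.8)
The plan is to prove the claimed equality $\mathfrak D_{\!\la|0,\mu|\nu}^{+,-}\cap\varpi_\la=\{(w_{0,\la})_{\ssT}w_{0,\mu}\mid\ssT\in\bfT^{sss}(\la,\mu|\nu)\}$ by reducing everything to the subsequence criterion of Lemma~\ref{sss} and the combinatorial description of the Kazhdan--Lusztig right cell $\varpi_\la$ via the Robinson--Schensted map from Theorem~\ref{RS}. First I would recall that, since $w_{0,\la}$ is the longest element of $\fS_\la$, one has $P(w_{0,\la})=\ts_\la$ (the column-superstandard tableau) and, by Theorem~\ref{RS}(2), $\varpi_\la=\{y\in\fS_r\mid P(y)=\ts_\la\}$. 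The key classical fact I would invoke is the characterization of this right cell in terms of subsequences: $y\in\varpi_\la$ if and only if each row $R_i$ and each column $C_j$ of $w_{0,\la}(\ts^\la)$ occurs as a subsequence of the one-line word of $y$ (equivalently, the longest increasing/decreasing subsequence data matches the shape $\la$ exactly, by the Greene--Kleitman theorem, or more directly: $y$ has $P$-tableau of shape $\la$ iff its word decomposes into the rows of $w_{0,\la}(\ts^\la)$ read off as subsequences). This is exactly the condition appearing in Lemma~\ref{sss}.

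With that identification in place, the proof becomes essentially a matching of two conditions on a permutation $y=(w_{0,\la})_{\ssT}w_{0,\mu}$ where $\ssT\in\bfT(\la,\mu\vee\nu)$. On one side, by Lemma~\ref{sss}, $\ssT\in\bfT^{sss}(\la,\mu|\nu)$ iff the rows and columns of $w_{0,\la}(\ts^\la)$ are all subsequences of the one-line form of $y$; by the cell characterization this says precisely $y\in\varpi_\la$. On the other side, I must show $y\in\mathfrak D_{\!\la|0,\mu|\nu}^{+,-}$, i.e. (using \eqref{double-sign} with left-type $\la|0$ and right-type $\mu|\nu$) that $sy<y$ for all $s\in\la$, $ty>y$ vacuously since the $0$-block is empty, and on the right $ys<y$ for all $s\in\xi$... wait, here the right side is $\mu|\nu$, so I need $ys<y$ for all $s\in\mu$ and $yt>y$ for all $t\in\nu$. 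The first two (left condition $sy<y$ for $s\in\la$, and the trivially-satisfied $0$-block) hold automatically because $(w_{0,\la})_{\ssT}$ maps $\ts^\mu$ (hence $\ts^{\mu\vee\nu}$ after composing with $w_{0,\mu}$) via $w_{0,\la}$, whose image $w_{0,\la}(\ts^\la)$ has strictly decreasing rows — this forces $y$ to be the maximal-length element on the left in its coset over $\fS_{\la}$ (equivalently $\la^*$). The right condition $ys<y$ for $s\in\mu$, $yt>y$ for $t\in\nu$ is exactly the statement that the first $m$ rows of $y(\ts^{\mu\vee\nu})$ are decreasing and the last $n$ rows are increasing, which is precisely the structure of $(w_{0,\la})_{\ssT}w_{0,\mu}(\ts^{\mu\vee\nu})$ described in the proof of Lemma~\ref{sss}.

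Concretely, the argument I would write has three steps. Step 1: show the map $\ssT\mapsto(w_{0,\la})_{\ssT}w_{0,\mu}$ sends $\bfT^{sss}(\la,\mu|\nu)$ into $\mathfrak D_{\!\la|0,\mu|\nu}^{+,-}$. For this I use the bijection $\bfT(\la,\mu\vee\nu)\to\mathfrak D_{\mu\vee\nu}^{-1}$, $\ssT\mapsto w_\ssT=(w_{0,\la})_\ssT$ (introduced just before Lemma~\ref{sss}, with $w=w_{0,\la}$), observe $w_\ssT\in\mathfrak D_{\mu\vee\nu}^{-1}$ gives $ws>w$ for all $s\in\mu\cup\nu$ on the right, then composing with $w_{0,\mu}$ reverses the $\mu$-blocks to make those descents while leaving the $\nu$-blocks as ascents; and the left condition $s(w_{0,\la})_\ssT w_{0,\mu}<(w_{0,\la})_\ssT w_{0,\mu}$ for $s\in\la$ follows because the one-line word begins (in the appropriate row-blocks) with decreasing runs coming from rows of $w_{0,\la}(\ts^\la)$. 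Step 2: membership in $\varpi_\la$ is equivalent to $\ssT\in\bfT^{sss}(\la,\mu|\nu)$ by combining Lemma~\ref{sss} with the subsequence characterization of the right cell $\varpi_\la=\{P(\,\cdot\,)=\ts_\la\}$. Step 3: surjectivity — given $y\in\mathfrak D_{\!\la|0,\mu|\nu}^{+,-}\cap\varpi_\la$, since $y\in\mathfrak D_{\la^*}^+\cap\mathfrak D_{{}^*(\mu\vee\nu)}$ we can write $y=(w_{0,\la})_\ssT w_{0,\mu}$ uniquely for some $\ssT\in\bfT(\la,\mu\vee\nu)$ (factor off $w_{0,\mu}$ on the right to land in $\mathfrak D_{\mu\vee\nu}^{-1}$, then use the bijection), and $y\in\varpi_\la$ forces via Step 2 that $\ssT$ is a semistandard supertableau. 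The main obstacle I anticipate is Step 1, specifically the careful bookkeeping that the maximal/minimal length conditions defining $\mathfrak D_{\!\la|0,\mu|\nu}^{+,-}$ translate correctly under the composition with $w_{0,\mu}$ — one has to track which blocks of rows become decreasing versus increasing and verify the descent/ascent pattern at every adjacent transposition $s\in\la$, $s\in\mu$, $t\in\nu$; the length-formula \eqref{length} together with the "subsequence of the one-line word" reformulation from Proposition~\ref{alg}'s proof is the tool that makes this manageable.
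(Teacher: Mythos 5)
Your proposal is correct and follows essentially the same route as the paper's proof: identify $\varpi_\la$ with $\{(w_{0,\la})_{\ts}\mid \ts\in\bfT^s(\la)\}$ (equivalently, the subsequence criterion of Lemma~\ref{sss} in the case $\mu|\nu=\omega|0$), then use the bijection $\bfT(\la,\mu\vee\nu)\leftrightarrow\fD_{\mu\vee\nu}^{-1}$ and the factorization $y=(w_{0,\la})_{\ssT}w_{0,\mu}$ to translate membership in $\fD^{+,-}_{\la|0,\mu|\nu}\cap\varpi_\la$ into semistandardness of $\ssT$ via Lemma~\ref{sss}. Two cosmetic cautions: $P(w_{0,\la})=\ts_{\la^t}$ rather than $\ts_\la$, and the left-descent condition $sy<y$ for $s\in\la$ is not automatic for arbitrary $\ssT\in\bfT(\la,\mu\vee\nu)$ but is itself a consequence of the subsequence/semistandardness condition, as your own ``careful bookkeeping'' caveat anticipates.
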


\begin{proof}By \cite[3.2]{Du95},\footnote{A right action was used for the
symmetric group $\mathfrak S_r$ in \cite{Du95}. Thus, the left cell containing $w_{0,\la}$
was used there.} or more precisely, \cite[Lem. 8.20]{DDPW}, we have
$$\varpi_\la=\{(w_{0, \la})_{\ts}\mid\ts\in \bfT^s(\la)\},$$
where $(w_{0, \la})_{\ts}$ is simply defined by $(w_{0,
\la})_{\ts}(\ts)=w_{0, \la}(\ts^\la)$. Hence,
$$\mathfrak D_{\!\la|0,
\mu|\nu}^{+,-}\cap \varpi_{\la} =\{(w_{0, \la})_{\ts}\mid
\ts\in\bfT^s(\la),(w_{0, \la})_{\ts}\in \fD_{\!\la|0,\mu|\nu}^{+,-}\}.$$
We now prove that
$$\{(w_{0, \la})_{\ts}\mid
\ts\in\bfT^s(\la),(w_{0, \la})_{\ts}\in \fD_{\!\la|0,\mu|\nu}^{+,-}\}=
\{(w_{0, \la})_{\ssT} w_{0, \mu} \mid \ssT\in \bfT^{sss} (\la,
\mu|\nu)\}.$$

If we put $\omega=(1^r)$, then $\bfT^s(\la)=\bfT^{ss}(\la,\omega)$.
Suppose $\ts\in\bfT^{s}(\la)$ and $(w_{0, \la})_{\ts}\in
\fD_{\!\la|0,\mu|\nu}^{+,-}$. Then by definition $x=(w_{0,
\la})_{\ts}w_{0,\mu}\in\fD_{\mu\vee\nu}^{-1}$. Let
$\ssT=T_{w_{0,\la}}^{\la,\mu\vee\nu}(x)\in\bfT(\la,\mu|\nu)$ so that
$x=(w_{0,\la})_\ssT$ and $(w_{0, \la})_{\ts}=(w_{0,\la})_\ssT
w_{0,\mu}$. We claim that $\ssT\in\bfT^{sss}(\la,\mu|\nu)$. Indeed,
suppose $(w_{0,\la})_{\ts}=(i_1, i_2, \ldots, i_r)$.
By applying Lemma \ref{sss} to the case where $\mu|\nu=\omega|0$,   $\ts$ is
standard implies that the rows $R_i$ and columns $C_j$ of
$w_{0,\la}(\ts^\la)$ are subsequences of $i_1, i_2, \ldots, i_r$.
Thus, the same lemma (applied to $(w_{0,\la})_\ssT w_{0,\mu}=(i_1,
i_2, \ldots, i_r)$) implies that $\ssT\in \bfT^{sss}(\la,\mu|\nu)$.

Conversely, for any $\ssT\in \bfT^{sss}(\la, \mu|\nu)$, assume
$(w_{0,\la})_{\ssT}w_{0, \mu} =(j_1,j_2, \ldots, j_r)$. By Lemma
\ref{sss}, the rows $R_i$ and columns $C_j$ of $w_{0,\la}(\ts^\la)$
are subsequences of $j_1,j_2, \ldots, j_r$. Suppose
$R_1=\{j_{i_1},\ldots,j_{i_{\la_1}}\}$,
$R_2=\{j_{i_{\la_1+1}},\ldots\}$ and so on. Then the $\la$-tableau
$\ts$ obtained by putting $i_1,\ldots,i_{\la_1},i_{\la_1+1},\ldots$
from left to right down successive rows is standard and $(w_{0,
\la})_{\ts}=(w_{0,\la})_\ssT w_{0,\mu}$.\end{proof}

\begin{Cor}\label{multiplicity}
For $\mu|\nu\in\Lambda(m|n,r)$,  $\fD_{\emptyset,\mu|\nu}^{+,-}$ is a
union of left cells. For $\la\vdash r$, if $K_\la$ denotes the two-sided cell
containing $w_{0,\la}$, then the number
$m_{\la,\mu|\nu}$ of left cells in $K_\la\cap
\fD_{\emptyset,\mu|\nu}^{+,-}$ is $|\bfT^{sss}(\la,\mu|\nu)|$.
\end{Cor}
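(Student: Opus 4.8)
The plan is to reduce the statement to pure Kazhdan--Lusztig cell combinatorics for $\fS_r$ and then feed it into Proposition~\ref{supersemi}. Reading \eqref{double-sign} off with empty row-index, one has $\fD_{\emptyset,\mu|\nu}^{+,-}=\{x\in\fS_r : \mu\subseteq\mathcal R(x),\ \nu\cap\mathcal R(x)=\emptyset\}$, where $\mu$ and $\nu$ are regarded as subsets of $S$; this is a subset of $\fS_r$ defined solely by a condition on the right descent set $\mathcal R(x)$. Since $\mathcal R$ is constant on left cells (Lemma~\ref{LRset}), $\fD_{\emptyset,\mu|\nu}^{+,-}$ is a union of left cells --- the first assertion. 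As $K_\la$ is a two-sided cell, it too is a union of left cells, so $K_\la\cap\fD_{\emptyset,\mu|\nu}^{+,-}$ is a union of left cells, and $m_{\la,\mu|\nu}$ is the number of them.

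Next I would convert this count of left cells into a count of elements of one fixed right cell. Let $\varpi_\la$ be the right cell containing $w_{0,\la}$, so $\varpi_\la\subseteq K_\la$; write $\sigma$ for the common shape of the tableaux $P(x)$, $x\in\varpi_\la$. By Theorem~\ref{RS} (and the fact that $P(x)$ and $Q(x)=P(x^{-1})$ have the same shape) the left cells inside $K_\la$ are exactly the fibres $\{x : Q(x)=Q\}$ with $Q$ running over the standard tableaux of shape $\sigma$, while $\varpi_\la=\{x : P(x)=P(w_{0,\la})\}$; bijectivity of the Robinson--Schensted map \eqref{RSM} then shows that each left cell of $K_\la$ meets $\varpi_\la$ in exactly one point. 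Hence $\kappa\mapsto$ (the point of $\kappa\cap\varpi_\la$) is a bijection from the set of left cells contained in $K_\la\cap\fD_{\emptyset,\mu|\nu}^{+,-}$ onto $\varpi_\la\cap\fD_{\emptyset,\mu|\nu}^{+,-}$: injectivity is clear, and for surjectivity any $x$ in the target lies in a left cell, which is contained in $K_\la$ (as $x\in\varpi_\la\subseteq K_\la$) and in $\fD_{\emptyset,\mu|\nu}^{+,-}$ (a union of left cells). So $m_{\la,\mu|\nu}=\bigl|\varpi_\la\cap\fD_{\emptyset,\mu|\nu}^{+,-}\bigr|$.

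To finish, I would match $\varpi_\la\cap\fD_{\emptyset,\mu|\nu}^{+,-}$ with the set appearing in Proposition~\ref{supersemi}. Applying Lemma~\ref{LRset} to inverses shows $\mathcal L$ is constant on right cells, so every $x\in\varpi_\la$ has $\mathcal L(x)\supseteq\mathcal L(w_{0,\la})\supseteq\la$, the last inclusion because $w_{0,\la}$ is the longest element of $\fS_\la$. Now $\fD_{\la|0,\mu|\nu}^{+,-}$ is obtained from $\fD_{\emptyset,\mu|\nu}^{+,-}$ precisely by adjoining the condition $\la\subseteq\mathcal L(x)$ (again read off \eqref{double-sign}), and this condition holds for every $x\in\varpi_\la$; hence $\varpi_\la\cap\fD_{\emptyset,\mu|\nu}^{+,-}=\fD_{\la|0,\mu|\nu}^{+,-}\cap\varpi_\la$. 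By Proposition~\ref{supersemi} this last set equals $\{(w_{0,\la})_\ssT w_{0,\mu}\mid\ssT\in\bfT^{sss}(\la,\mu|\nu)\}$; since $\ssT\mapsto(w_{0,\la})_\ssT$ is the bijection $\bfT(\la,\mu\vee\nu)\to\fD_{\mu\vee\nu}^{-1}$ and right multiplication by $w_{0,\mu}$ is a bijection of $\fS_r$, the indexing map is injective, so this set has $|\bfT^{sss}(\la,\mu|\nu)|$ elements (in particular both sides are $0$ when $\la\notin\Lambda^+(r)_{m|n}$, cf.\ Lemma~\ref{equi}). Stringing the equalities together gives $m_{\la,\mu|\nu}=|\bfT^{sss}(\la,\mu|\nu)|$.

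There is no deep obstacle once Proposition~\ref{supersemi} is available; the care is entirely in the bookkeeping --- correctly extracting the descent-set descriptions from \eqref{double-sign}, using that $\mathcal R$ and $\mathcal L$ are cell invariants, and observing that inside the single two-sided cell $K_\la$ the Robinson--Schensted correspondence puts exactly one left cell through each element of the right cell $\varpi_\la$. The one point to watch is that $K_\la$ is the two-sided cell of $w_{0,\la}$, whose shape $\sigma$ need not equal $\la$, so that $\varpi_\la$ genuinely sits inside $K_\la$ and the reduction in the second paragraph is legitimate.
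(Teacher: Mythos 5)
Your proof is correct and follows essentially the same route as the paper's: the descent-set description of $\fD_{\emptyset,\mu|\nu}^{+,-}$ plus Lemma~\ref{LRset} gives the union-of-left-cells claim, and the count is converted into $|\fD_{\la|0,\mu|\nu}^{+,-}\cap\varpi_\la|$ and finished by Proposition~\ref{supersemi}. The only difference is that you spell out the two steps the paper leaves implicit (that each left cell of $K_\la$ meets $\varpi_\la$ in exactly one point, and that the condition $\la\subseteq\mathcal L(x)$ is automatic on $\varpi_\la$), and both are handled correctly.
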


\begin{proof}Since
$$\fD_{\emptyset,\mu|\nu}^{+,-}=\{w\in\fS_r\mid
\sR(w)\supseteq\mu, \sR(w)\cap\nu=\emptyset\}=\{w\in
(\fD_\mu^+)^{-1}\mid \sR(w)\cap\nu=\emptyset\},$$ and
$(\fD_\mu^+)^{-1}$ is a union of left cells $\kappa$ satisfying
$\sR(\kappa)\supseteq\mu$, it follows that
$\fD_{\emptyset,\mu|\nu}^{+,-}$ is a union of left cells $\kappa$ in
$(\fD_\mu^+)^{-1}$ satisfying $\sR(\kappa)\cap\nu=\emptyset$. Hence,
by Proposition~\ref{supersemi},
$$m_{\la,\mu|\nu}=|
\fD_{\emptyset,\mu|\nu}^{+,-}\cap K_\la\cap\varpi_\la|=|\fD_{\la,\mu|\nu}^{+,-}\cap\varpi_\la|
=|\bfT^{sss}(\la,\mu|\nu)|,$$ as required.
\end{proof}

Assume $\mu|\nu\in\Lambda(m|n,r)$. For $T\in\bfT^{ss}(\la,\mu^*)$,
replacing $\nu_1$ entries $m+1,\ldots,m+\nu_1$ of $T$ by $m+1$,
$\nu_2$ entries $m+\nu_1+1,\ldots,m+\nu_1+1+\nu_2$ by $m+2$, and so
on, yields a $\la$-tableau $T^\diamond$ of type $\mu\vee\nu$, which
may not be  in $\bfT^{sss}(\la,\mu|\nu)$. Let
$$\bfT^{ss}(\la,\mu^*)^\diamond=\{T\in \bfT^{ss}(\la,\mu^*):
T^\diamond\in\bfT^{sss}(\la,\mu|\nu)\}.$$ Thus, we may identify
$\bfT^{sss}(\la,\mu|\nu)$ as the subset
$\bfT^{ss}(\la,\mu^*)^\diamond$ of $\bfT^{ss}(\la,\mu^*)$. This
identification is compatible with the inclusion
$\fD_{\la,\mu|\nu}^{+,-}\cap\varpi_\la\subseteq\fD_{\la,\mu^*}^+\cap\varpi_\la$.

We are now ready to describe RKS super-correspondence.

Suppose $w\in \mathfrak D_{\lambda|\mu, \xi|\eta}^{+,-}$. Let
$(P(w), Q(w))=(\ss, \ts)$ be the image of $w$ under the
Robinson-Schensted map, i.~e., $w\RS(\ss, \ts)$. Let $\nu^t$ be the shape of $\ss$ where
$\nu^t$ is the partition dual to $\nu$.
Define $x, y\in \mathfrak
S_r$ such that $P(x^{-1})=\ss$, $Q(x^{-1})=\ts_{\nu^t}$,
$P(y)=\ts_{\nu^t}$ and $Q(y)=\ts$.
 Since $P(w_{0,\nu})=Q(w_{0,\nu})=\ts_{\nu^t}$, by Theorem~\ref{RS}, $$w_{0,\nu}\sim_L x^{-1}\sim_R w\quad \text{ and }\quad  w_{0,\nu}\sim_R
y\sim_L w.$$
Thus, by Lemma \ref{LRset}, $\sR(x)=\sL(w)$, $\sR(y)=\sR(w)$ and $\sL(x)=\sL(y)=\nu$. This implies that
$x\in \mathfrak D_{\!\nu|0, \lambda|\mu}^{+, -}\cap
\varpi_\nu$ and $y\in \mathfrak D_{\!\nu|0, \xi|\eta}^{+, -}\cap
\varpi_\nu$, where $\varpi_\nu$ is the right cell of $\mathfrak S_r$
which contains $w_{0, \nu}$. By Proposition~\ref{supersemi}, there
is a pair of semi-standard $\nu$-tableaux $(\ssS_w, \ssT_w)\in
\bfT^{sss}(\nu, \lambda|\mu)\times \bfT^{sss} (\nu, \xi|\eta)$,
 which are determined uniquely by $x$ and $y$, respectively.
In particular, $\nu\in \Lambda^+(r)_{m|n}$. Thus, we obtain a
map
\begin{equation}\label{genrs}
\partial=\partial_{\lambda|\mu, \xi|\eta}^{+,-}:\mathfrak D_{\lambda|\mu, \xi|\eta}^{+, -}\longrightarrow
\!\!\bigcup_{\nu\in \Lambda^+(r)_{m|n}
 }  \bfT^{sss}(\nu, \lambda|\mu)\times \bfT^{sss}(\nu,
\xi|\eta),\,\, w\longmapsto (\ssS_w, \ssT_w).
\end{equation}
The symmetry of the Robinson-Schensted correspondence implies that the map $\partial$ satisfies a similar property:
$$\partial(w)=(\ssS_w, \ssT_w)\implies\partial(w^{-1})=(\ssT_w,\ssS_w).$$

\begin{thm}\label{RSKs} The maps $\partial_{\lambda|\mu, \xi|\eta}^{+,-}$,
 for any $\lambda|\mu, \xi|\eta\in \Lambda(m|n, r)$, are bijection which induce a bijective correspondence
$$M(m|n,r)\RSKs\bigcup_{\lambda|\mu, \xi|\eta\in \Lambda(m|n, r)\atop\nu\in \Lambda^+(r)_{m|n}
 }  \bfT^{sss}(\nu, \lambda|\mu)\times \bfT^{sss}(\nu,
\xi|\eta),\,\, A\RSKs(\ssS(A), \ssT(A)).$$
Moreover, if $A\RSKs(\ssS, \ssT)$ then $A^t\RSKs(\ssS, \ssT)$.
\end{thm}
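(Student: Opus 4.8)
The plan is to reduce the theorem to facts already established. The main task is to show that, for each fixed pair $\lambda|\mu,\xi|\eta\in\Lambda(m|n,r)$, the map $\partial=\partial_{\lambda|\mu,\xi|\eta}^{+,-}$ of \eqref{genrs} is a bijection onto $\bigcup_{\nu\in\Lambda^+(r)_{m|n}}\bfT^{sss}(\nu,\lambda|\mu)\times\bfT^{sss}(\nu,\xi|\eta)$, and I would do this by writing down the inverse explicitly. First note that, since the map $T\mapsto w_T$ introduced just before Lemma~\ref{sss} is a bijection for each fixed $w$, the assignment $\ssS\mapsto(w_{0,\nu})_{\ssS}w_{0,\lambda}$ is injective, so Proposition~\ref{supersemi} upgrades to a bijection $\bfT^{sss}(\nu,\lambda|\mu)\cong\mathfrak D_{\nu|0,\lambda|\mu}^{+,-}\cap\varpi_\nu$, written $\ssS\mapsto x$, and likewise $\bfT^{sss}(\nu,\xi|\eta)\cong\mathfrak D_{\nu|0,\xi|\eta}^{+,-}\cap\varpi_\nu$, written $\ssT\mapsto y$.

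Now, given $\nu\in\Lambda^+(r)_{m|n}$ and a pair $(\ssS,\ssT)$, let $x,y$ be as above and put $\ss:=P(x^{-1})$, $\ts:=Q(y)$. Since $x,y\in\varpi_\nu$, Theorem~\ref{RS}(2) together with $P(w_{0,\nu})=\ts_{\nu^t}$ gives $P(x)=P(y)=\ts_{\nu^t}$, so $\ss$ and $\ts$ both have shape $\nu^t$ and there is a unique $w\in\mathfrak S_r$ with $(P(w),Q(w))=(\ss,\ts)$; this $w$ is the candidate preimage of $(\ssS,\ssT)$. The crucial point is that $w\in\mathfrak D_{\lambda|\mu,\xi|\eta}^{+,-}$. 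Indeed, $P(w)=P(x^{-1})$ gives $w\sim_R x^{-1}$ by Theorem~\ref{RS}(2), hence $w^{-1}\sim_L x$, so $\sL(w)=\sR(w^{-1})=\sR(x)$ by Lemma~\ref{LRset}; as $x\in\mathfrak D_{\nu|0,\lambda|\mu}^{+,-}$, the conditions $xs<x$, $xt>x$ (for $s\in\lambda$, $t\in\mu$) of \eqref{double-sign} say $\sR(x)\supseteq\lambda$ and $\sR(x)\cap\mu=\emptyset$, whence $\sL(w)\supseteq\lambda$ and $\sL(w)\cap\mu=\emptyset$. Symmetrically, $Q(w)=Q(y)$ gives $w\sim_L y$, so $\sR(w)=\sR(y)\supseteq\xi$ and $\sR(w)\cap\eta=\emptyset$. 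By \eqref{double-sign} these four conditions say precisely that $w\in\mathfrak D_{\lambda|\mu,\xi|\eta}^{+,-}$. It then remains to check that $\partial$ and $(\ssS,\ssT)\mapsto w$ are mutually inverse, which is routine: the construction of $\partial$ reads off $(P(w),Q(w))=(\ss,\ts)$ and the shape $\nu^t$, hence recovers $x$, $y$, and so $(\ssS,\ssT)$; conversely, feeding $\partial(w)$ back into the recipe returns $w$ because $P(x^{-1})=P(w)$ and $Q(y)=Q(w)$ by construction of $\partial$.

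To assemble the global correspondence, I would note that the sets $\bfT^{sss}(\nu,\lambda|\mu)\times\bfT^{sss}(\nu,\xi|\eta)$ are pairwise disjoint as $(\nu,\lambda|\mu,\xi|\eta)$ varies, because $\nu$ is the common shape and $\lambda|\mu$, $\xi|\eta$ are the (super-)contents of the two tableaux. Hence the maps $\partial_{\lambda|\mu,\xi|\eta}^{+,-}$ together form a bijection from $\fJ(m|n,r)^{+,-}=\bigsqcup_{\lambda|\mu,\xi|\eta\in\Lambda(m|n,r)}\{(\lambda|\mu,w,\xi|\eta):w\in\mathfrak D_{\lambda|\mu,\xi|\eta}^{+,-}\}$ onto the right-hand side of the theorem, and precomposing with $(\jmath^{+,-})^{-1}$ of Corollary~\ref{jmath+-} yields the asserted bijection $A\RSKs(\ssS(A),\ssT(A))$, where $(\ssS(A),\ssT(A))=\partial^{+,-}_{\ro(A),\co(A)}(w_A^{+,-})$.

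For the last assertion, write $w=w_A^{+,-}$. I would first show $w_{A^t}^{+,-}=w^{-1}$: since inversion preserves length and interchanges left with right multiplication by elements of $S$, the four defining inequalities of \eqref{double-sign} for $w\in\mathfrak D_{\lambda|\mu,\xi|\eta}^{+,-}$ become exactly those for $w^{-1}\in\mathfrak D_{\xi|\eta,\lambda|\mu}^{+,-}$; and since $w^{-1}$ lies in the double coset $\mathfrak S_{\xi\vee\eta}w^{-1}\mathfrak S_{\lambda\vee\mu}$, which is the one corresponding to $A^t$ (using $\jmath(\rho,u^{-1},\nu)=B^t$ when $\jmath(\nu,u,\rho)=B$), the uniqueness in Lemma~\ref{TIP}(1$'$) forces $w^{-1}=w_{A^t}^{+,-}$. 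The symmetry $\partial(w)=(\ssS_w,\ssT_w)\Rightarrow\partial(w^{-1})=(\ssT_w,\ssS_w)$ noted just before the theorem then gives $A^t\RSKs(\ssT(A),\ssS(A))$. The step I expect to be the genuine obstacle is the descent-set bookkeeping of the second paragraph — transporting the conditions of \eqref{double-sign} across $\sim_L$ and $\sim_R$ via Lemma~\ref{LRset} so as to pin down $w\in\mathfrak D_{\lambda|\mu,\xi|\eta}^{+,-}$; everything else is a formal assembly of Proposition~\ref{supersemi}, Corollary~\ref{jmath+-}, and the already-established symmetry of $\partial$.
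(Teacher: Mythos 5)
Your proof is correct and follows essentially the same route as the paper's: both construct the inverse of $\partial$ by sending $(\ssS,\ssT)$ to $x=(w_{0,\nu})_\ssS w_{0,\lambda}$ and $y=(w_{0,\nu})_\ssT w_{0,\xi}$ via Proposition~\ref{supersemi} and then to the unique $w$ with $(P(w),Q(w))=(Q(x),Q(y))$, using Lemma~\ref{LRset} to transport the descent conditions of \eqref{double-sign}; you merely spell out the descent-set bookkeeping and the transpose argument that the paper compresses or leaves as ``clear.'' Note also that your conclusion $A^t\RSKs(\ssT,\ssS)$ is the correct (and evidently intended) reading of the final claim, since $\ro(A^t)=\co(A)$ forces the two tableaux to swap.
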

\begin{proof} By Proposition~\ref{bij} and (\ref{genrs}),  we need
only construct the inverse map $\partial^{-1}$ of
$\partial=\partial_{\lambda|\mu, \xi|\eta}^{+,-}$ for the
first assertion.
 By Proposition~\ref{supersemi}, each pair
$(\ssS,\ssT)\in \bfT^{sss}(\nu, \lambda|\mu)\times \bfT^{sss}(\nu,
\xi|\eta)$ defines two elements $x=(w_{0,\nu})_\ssS
w_{0,\la}\in\fD_{\!\nu|0, \lambda|\mu}^{+, -}\cap\varpi_\nu$ and
$y=(w_{0,\nu})_\ssT w_{0,\xi}\in\fD_{\!\nu|0, \xi|\eta}^{+,
-}\cap\varpi_\nu$. By \cite[3.2]{Du95} (cf. footnote 2),
$P(x)=P(y)=\ts_{\nu^t}$. If $w\RS(Q(x),Q(y))$, then
$x^{-1}\sim_Rw\sim_Ly$. Thus, $\sL(w)=\sR(x)$ and $\sR(w)=\sR(y)$.
Hence, $w\in \mathfrak D_{\lambda|\mu, \xi|\eta}^{+, -}$, and
$\partial^{-1}(\ssS,\ssT)=w$. The last assertion is clear.
\end{proof}

We give an example to illustrate the proof.
\begin{example}  Let $\nu=(3,3,1)$ and $m=1$ and $n=3$. Then
$\nu'=(3)$,  $\nu''=(2,1,1)$ and
$\bfT^{sss}(\nu,\nu'|\nu'')=\{\ssT\}$ with
$$\ssT=\young(111,234,2).$$ Thus, $$ w_{0,\nu}(\mathsf
t^\nu)=\young(321,654,7),\text{ and } (w_{0,\nu})_\ssT(\mathsf
t^{\nu'\vee\nu''})=\young(123,67,5,4).$$ Hence,
$x=y=(w_{0,\nu})_\ssT
w_{0,\nu'}=(3,2,1,6,7,5,4)\in\fD_{\nu|0,\nu'|\nu''}^{+,-}$ and
$Q(x)=Q(y)=\young(145,26,37)$. Hence, $\partial^{-1}(\ssT,\ssT)=w\in\fD_{\nu'|\nu'',\nu'|\nu''}^{+,-}$ where $w\RS (Q(x),Q(y))$.
\end{example}

This bijective correspondence is called the
\textit{Robinson--Schensted--Knuth super-correspondence.}
We will write, for any $A\in M(m|n,r)$,
$A\RSKs(\ssS,\ssT)$ if $\ssS(A)=\ssT$ and $\ssT(A)=\ssT$.
\begin{rem} (1) This
correspondence is the super version of the correspondence given in
\cite[\S5.3]{DR98}; cf. \cite[Remark~9.26]{DDPW}. This correspondence is different from the so-called
$(m,n)$-RoSch correspondence described in \cite[2.5]{BR}.
\end{rem}


\section{Signed $q$-permutation modules and Quantum Schur superalgebras}

  The
Hecke algebra  $\mathcal H=\mathcal H(r)$ associated to the
symmetric group $\mathfrak S_r$ is an associative $\ZC$-algebra
generated by $T_i, 1\le i\le r-1$ subject to the relations (where
$\bsq=\up^2$)
\begin{equation}\label{heckea}\begin{cases}
 T_i^2=(\bsq-1)T_i+\bsq, & \text{for $1\le i\le
r-1$,}\\
T_i  T_j=  T_j  T_i, &\text{for $1\le i<j\le r-1$,} \\ T_i  T_{i+1}
 T_i= T_{i+1}  T_i
 T_{i+1}, &\text{for  $1\le i\le r-2$.}\\
\end{cases}
\end{equation}
For any commutative ring $R$ which is a $\ZC$-algebra, let $\mathcal
H_R$ be the algebra obtained by base change to $R$. Let $v,q$ be the
images of $\up,\bsq$ in $R$, respectively.

For each $\lambda|\mu\in \Lambda(m|n, r)$, define
$$x_\lambda=\sum_{w\in\mathfrak S_{\la^*}} T_w, \quad
y_\mu=\sum_{w\in \mathfrak S_{{}^*\!\mu}} (-q)^{-l(w)}T_w$$ where
$l(w)$ is the length of $w$. The $\sH$-module $x_\la\sH$ is called
a $q$-permutation module. We call $x_\la y_\mu\sH$ a {\it signed
$q$-permutation module}. These modules share certain nice properties with
$q$-permutation modules; cf. e.g., \cite[\S7.6]{DDPW}. We continue to follow the notation used in \S3.
Thus, a composition $\la\models r$ is identified with the set $\fS_\la\cap S$.

\begin{Lemma}\label{ABC}
Let $\lambda|\mu\in \Lambda(m|n, r)$.
\begin{enumerate}
\item[(1)] The right $\sH_R$-module
$x_\la y_\mu\sH_R$ is free with basis $\{x_\la y_\mu
T_d\}_{d\in\fD_{\la|\mu}}$.

\item[(2)] $x_\la y_\mu\sH_R=\{h\in\sH\colon
T_{s} h=q h,  T_th=- h,\,\forall s\in \lambda^*, t\in {}^*\!\mu\}.$

\item[(3)] $(\sH_Rx_\la y_\mu)^*:=\text{\rm Hom}_R(\sH_Rx_\la y_\mu,R)\cong x_\la
y_\mu\sH_R$.
\end{enumerate}
\end{Lemma}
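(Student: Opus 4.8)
The plan is to treat the three parts in order, using the standard Hecke-algebra machinery for parabolic subgroups applied to the two-sided element $x_{\la^*}$ and the sign-idempotent-like element $y_{{}^*\!\mu}$; the key point is that $\fS_{\la|\mu}=\fS_{\la^*}\fS_{{}^*\!\mu}$ with $\fS_{\la^*}\cap\fS_{{}^*\!\mu}=\{1\}$, so that every element $w\in\fS_{\la|\mu}$ factors uniquely as $w=uv$ with $u\in\fS_{\la^*}$, $v\in\fS_{{}^*\!\mu}$, and $l(w)=l(u)+l(v)$. For part (1), I would first record the well-known fact that $x_{\la^*}T_s=qx_{\la^*}$ for $s\in\la^*$ and $T_t y_{{}^*\!\mu}=-y_{{}^*\!\mu}$ for $t\in{}^*\!\mu$ (and likewise on the other side), from the quadratic relation in \eqref{heckea}. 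Since $\fS_r=\bigsqcup_{d\in\fD_{\la|\mu}}\fS_{\la|\mu}d$ with $l(wd)=l(w)+l(d)$ for $w\in\fS_{\la|\mu}$, $d\in\fD_{\la|\mu}$, the elements $\{T_{wd}\}$ form a basis of $\sH_R$ adapted to this decomposition; applying the multiplication rules, $x_{\la^*}y_{{}^*\!\mu}T_{wd}=\pm q^{\ast}x_{\la^*}y_{{}^*\!\mu}T_d$ up to a unit scalar, so $x_\la y_\mu\sH_R$ is spanned by $\{x_\la y_\mu T_d\}_{d\in\fD_{\la|\mu}}$. Linear independence follows by expanding each $x_\la y_\mu T_d$ in the $\{T_w\}$ basis: the ``leading term'' is $T_{w_{0,\la^*}d}$ (or an analogous extremal term coming from the unique longest element of $\fS_{\la^*}$ paired with $1\in\fS_{{}^*\!\mu}$), and these leading terms are distinct for distinct $d$ because $w_{0,\la^*}d$ ranges over distinct elements as $d$ ranges over $\fD_{\la|\mu}$; a triangularity argument with respect to Bruhat order then gives freeness of rank $|\fD_{\la|\mu}|$.

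For part (2), the inclusion $\subseteq$ is immediate from the multiplication rules just quoted: any element of $x_\la y_\mu\sH_R$ is of the form $x_\la y_\mu h$, and $T_s x_\la = q x_\la$, $T_t y_\mu$ contributes the sign via $T_t x_\la y_\mu = q\,T_t y_\mu$-type manipulations—more precisely one checks $T_s(x_\la y_\mu h)=q(x_\la y_\mu h)$ and $T_t(x_\la y_\mu h)=-(x_\la y_\mu h)$ directly since $s\in\la^*$ commutes appropriately and $T_t y_\mu=-y_\mu$. For the reverse inclusion, I would take $h=\sum_{w}a_w T_w\in\sH_R$ satisfying the eigenvalue conditions and show it lies in the span of $\{x_\la y_\mu T_d\}$. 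The conditions $T_s h=qh$ for all $s\in\la^*$ force the coefficients to be constant along right $\fS_{\la^*}$-cosets up to the usual length-weighting; the conditions $T_t h=-h$ for $t\in{}^*\!\mu$ similarly pin down the behaviour along the ${}^*\!\mu$-directions with alternating signs. Decomposing the $\{T_w\}$-support of $h$ according to $\fS_r=\bigsqcup_d\fS_{\la|\mu}d$ and using the unique factorization $w=uvd$ described above, one concludes $h=\sum_d c_d\, x_\la y_\mu T_d$ for suitable $c_d\in R$, giving $h\in x_\la y_\mu\sH_R$. This coefficient bookkeeping is the step I expect to be the main obstacle—not because it is deep, but because getting the sign conventions and the $q$-powers exactly right across the two interleaved parabolic directions requires care; the cleanest route is probably to handle the $x_{\la^*}$-condition and the $y_{{}^*\!\mu}$-condition successively, first reducing to $x_{\la^*}\sH_R$ and then intersecting with the $y$-eigenspace.

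For part (3), I would use the anti-involution $\tau:\sH\to\sH$, $T_w\mapsto T_{w^{-1}}$, which turns left $\sH$-modules into right $\sH$-modules. Applying $\tau$ to the left ideal $\sH_R x_\la y_\mu$ and noting $\tau(x_\la)=x_\la$, $\tau(y_\mu)=y_\mu$ (both $x_{\la^*}$ and $y_{{}^*\!\mu}$ are fixed by $\tau$ since they are sums over parabolic subgroups with length-symmetric coefficients) gives a right $\sH_R$-module isomorphism $\sH_R x_\la y_\mu\cong x_\la y_\mu\sH_R$ after composing with the $\tau$-twist; alternatively, and perhaps more transparently, one checks that the pairing $\sH_R\times\sH_R\to R$ sending $(T_u,T_w)\mapsto \delta_{u,w^{-1}}$ (the standard symmetrizing form on $\sH$, up to normalization) is non-degenerate and restricts to a perfect pairing between $\sH_R x_\la y_\mu$ and $x_\la y_\mu\sH_R$, because these two are exact annihilators of each other's complements under the eigenvalue description in part (2). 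Either way, combined with part (1) this identifies $(\sH_R x_\la y_\mu)^*$ with $x_\la y_\mu\sH_R$ as right $\sH_R$-modules, using freeness to pass between the module and its dual. Part (3) is routine once (1) and (2) are in hand.
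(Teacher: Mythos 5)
Parts (1) and (2) of your proposal are correct and are essentially the paper's own argument: the spanning/freeness in (1) comes from the coset decomposition $\fS_r=\bigsqcup_d\fS_{\la|\mu}d$ (each $x_\la y_\mu T_d$ is supported on a single coset, so independence is immediate), and in (2) the eigenvalue conditions force the coefficient relations $f_w=f_{sw}$ and $f_{tw}=-q^{-1}f_w$, exactly as in the paper.

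Part (3) is where you have a genuine gap. Your first route (the anti-involution $\tau$) proves the wrong statement: since $\tau(x_\la)=x_\la$, $\tau(y_\mu)=y_\mu$ and $x_\la y_\mu=y_\mu x_\la$, the map $\tau$ does identify $x_\la y_\mu\sH_R$ with the $\tau$-\emph{twist} of the left module $\sH_Rx_\la y_\mu$, but a twist is not the $R$-linear dual $\text{\rm Hom}_R(\sH_Rx_\la y_\mu,R)$; passing from one to the other is precisely the content of (3), so this route is circular without further input. Your second route is the right idea (and the paper's), but it fails as written for two reasons. First, the pairing $(T_u,T_w)\mapsto\delta_{u,w^{-1}}$ is not the trace form and is not associative ($B(T_1T_s,T_s)=1$ while $B(T_1,T_sT_s)=q$), so it does not induce a map of right $\sH_R$-modules into the dual; you must use $\langle a,b\rangle=\text{\rm tr}(ab)$ with $\text{\rm tr}(\sum_wf_wT_w)=f_1$, whose values are $\langle T_u,T_w\rangle=\delta_{u,w^{-1}}\bsq^{l(u)}$. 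Second, and more seriously, a perfect pairing on $\sH_R$ does not restrict to a perfect pairing on a pair of submodules: the Gram matrix of the trace form on the bases $\{x_\la y_\mu T_u\}$ and $\{T_vx_\la y_\mu\}$ is $\delta_{u,v^{-1}}\bsq^{l(u)}P_{\fS_{\la|\mu}}(\bsq)$, whose determinant is a power of the Poincar\'e polynomial times a unit. This is not invertible in $\sZ$ (and vanishes at roots of unity), so the restricted trace pairing is \emph{not} perfect over general $R$, and your ``exact annihilators of each other's complements'' argument only works over a field where $P_{\fS_{\la|\mu}}(q)\neq0$. The missing step is to renormalize: divide the restricted pairing by $P_{\fS_{\la|\mu}}(\bsq)$ to obtain $(x_\la y_\mu T_u,T_vx_\la y_\mu)=\delta_{u,v^{-1}}\bsq^{l(u)}$, check this is still $\sH$-balanced (it is, since $P_{\fS_{\la|\mu}}(\bsq)$ is not a zero divisor in $\sZ$), and only then base-change to $R$; the Gram matrix is now a monomial matrix with unit entries, hence perfect over any $R$.
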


\begin{proof} Statement (1) is clear.
For $h=\sum_wf_wT_w\in\sH_R$, $T_{s} h=q h,  T_th=- h$ imply
$f_w=f_{sw}$ and $f_{tw}=-q^{-1}f_{w}$ for all $s\in\la$, $t\in \mu$
with $tw>w$, which force
$$h=\sum_{x\in\fS_{\la^*},y\in\fS_{^*\!\mu},d\in\fD_{\la|\mu}}(-q)^{-l(y)}f_dT_xT_yT_d
=x_\la y_\mu\sum_{d\in\fD_{\la|\mu}} f_dT_d.$$ The converse inclusion is clear,
proving (2). For (3), consider the ``trace form''
$$\langle\,\,,\,\,\rangle:\sH\times \sH\lra\sZ,\quad \langle
a,b\rangle=\text{tr}(ab),$$ where tr$(\sum_wf_wT_w)=f_1$. A direct
computation shows that $\langle x_\la y_\mu T_u,T_vx_\la
y_\mu\rangle=\delta_{u,v^{-1}}\bsq^{l(u)}P_{\fS_{\la|\mu}}(\bsq)$,
where $P_{\fS_{\la|\mu}}(\bsq)$ is the Poincare Polynomial of
$\fS_{\la|\mu}$. Thus, we obtain a perfect paring 
$$(\,\,,\,\,):x_\la y_\mu\sH\times\sH x_\la y_\mu\longrightarrow\sZ,\quad (x_\la y_\mu T_u,T_vx_\la y_\mu)=\delta_{u,v^{-1}}\bsq^{l(u)}.$$
Now base change gives the required perfect paring for the isomorphism.
\end{proof}

 For a composition $\mu\models r$, let
$\tilde\mu$ be the partition obtained by rearranging the parts of
$\mu$. If $\mu\in \Lambda(m, r_1)$ with $r_1\le r$, define
$\tilde{\mu}^*=\tilde \mu\vee (1^{r-r_1})$. Then ${\tilde\mu}^*\in
\Lambda(m+r-r_1, r)$. The following result can be considered as the
 quantum version of \cite[Lem 3]{Ser}. Recall, for $\la\vdash r$,
the Specht module $S_\la$ of $\sH$  associated with $\la$ defined as a dual cell module in Theorem~\ref{RS}.

\begin{Prop}\label{YoungRule}
For any $\mu|\nu\in\Lambda(m|n,r)$, we have
 $$
 x_\mu y_\nu\sH_{\mathbb Q(\up)}\cong \bigoplus_{\la\in\Lambda^+(r)_{m|n},\la\trianglerighteq\tilde\mu^*}
m_{\la,\mu|\nu}S_{\la,\mathbb Q(\up)}.$$ If $
\tilde\mu^*\in\Lambda^+(r)_{m|n}$, then $S_{\tilde\mu^*,\mathbb
Q(\up)}$ is a direct summand of $
 x_\mu y_\nu\sH_{\mathbb Q(\up)}$ with multiplicity $1$.
\end{Prop}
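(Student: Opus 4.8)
The plan is to compute, for each $\la\vdash r$, the multiplicity $c_\la$ of $S_{\la,\mathbb Q(\up)}$ in $x_\mu y_\nu\sH_{\mathbb Q(\up)}$ and to show $c_\la=m_{\la,\mu|\nu}$; since $\up$ is generic, $\sH_{\mathbb Q(\up)}$ is split semisimple, so $x_\mu y_\nu\sH_{\mathbb Q(\up)}\cong\bigoplus_\la c_\la S_{\la,\mathbb Q(\up)}$, and the stated range of the sum and the final claim will be read off afterwards from $m_{\la,\mu|\nu}=|\bfT^{sss}(\la,\mu|\nu)|$. I would argue on the dual side. By Lemma~\ref{ABC}(3) there is an isomorphism of right modules $x_\mu y_\nu\sH\cong(\sH x_\mu y_\nu)^*$, and by the $\tau$-image of Lemma~\ref{ABC}(2), $\sH x_\mu=\{h:hT_s=\bsq h\ \forall s\in\mu\}$ and $\sH x_\mu y_\nu=\sH x_\mu\cap\bigcap_{t\in\nu}\ker(\cdot T_t+1)$, where $\cdot T_t$ denotes right multiplication by $T_t$. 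Since $x_\mu$ is a scalar multiple of $C_{w_{0,\mu^*}}$, the right-handed form of \eqref{leftcsimple} gives $(hx_\mu)C_s=(\up+\up^{-1})hx_\mu$ for $s\in\mu$, and the usual eigenvector argument then yields $\sH x_\mu=\text{span}\{C_w:\sR(w)\supseteq\mu\}$ (equality by a dimension count against Lemma~\ref{ABC}(1)). By Lemma~\ref{LRset} the index set is a $\le_L$-downward-closed union of left cells, so $\sH x_\mu$ is a left submodule of $\sH$ and, over $\mathbb Q(\up)$, decomposes as $\bigoplus_{\kappa:\sR(\kappa)\supseteq\mu}E^\kappa$ into left cell modules.

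The crux is to cut this down by the conditions $\cdot T_t=-1$, $t\in\nu$. Each $\cdot T_t$ is a left-module endomorphism of $\sH x_\mu$, and I claim it acts on every summand $E^\kappa=\text{span}\{C_w:w\in\kappa\}$ as a scalar. Indeed, by the right-handed form of \eqref{leftcsimple} the operator $C_w\mapsto C_wT_t$ has diagonal entry $\bsq$ on $C_w$ if $t\in\sR(w)$ and $-1$ otherwise; as $\sR$ is constant on the left cell $\kappa$ (Lemma~\ref{LRset}) and $T_t$ is semisimple with spectrum in $\{\bsq,-1\}$ (from $T_t^2=(\bsq-1)T_t+\bsq$), a trace comparison forces $\cdot T_t$ to act on $E^\kappa$ as $\bsq$ when $t\in\sR(\kappa)$ and as $-1$ when $t\notin\sR(\kappa)$. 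Hence $\sH x_\mu y_\nu$ is the direct sum of exactly those $E^\kappa$ with $\sR(\kappa)\supseteq\mu$ and $\sR(\kappa)\cap\nu=\emptyset$, i.e. with $\kappa\subseteq\fD_{\emptyset,\mu|\nu}^{+,-}$, the latter being a union of left cells equal to $\{w:\sR(w)\supseteq\mu,\ \sR(w)\cap\nu=\emptyset\}$ by (the proof of) Corollary~\ref{multiplicity}. Dualizing and invoking Theorem~\ref{RS}, which gives $(E^\kappa)^*_{\mathbb Q(\up)}\cong S_{\la,\mathbb Q(\up)}$ precisely when $\kappa$ lies in the two-sided cell $K_\la$ of $w_{0,\la}$, we obtain
$$x_\mu y_\nu\sH_{\mathbb Q(\up)}\;\cong\;\bigoplus_{\kappa\,\subseteq\,\fD_{\emptyset,\mu|\nu}^{+,-}}(E^\kappa)^*_{\mathbb Q(\up)},$$
so $c_\la$ is the number of left cells in $K_\la\cap\fD_{\emptyset,\mu|\nu}^{+,-}$, which is $m_{\la,\mu|\nu}$ by Corollary~\ref{multiplicity}.

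It remains to identify the support and the leading term. By Corollary~\ref{multiplicity} and Lemma~\ref{equi}, $m_{\la,\mu|\nu}=|\bfT^{sss}(\la,\mu|\nu)|=0$ unless $\la\in\Lambda^+(r)_{m|n}$; moreover $\bfT^{sss}(\la,\mu|\nu)=\emptyset$ unless $\la\trianglerighteq\tilde\mu^*$, since in any $\ssT\in\bfT^{sss}(\la,\mu|\nu)$ the boxes filled with entries $\le m$ form a Young subdiagram $Y(\beta)$ with $\beta\subseteq\la$, $|\beta|=|\mu|$, carrying a row-semistandard tableau of content $\mu$, so $\beta\trianglerighteq\tilde\mu$ and hence $\la\trianglerighteq\tilde\mu^*$ by comparing partial sums (using $\sum_{i\le m}\la_i\ge|\beta|=|\mu|$). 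This yields the displayed decomposition. Finally, if $\tilde\mu^*\in\Lambda^+(r)_{m|n}$, then $\bfT^{sss}(\tilde\mu^*,\mu|\nu)$ has a single element: its part on entries $\le m$ is forced to have shape $\tilde\mu$, hence is the unique row-semistandard $\tilde\mu$-tableau of content $\mu$ (i.e. $|\bfT^{ss}(\tilde\mu,\mu)|=1$), while $\tilde\mu^*/\tilde\mu$ is a single column, filled in the unique weakly increasing way with content $\nu$; so $m_{\tilde\mu^*,\mu|\nu}=1$.

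I expect the middle step to be the real work: establishing that $\sH x_\mu$ is the span of the cell-theoretic set of $C_w$'s, and above all the scalar-action criterion for $\cdot T_t$ on $E^\kappa$. This cannot be sidestepped by basis manipulations, because $x_\mu y_\nu\sH$ is genuinely \emph{not} spanned by canonical basis elements — the factor $y_\nu$ twists the $C$-basis — so an abstract semisimple-submodule argument is unavoidable; and keeping the left/right and $\sR$/$\sL$ bookkeeping consistent with Corollary~\ref{multiplicity} and the conventions of Theorem~\ref{RS} is the other point demanding care.
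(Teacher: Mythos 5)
Your overall strategy (dualize via Lemma~\ref{ABC}(3), identify $\sH x_\mu$ with $\mathrm{span}\{C_w:\sR(w)\supseteq\mu\}$, cut this down to $\sH x_\mu y_\nu$, and count left cells via Corollary~\ref{multiplicity}) is sound, and your treatment of the support condition $\la\trianglerighteq\tilde\mu^*$ and of the multiplicity-one claim is fine. But the central step is broken. You assert that each $\cdot T_t$, $t\in\nu$, acts as a scalar on every summand $E^\kappa$ of one fixed decomposition of $\sH x_\mu\otimes\mathbb Q(\up)$ into irreducible left submodules. This is false in general: if some part $\nu_i\ge 3$, then $\nu$ contains adjacent simple reflections $t=s_k$, $t'=s_{k+1}$, and right multiplication by $T_tT_{t'}-T_{t'}T_t=T_{tt'}-T_{t't}$ is nonzero on $\sH x_\mu$ (it sends $x_\mu$ to $(T_{tt'}-T_{t't})x_\mu\ne 0$, as $t,t'$ commute with $\fS_{\mu^*}$ and the elements $tt'u$, $t'tu$, $u\in\fS_{\mu^*}$, are pairwise distinct). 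So the operators $\cdot T_t$, $t\in\nu$, need not commute on $\sH x_\mu$, whereas simultaneous scalar action on the summands of a common decomposition would force them to commute. Moreover, even for a single $t$ the ``trace comparison'' proves too little: the trace of $\cdot T_t$ on all of $\sH x_\mu$ only fixes the total dimensions of the two eigenspaces; it says nothing about how $\cdot T_t$ distributes its eigenvalues among the $c_\la$ copies of a given irreducible inside one isotypic component, where it acts as an arbitrary diagonalizable matrix on the multiplicity space, a priori with mixed spectrum. Hence the identification of $\sH x_\mu y_\nu$ with $\bigoplus_{\kappa\subseteq\fD^{+,-}_{\emptyset,\mu|\nu}}E^\kappa$ does not follow from what you wrote.

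Your closing remark that the argument ``cannot be sidestepped by basis manipulations'' points in exactly the wrong direction: the basis manipulation is what works, and it is the paper's proof. One checks that $C_wy_\nu=0$ whenever $\sR(w)\cap\nu\ne\emptyset$ (if $wt<w$ then $C_w\in\sH C_t$ and $C_ty_\nu=\up^{-1}(1+T_t)y_\nu=0$), and that the surviving elements $C_wy_\nu$ with $\sR(w)\supseteq\mu$, $\sR(w)\cap\nu=\emptyset$ form a basis of $\sH x_\mu y_\nu$. Applying $\cdot y_\nu$ to the left-cell filtration of $\sH x_\mu$ then yields a filtration of $\sH x_\mu y_\nu$ whose subquotients are precisely the left cell modules $E^\kappa$ for the left cells $\kappa\subseteq\fD^{+,-}_{\emptyset,\mu|\nu}$: in $C_sC_wy_\nu$ the lower $\leq_L$-terms either die under $y_\nu$ or lie in lower filtration pieces. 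Over $\mathbb Q(\up)$ the filtration splits, and the remainder of your argument (dualizing, Corollary~\ref{multiplicity}, Theorem~\ref{RS}) goes through verbatim. If you insist on an eigenspace formulation, the correct statement for a single $t$ is not a trace count but the identification of the $\bsq$-eigenspace of $\cdot T_t$ on $\sH x_\mu$ with $\sH x_{\mu\cup\{t\}}=\mathrm{span}\{C_w:\sR(w)\supseteq\mu\cup\{t\}\}$; iterating over several $t\in\nu$ leads you back to the filtration argument in any case.
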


\begin{proof}
Consider the basis $\{C_w\mid
\sR(w)\supseteq\mu\}$ and a left cell filtration for $\sH x_\mu$:
$$\sH x_\mu=E^\mu_0\supseteq E^\mu_1\supseteq\cdots\supseteq
E^\mu_{n_\mu-1}\supseteq E^\mu_{n_\mu}=0.$$ Since the set
$\{C_wy_\nu\mid \sR(w)\supseteq\mu\}\backslash\{0\}$ forms a basis
for $\sH x_\mu y_\nu$, this filtration induces a filtration of $\sH
x_\mu y_\nu$
$$\sH x_\mu y_\nu=E^{\mu|\nu}_0\supseteq E^{\mu|\nu}_1\supseteq\cdots\supseteq
E^{\mu|\nu}_{n_{\mu|\nu}-1}\supseteq E^{\mu|\nu}_{n_{\mu|\nu}}=0$$
with subfactors isomorphic to left cell modules. Applying Lemma
\ref{ABC} yields a dual left cell filtration for $x_\mu y_\nu\sH$:
$$0=F_{\mu|\nu}^0\subseteq F_{\mu|\nu}^1\subseteq\cdots\subseteq
F_{\mu|\nu}^{n_{\mu|\nu}}=x_\mu y_\nu\sH$$ where $F_{\mu|\nu}^j=(\sH
x_\mu y_\nu/E^{\mu|\nu}_j)^*$. The required isomorphism follows from
base change to $\mathbb Q(\up)$, Corollary \ref{multiplicity} and
Proposition \ref{RS}. The last equality follows from the fact that
$m_{{\tilde\mu}^*,\mu|\nu}=|\bfT^{sss}({\tilde\mu}^*,\mu|\nu)|=1$ if
$\tilde\mu^*\in\Lambda^+(r)_{m|n}$.
\end{proof}
\begin{rem} When
$\mu|\nu\in \Lambda(m|n, r)$ with $r-|\mu|>n$,
$S^{\tilde\mu^*}_{\mathbb Q(\up)}$ is not  a direct summand of $
 x_\mu y_\nu\sH_{\mathbb Q(\up)}$. However,  when $\mu\in\Lambda^+(r)_{m|n}$,
$S^\mu_{\mathbb Q(\up)}$ is a direct summand of
$x_{\mu'}y_{\mu''}\sH_{\mathbb Q(\up)}$ with multiplicity $1$ since
$|\bfT^{sss}(\mu,\mu'|\mu'')|=1$.
\end{rem}

For  $D=\jmath(\lambda|\mu,d, \xi|\eta)\in M(m|n, r)$ (see \eqref{sjmath}), we identify $D$ with the
 double coset $D=\mathfrak S_{\lambda|\mu}d\mathfrak
S_{\xi|\eta}$. Since $d\in \mathfrak D_{\lambda|\mu, \xi|\eta}\cap D$
is the shortest element in $D$, every $w\in D$ can be uniquely
written as $w=x.y.d.u.v$ with $x|y\in\fS_{\la|\mu}$ and
$u|v\in\fS_{\xi|\eta}\cap\fD_{\alpha|\beta}$, where
$\alpha=\alpha(D),\beta=\beta(D)$ are compositions of $|\xi|$ and
$|\eta|$, respectively, defined by
\begin{equation}\label{albe}
\fS_{\alpha^*}=d^{-1}\fS_{\la^*}d\cap\fS_{\xi^*}\,\,\text{ and
}\,\,\fS_{^*\!\beta}=d^{-1}\fS_{^*\!\mu}d\cap\fS_{^*\!\eta}.
\end{equation} By Lemma~\ref{TIP}, $\fS_{\alpha|\beta}=d^{-1}\fS_{\la|\mu}d\cap\fS_{\xi|\eta}=\fS_{\alpha^*}\fS_{^*\!\beta}$.
Define
\begin{equation}\label{T_D}
T_D=\sum_{u|v\in\fS_{\xi|\eta}\cap\fD_{\alpha|\beta}
}(-q)^{-\l(v)}x_\la y_\mu T_d T_uT_v.\end{equation} It is clear from
the definition that
$$T_D=x_\la y_\mu h_1=h_2x_\xi y_\eta=h_1'x_\la T_dy_\eta h_1''=h_2'x_\mu T_dy_\xi h_2''$$ for some $h_1,h_2,h_1',h_1'',h_2',h_2''\in\sH_R$.

\begin{rem} \label{TIP2}
The element $T_D$ is also defined for any $D=\jmath(\lambda|\mu,d, \xi|\eta)\in M(m+n, r)$.
However, if $d\in\fD_{\la|\mu,\xi|\eta}$ does not satisfy the two trivial intersection properties in Lemma~\ref{TIP}(3), then $T_D=0$.
\end{rem}
We will continue to make the following identification in the sequel.
\begin{equation}\label{fixed ro/co}
\aligned
M(m|n,r)_{\la|\mu,\xi|\eta}&:=\{D\in
M(m|n,r):\ro(D)=\la|\mu,\co(D)=\xi|\eta\}\\
&\,=\{D\in\mathfrak S_{\lambda|\mu}\backslash
\mathfrak S_{r}/\mathfrak
S_{\xi|\eta}:D\cap\fD_{\lambda|\mu,\xi|\eta}^{+,-}\not=\emptyset\}.\endaligned
\end{equation}

\begin{Prop} \label{curtis-gen} If $\mathcal
H_{\lambda|\mu,\xi|\eta}^{+,-}$ denotes the free $R$-submodule of
$\mathcal H_R$ spanned by $T_D$ for all $D\in M(m|n,r)_{\la|\mu,\xi|\eta}$, then
$$\begin{aligned} \mathcal H_{\lambda|\mu,\xi|\eta}^{+,-}&=x_\la y_\mu
\sH_R\cap\sH_Rx_\xi y_\eta\\&=
\{h\in \mathcal H\colon T_{s_1} h=h T_{t_1}=q h,  T_{s_2} h=h T_{t_2}=- h, \\
& \quad\,\qquad\quad \quad\forall s_1\in\lambda^*, s_2\in{}^*\!\mu, t_1\in\xi^*, t_2\in{}^*\!\eta\}.\\
\end{aligned}
$$
\end{Prop}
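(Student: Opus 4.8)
The plan is to prove the two claimed descriptions of $\mathcal H_{\lambda|\mu,\xi|\eta}^{+,-}$ by a two-sided adaptation of the argument already carried out for the one-sided module in Lemma~\ref{ABC}(2), together with the basis count from \eqref{fixed ro/co} and Lemma~\ref{TIP}. First I would establish the second equality, namely that
$$
\mathcal E:=\{h\in\mathcal H_R\colon T_{s_1}h=hT_{t_1}=qh,\ T_{s_2}h=hT_{t_2}=-h,\ \forall s_1\in\lambda^*,s_2\in{}^*\!\mu,t_1\in\xi^*,t_2\in{}^*\!\eta\}
$$
coincides with $x_\la y_\mu\mathcal H_R\cap\mathcal H_Rx_\xi y_\eta$. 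By Lemma~\ref{ABC}(2) applied on the left, the conditions $T_{s_1}h=qh$ for $s_1\in\lambda^*$ and $T_{s_2}h=-h$ for $s_2\in{}^*\!\mu$ are together equivalent to $h\in x_\la y_\mu\mathcal H_R$; and by the analogous statement obtained by applying $\tau$ (the anti-involution $T_w\mapsto T_{w^{-1}}$, or directly the mirror computation), the conditions $hT_{t_1}=qh$, $hT_{t_2}=-h$ are equivalent to $h\in\mathcal H_Rx_\xi y_\eta$. Intersecting gives $\mathcal E=x_\la y_\mu\mathcal H_R\cap\mathcal H_Rx_\xi y_\eta$ immediately, with no further computation needed.

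Next I would show $\mathcal H_{\lambda|\mu,\xi|\eta}^{+,-}\subseteq x_\la y_\mu\mathcal H_R\cap\mathcal H_Rx_\xi y_\eta$. This is precisely the content of the displayed identities
$T_D=x_\la y_\mu h_1=h_2 x_\xi y_\eta$ recorded right after \eqref{T_D}: each generator $T_D$ of the left-hand module lies in $x_\la y_\mu\mathcal H_R$ (being of the form $x_\la y_\mu h_1$) and in $\mathcal H_R x_\xi y_\eta$ (being $h_2 x_\xi y_\eta$), so the spanning set of $\mathcal H_{\lambda|\mu,\xi|\eta}^{+,-}$ sits inside the intersection, hence so does the span.

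For the reverse inclusion I would argue by a dimension/rank count, which is where the real work lies. Using Lemma~\ref{ABC}(1), $x_\la y_\mu\mathcal H_R$ is $R$-free with basis $\{x_\la y_\mu T_d\}_{d\in\fD_{\la|\mu}}$, and there is a corresponding left-module basis of $\mathcal H_Rx_\xi y_\eta$. Decomposing $\mathcal H_R$ into double cosets $\mathcal S_{\la|\mu}\backslash\mathcal S_r/\mathcal S_{\xi|\eta}$, one checks that the intersection $x_\la y_\mu\mathcal H_R\cap\mathcal H_Rx_\xi y_\eta$ is spanned by one element for each double coset $D$ that meets $\fD_{\lambda|\mu,\xi|\eta}$ and satisfies the trivial intersection property of Lemma~\ref{TIP}(3) — equivalently $D\cap\fD^{+,-}_{\lambda|\mu,\xi|\eta}\neq\emptyset$, i.e.\ $D\in M(m|n,r)_{\la|\mu,\xi|\eta}$ by \eqref{fixed ro/co} — while double cosets failing the trivial intersection property contribute nothing (a $T_t h=-h$ with $t$ of order two forces the coefficients to vanish when $\mathcal S_{\la^*}\cap d\mathcal S_{{}^*\eta}d^{-1}\neq\{1\}$; this is the mechanism already used in the proof of Lemma~\ref{TIP}). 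Hence the intersection is free of rank $|M(m|n,r)_{\la|\mu,\xi|\eta}|$, which equals the rank of $\mathcal H_{\lambda|\mu,\xi|\eta}^{+,-}$; combined with the inclusion from the previous paragraph and the $R$-freeness of both sides, equality follows. The main obstacle is making the double-coset bookkeeping in this last step fully rigorous — in particular verifying that within a surviving double coset $D$ the space of elements lying in both one-sided modules is exactly one-dimensional and is spanned by $T_D$, which is essentially a refinement of the normal-form argument $h=x_\la y_\mu\sum_d f_d T_d$ from the proof of Lemma~\ref{ABC}(2) carried out simultaneously on both sides via the factorization $w=x.y.d.u.v$ and the identity $\fS_{\alpha|\beta}=d^{-1}\fS_{\la|\mu}d\cap\fS_{\xi|\eta}=\fS_{\alpha^*}\fS_{{}^*\!\beta}$ of \eqref{albe}.
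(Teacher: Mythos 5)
Your overall strategy coincides with the paper's: both come down to a coefficient analysis of $h=\sum_w a_wT_w$ across the double cosets in $\fS_{\la|\mu}\backslash\fS_r/\fS_{\xi|\eta}$. The paper handles the inclusion $\mathcal H^{+,-}_{\la|\mu,\xi|\eta}\subseteq x_\la y_\mu\sH_R\cap\sH_Rx_\xi y_\eta$ by checking the eigenvalue relations for $T_D$ (your route via the recorded factorizations $T_D=x_\la y_\mu h_1=h_2x_\xi y_\eta$ is an equally good way to see this), and handles the reverse inclusion by the relations $a_{sw}=a_w$, $a_{ws}=a_w$, $a_{tw}=-q^{-1}a_w$ from \cite[1.9]{Curtis85}: for $w=x_1x_2dy_1y_2$ with $x_1\in\fS_{\la^*}$, $x_2\in\fS_{^*\!\mu}$, $y_1\in\fS_{\xi^*}$, $y_2\in\fS_{^*\!\eta}$ one gets $a_w=a_d(-q)^{-(l(x_2)+l(y_2))}$, and comparison with \eqref{T_D} shows that $h$ restricted to each double coset is $a_d\,T_D$ (on double cosets violating Lemma~\ref{TIP}(3) the non-uniqueness of this factorization forces $a_d=0$, which is the mechanism your parenthetical about $T_th=-h$ alludes to). This is exactly the ``normal-form argument carried out simultaneously on both sides'' that you defer to at the end, so in substance your step is the same as the paper's. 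Your opening reduction --- deriving the second displayed equality from Lemma~\ref{ABC}(2) on the left and its image under the anti-involution $\tau$ on the right --- is correct and is, if anything, a cleaner packaging than the paper's, which reruns the coefficient relations on both sides.

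The one step that would fail as written is the closing inference ``the intersection is free of rank $|M(m|n,r)_{\la|\mu,\xi|\eta}|$, which equals the rank of $\mathcal H^{+,-}_{\la|\mu,\xi|\eta}$; combined with the inclusion and the $R$-freeness of both sides, equality follows.'' An inclusion of free $R$-modules of the same finite rank need not be an equality (already $2\mathbb Z\subsetneq\mathbb Z$ over $R=\mathbb Z$), so a rank count cannot substitute for the spanning argument. Fortunately it does not need to: the statement you flag as ``the main obstacle'' --- that within each surviving double coset the elements lying in both one-sided modules form the rank-one module $R\,T_D$, and that the other double cosets contribute nothing --- already exhibits any $h$ in the intersection as an $R$-linear combination of the $T_D$, which is the desired reverse inclusion with no dimension comparison at all. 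So the fix is to promote that deferred verification (which is precisely the paper's Curtis-style computation above) to the body of the proof and delete the rank count.
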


\begin{proof} When $\mu=\eta=(0)$, it is Curtis' result in
\cite{Curtis85}. In general, the proof is similar. We leave the
reader to verify $ T_{s_1} T_D=T_D T_{t_1}=q T_D,  T_{s_2} T_D=T_D
T_{t_2}=-  T_D$ for all $s_1\in\lambda^*, s_2\in{}^*\!\mu, t_1\in\xi^*, t_2\in{}^*\!\eta$.
This proves ``$\subseteq$" part of the result.

Conversely, Suppose  $h\in \mathcal H_R$ with $T_s h=q h$ for all $s\in\lambda^*$.  By \cite[1.9]{Curtis85}, we have
$a_w=a_{sw}$ for any $s\in \lambda^*$.
Similarly, we have $a_w=a_{ws}$ for any $s\in
\xi^*$. Therefore, $a_{w}=a_{y_1\cdot d\cdot y_2}$ if
$w=x_1\cdot x_2 \cdot d \cdot y_1\cdot y_2$ with $x_1\in \mathfrak
S_{\lambda^*}$, $x_2\in \mathfrak S_{^*\!\mu}$, $y_1\in \mathfrak S_{\xi^*}$ and
$y_2\in \mathfrak S_{^*\!\eta}$.
Similarly,  we have $a_{w}=-q a_{tw}$ (resp., $a_w=-qa_{wt}$) if $tw>w$ and
$t\in {}^*\mu$ (resp., $wt>w$ and $t\in {}^*\eta$). Consequently, for $w=x_1\cdot x_2 \cdot d \cdot y_1\cdot y_2$ as given above,
$a_{w}=(-q)^{-l(x_2)}a_{ d\cdot y_2}=a_d(-q)^{-(l(x_2)+l(y_2))}$. Hence, $h\in \mathcal H_{\lambda|
\mu,\xi|\eta}^{+,-}$.
\end{proof}

\begin{Defn}\label{ssch} Let $\fT(m|n,r;R)=\oplus_{\lambda|\mu\in \Lambda(m|n, r) } x_\lambda
y_\mu\mathcal H_R$ and
$$\sS(m|n, r;R):=\text{End}_{\mathcal H_R} (\fT(m|n,r;R))$$ and define a
$\mathbb Z_2$-grading by setting, for $i=0,1$,
\begin{equation}\label{grading for S}
 \sS(m|n,r)_i=\bigoplus_{{\lambda|\mu,\xi|\eta\in\Lambda(m|n,r)}
\atop {|\mu|+|\eta|\equiv i(\text{mod}2)}}\text{Hom}_{\mathcal
H_R}(x_\xi y_\eta\mathcal H_R,x_\la y_\mu\mathcal H_R).
\end{equation}
We call the $R$-algebra $\sS(m|n, r;R)$ with supermultiplication (see \eqref{supermult} below) the
\textit{quantum Schur superalgebra (or $q$-Schur superalgebras)}
over $R$. We will simply write $\sS(m|n, r)$ for $\sS(m|n, r;\sZ)$ and
$\fT(m|n,r)$ for $\fT(m|n,r;\sZ)$.\end{Defn}

 Note that there is also a $\mathbb
Z_2$-grading on $\fT(m|n,r;R)$ with
$$\fT(m|n,r;R)_0=\bigoplus_{{\lambda|\mu} \atop {|\mu|\equiv0(\text{mod}2)}}x_\lambda
y_\mu\mathcal H_R,\qquad \fT(m|n,r;R)_1=\bigoplus_{{\lambda|\mu} \atop
{|\mu|\equiv1(\text{mod}2)}}x_\lambda y_\mu\mathcal H_R.$$

\begin{rem} For the convenience of later use, our definition of $\sS(m|n, r)$ is taken
over the ring $\sZ=\mathbb Z[\up,\up^{-1}]$. However, it is clear that the quantum Schur
superalgebras is well defined over $\mathbb Z[\bsq,\bsq^{-1}]$, where $\bsq=\up^2$.
Thus, specializing $\bsq$ to $q\in R$ yields  the quantum Schur
superalgebras over $R$ (without assuming $\sqrt{q}$ exists in $R$).
\end{rem}

The quantum Schur superalgebras share some nice properties with the
quantum Schur algebras.

Recall the bijection introduced in \eqref{jmath--}. For $A=\jmath=(\lambda|\mu,d, \xi|\eta)\in M(m|n, r)$,  define $\phi_A=\phi_{\lambda|\mu,
\xi|\eta}^d\in\sS(m|n,r;R)$ by
\begin{equation}\label{standardbasis} \phi_{\lambda|\mu, \xi|\eta}^d
(x_\alpha y_\beta h) =\delta_{\xi| \eta, \alpha|\beta}
T_{\fS_{\la|\mu}d\fS_{\xi|\eta}} h,\end{equation} for all
$\alpha|\beta\in\Lambda(m|n,r)$ and $h\in\sH_R$. Clearly,
$\phi_{\lambda|\mu, \xi|\eta}^d\in\sS(m|n,r;R)_i$ if
$|\mu|+|\eta|\equiv i(\text{mod}\, 2)$.

\begin{thm}\label{base change} For any commutative ring $R$ which is a $\sZ$-module, the set
$\{\phi_A\mid A\in M(m|n,r)\}$ forms an $R$-basis for $\sS(m|n,r;R)$. In particular,
$\sS(m|n,r;R)\cong\sS(m|n,r)\otimes R$ has rank
 $$|M(m|n,r)|=\sum_{k=0}^r \begin{pmatrix} m^2+n^2+k-1\\k\\\end{pmatrix} \begin{pmatrix} 2mn\\r-k\\\end{pmatrix}. $$
 Moreover, there is an algebra anti-involution
$\tau:\sS(m|n,r;R)\to\sS(m|n,r;R)$ satisfying
$\tau(\phi_A)=\phi_{A^T}$, where $A^T$ denotes the transpose of $A$.
\end{thm}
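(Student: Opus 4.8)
The plan is to follow the classical template for $q$-Schur algebras (as in \cite[\S7.7]{DDPW} or \cite{Curtis85}), adapting it to the signed/super setting using the machinery already assembled. The starting point is that $\fT(m|n,r;R)=\bigoplus_{\xi|\eta}x_\xi y_\eta\sH_R$ is a free $R$-module with basis given by Lemma~\ref{ABC}(1), so an $\sH_R$-module homomorphism from one summand $x_\xi y_\eta\sH_R$ to another summand $x_\la y_\mu\sH_R$ is determined by the image of the generator $x_\xi y_\eta$, which must lie in $\{h\in x_\la y_\mu\sH_R : T_{t_1}h=qh,\ T_{t_2}h=-h,\ \forall t_1\in\xi^*,t_2\in{}^*\!\eta\}$. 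By Proposition~\ref{curtis-gen}, this space is precisely $\sH_{\la|\mu,\xi|\eta}^{+,-}$, which is free on $\{T_D : D\in M(m|n,r)_{\la|\mu,\xi|\eta}\}$. Hence $\Hom_{\sH_R}(x_\xi y_\eta\sH_R,\ x_\la y_\mu\sH_R)$ is free with basis $\{\phi_{\la|\mu,\xi|\eta}^d : \jmath(\la|\mu,d,\xi|\eta)\in M(m|n,r)_{\la|\mu,\xi|\eta}\}$, where $\phi^d_{\la|\mu,\xi|\eta}$ is the homomorphism sending $x_\xi y_\eta$ to $T_D$. Summing over all pairs $\la|\mu,\xi|\eta$ and invoking the bijection $\jmath$ of Proposition~\ref{bij} shows $\{\phi_A : A\in M(m|n,r)\}$ is an $R$-basis of $\sS(m|n,r;R)$. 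The base-change isomorphism $\sS(m|n,r;R)\cong\sS(m|n,r)\otimes R$ then follows because the structure constants of this basis lie in $\sZ$ (they are computed inside $\sH$, which is free over $\sZ$), so the canonical map $\sS(m|n,r)\otimes R\to\sS(m|n,r;R)$ sends $\sZ$-basis to $R$-basis.

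For the rank formula, I would count $|M(m|n,r)|$ directly. Writing an $(m+n)\times(m+n)$ matrix $A$ as a block matrix with diagonal blocks of sizes $m\times m$ and $n\times n$ (the ``even'' part, entries in $\N$, contributing $m^2+n^2$ free coordinates) and off-diagonal blocks of total size $2mn$ (the ``odd'' part, entries in $\{0,1\}$), and letting $k$ be the sum of all even-block entries (so $r-k$ is the sum of the odd-block entries), the number of even blocks with entry-sum $k$ is the number of weak compositions of $k$ into $m^2+n^2$ parts, namely $\binom{m^2+n^2+k-1}{k}$, while the number of odd blocks with entry-sum $r-k$ (each entry $0$ or $1$) is $\binom{2mn}{r-k}$. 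Summing over $0\le k\le r$ gives the stated formula.

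For the anti-involution, recall from \S2 that $\sH$ carries the anti-involution $\tau$ with $\tau(T_w)=T_{w^{-1}}$, and note $\tau(x_\la)=x_\la$, $\tau(y_\mu)=y_\mu$. I would use the perfect pairing $(\ ,\ ):x_\la y_\mu\sH\times\sH x_\la y_\mu\to\sZ$ from Lemma~\ref{ABC}(3) together with Proposition~\ref{curtis-gen} to identify $\sS(m|n,r;R)$ with $\bigoplus_{\la|\mu,\xi|\eta}\sH^{+,-}_{\la|\mu,\xi|\eta}$ as an $R$-module, under which composition of homomorphisms corresponds to a convolution-type product in $\sH$; transporting $\tau$ through this identification gives an anti-automorphism of $\sS(m|n,r;R)$. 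Concretely, since $\tau(T_D)=\tau\bigl(\sum_{u|v}(-q)^{-l(v)}x_\la y_\mu T_dT_uT_v\bigr)=\sum_{u|v}(-q)^{-l(v)}T_{v^{-1}}T_{u^{-1}}T_{d^{-1}}x_\la y_\mu$, and since by Corollary~\ref{jmath+-} and Proposition~\ref{bij} transposing $A$ interchanges $\ro$ and $\co$ and sends $d\mapsto d^{-1}$-type data (the statement ``if $\jmath(\nu,w,\rho)=A$ then $\jmath(\rho,w^{-1},\nu)=A^t$'' in \S3), one checks $\tau(T_D)=T_{D^t}$ up to the correct normalization; this forces $\tau(\phi_A)=\phi_{A^T}$ on the standard basis, and by $R$-linearity $\tau$ is the desired anti-involution.

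The main obstacle I expect is the bookkeeping in the anti-involution step: verifying that the sign factors $(-q)^{-l(v)}$ in \eqref{T_D} are compatible with $\tau$ and with the reindexing $d\mapsto d^{-1}$, $\alpha|\beta\leftrightarrow$ new coset data, i.e., that $\tau$ really carries $T_D$ to $T_{D^t}$ and not merely to a scalar multiple or a sum of several $T_{D'}$. This requires a careful analysis of how the decomposition $w=x.y.d.u.v$ of \eqref{albe} behaves under inversion, using the trivial-intersection identities of Lemma~\ref{TIP}(1$'$)(2$'$) to match up the double-coset representatives on the two sides. Everything else is a routine transcription of the $q$-Schur algebra arguments of \cite{Curtis85} and \cite[Ch.~7]{DDPW} into the signed setting, made possible by Propositions~\ref{bij} and~\ref{curtis-gen}.
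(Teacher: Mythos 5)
Your proposal is correct, and for the main assertion it follows the same route as the paper: decompose $\sS(m|n,r;R)$ into the Hom spaces $\text{Hom}_{\sH_R}(x_\xi y_\eta\sH_R,x_\la y_\mu\sH_R)$, identify each with $x_\la y_\mu\sH_R\cap\sH_Rx_\xi y_\eta=\sH^{+,-}_{\la|\mu,\xi|\eta}$, and invoke Proposition~\ref{curtis-gen} together with the bijection $\jmath$ of Proposition~\ref{bij}. One small slip: the image $h$ of the generator $x_\xi y_\eta$ under a \emph{right}-module map must satisfy $hT_{t_1}=qh$ and $hT_{t_2}=-h$ (right multiplication), not $T_{t_1}h=qh$; since you immediately pass to $\sH^{+,-}_{\la|\mu,\xi|\eta}$, whose defining conditions in Proposition~\ref{curtis-gen} are stated correctly, this does not affect the argument, but the set as you wrote it is not the right one. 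Where you genuinely diverge from the paper is the rank formula: the paper specializes $\up$ to $1$ and quotes Berele--Regev \cite[Th.\ 4.18]{BR}, whereas you count $|M(m|n,r)|$ directly by splitting a matrix into its $m^2+n^2$ unrestricted ``even'' positions and its $2mn$ odd positions with $\{0,1\}$ entries; your count is correct and has the advantage of being self-contained and independent of the $v=1$ specialization. For the anti-involution the paper simply declares the rest ``clear''; your sketch via $\tau(T_w)=T_{w^{-1}}$ is the intended one, and the bookkeeping you flag resolves cleanly: $\tau(x_\la)=x_\la$, $\tau(y_\mu)=y_\mu$, so $\tau(T_D)\in\sH^{+,-}_{\xi|\eta,\la|\mu}$; since $T_D$ is supported exactly on the double coset $D$ and distinct $T_C$'s have disjoint supports, $\tau(T_D)$ must be a scalar multiple of $T_{D^t}$, and comparing the coefficient of $T_{d^{-1}}$ (equal to $1$ on both sides) gives $\tau(T_D)=T_{D^t}$ on the nose, whence $\tau(\phi_A)=\phi_{A^T}$.
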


\begin{proof} Since $\sS(m|n,r)=\bigoplus_{\lambda|\mu,\xi|\eta}\text{Hom}_{\mathcal H_R}(x_\xi y_\eta\mathcal H_R,x_\la y_\mu\mathcal
H_R),$ and $$\text{Hom}_{\mathcal H_R}(x_\xi y_\eta\mathcal
H_R,x_\la y_\mu\mathcal H_R)\cong  x_\la y_\mu\sH_R \cap \sH_Rx_\xi
y_\eta$$ as $R$-modules, the first assertion follows from
Proposition \ref{curtis-gen}. The rank assertion follows from a base change to a field by specializing $\up$ to 1 and
\cite[Th.4.18]{BR}.
The rest of the proof is clear.
\end{proof}
For  $A=\jmath(\lambda|\mu, d, \xi|\eta)$, by the ${\mathbb Z}_2$-grading \eqref{grading for S},
set $\hat A=|\mu|+|\eta|(\text{mod}\,2)$. Then the
supermultiplication is given by
\begin{equation}\label{supermult}
\phi_A\phi_B=(-1)^{\hat A\hat B}\phi_A\circ\phi_B,\quad\text{ for
all }A,B\in M(m|n,r).
\end{equation}
It is clear that the associativity holds with respect to the
supermultiplication. Moreover, it is clear $\phi_A T(m|n,r;R)_i\subseteq
T(m|n,r;R)_{\hat A+i}$ for all $A$. Hence, $T(m|n,r;R)$ is an
$\sS(m|n,r;R)$-supermodule.

\section{Canonical bases for quantum Schur superalgebras}

We now introduce canonical bases for quantum Schur superalgebras.
Recall the Kszhdan--Lusztig bases $\{C_w\}$ and $\{B_w\}$ for the Hecke algebra $\sH=\sH(\fS_r)$.

 For $D,D'\in M(m|n,r)_{\la|\mu,\xi|\eta}$ regarded as double cosets as in \eqref{fixed ro/co},
 let $w^+_D$ (resp. $w^-_D$) be the longest (resp., shortest) element in
$D$ and define
$$D\leq D'\text{ if and only if }w^+_D\leq w^+_{D'},$$
which is equivalent to $w^-_D\leq w^-_{D'}$; see, e.g., \cite[Lem.~4.35]{DDPW}.
Clearly,
$$l(w^+_D)=l(w_{0,\la})+l(w_{0,\mu})+l(d)+l(w_{0,\xi})-l(w_{0,\alpha})
+l(w_{0,\eta})-l(w_{0,\beta}),$$ where $d\in
D\cap\fD_{\la|\mu,\xi|\eta}$, and $\alpha,\beta$ are defined as in
\eqref{albe}. For $d\in \fD_{\la|\mu,\xi|\eta}$, let $d^*,{}^*\!d$ be defined as in Lemma~\ref{TIP} and
\begin{equation}\label{sT_D}
\sT_D=\up^{-l(d^*)}\up^{l({}^*\!d)-l(d)}T_D.
\end{equation}
If $D=\fS_{\la|\mu}$, then
$\sT_D=\up^{-l(w_{0,\la})}\up^{l(w_{0,\mu})}T_{\fS_{\la|\mu}}=(-1)^{l(w_{0,\mu})}
C_{w_{0,\la^*}}B_{w_{0,{}^*\!\mu}}$. Note that $l(w^+_D)=l(d^*)+l({}^*\!d)-\l(d)$.

\begin{Lemma} The restriction of the bar involution $^-$ on $\sH$ induces a bar involution
$^-$ on $\mathcal H_{\lambda|\mu,\xi|\eta}^{+,-}$. Moreover, for
$D,C\in M(m|n,r)_{\lambda|\mu,\xi|\eta}$, there exist
$r_{C,D}\in\sZ$ such that $r_{D,D}=1$ and
\begin{equation}\label{rcd}\overline{\sT}_D=\sum_{{C\in
M(m|n,r)_{\lambda|\mu,\xi|\eta}}\atop{C\leq
D}}r_{C,D}\sT_{C}.\end{equation}
\end{Lemma}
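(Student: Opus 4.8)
The plan is to treat the two assertions separately. For the first, recall from Proposition~\ref{curtis-gen} that $h\in\mathcal H_{\lambda|\mu,\xi|\eta}^{+,-}$ precisely when $T_{s_1}h=hT_{t_1}=\bsq h$ and $T_{s_2}h=hT_{t_2}=-h$ for all $s_1\in\lambda^*$, $s_2\in{}^*\!\mu$, $t_1\in\xi^*$, $t_2\in{}^*\!\eta$. Since $^-$ is a ring automorphism of $\sH$ with $\overline{\bsq}=\bsq^{-1}$ and $\overline{T_s}=T_s^{-1}$, I would apply $^-$ to each of these relations; the equivalences $T_s^{-1}\overline h=\bsq^{-1}\overline h\Leftrightarrow T_s\overline h=\bsq\overline h$ and $T_s^{-1}\overline h=-\overline h\Leftrightarrow T_s\overline h=-\overline h$, together with the right-handed versions, then show that $\overline h$ satisfies the same relations, so $\overline{\mathcal H_{\lambda|\mu,\xi|\eta}^{+,-}}\subseteq\mathcal H_{\lambda|\mu,\xi|\eta}^{+,-}$. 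Since $^-$ is an involution on $\sH$, its restriction to this subspace is an involution.

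For the triangularity statement, observe that $\{T_C\}_{C\in M(m|n,r)_{\lambda|\mu,\xi|\eta}}$ is an $\sZ$-basis of $\mathcal H_{\lambda|\mu,\xi|\eta}^{+,-}$ by Proposition~\ref{curtis-gen}, hence so is $\{\sT_C\}$ (it differs from $\{T_C\}$ by units of $\sZ$); by the first part we may therefore write $\overline{\sT}_D=\sum_C r_{C,D}\sT_C$ with uniquely determined $r_{C,D}\in\sZ$, and the task is to show $r_{C,D}=0$ for $C\not\le D$ and $r_{D,D}=1$. I would expand everything in the standard basis $\{T_w\}_{w\in\fS_r}$ of $\sH$ and isolate two facts about $\sT_C$. \emph{(i)} $\sT_C$ is supported on the double coset $C$: by \eqref{T_D}, $T_C$ is a $\sZ$-combination of elements $x_\lambda y_\mu T_dT_uT_v=x_\lambda y_\mu T_{duv}$ with $d\in C\cap\fD_{\lambda|\mu,\xi|\eta}$, and each $x_\lambda y_\mu T_{duv}$ lies in the $\sZ$-span of $\{T_w\colon w\in\fS_{\lambda|\mu}\,duv\}\subseteq\{T_w\colon w\in C\}$. \emph{(ii)} the coefficient of $T_{w_C^+}$ in $\sT_C$ is $\pm\up^{-l(w_C^+)}$, in particular a nonzero element of $\sZ$: in the product \eqref{T_D} the term $T_{w_C^+}$ can arise only by combining the top term of $x_\lambda y_\mu$ with the term of \eqref{T_D} indexed by the longest $u|v$, which contributes $(-\bsq)^{-l(w_{0,\mu})-l(v_0)}$ to the coefficient of $T_{w_C^+}$ in $T_C$ (here $v_0$ is the longest $v$ occurring), and then multiplying by the normalizing power $\up^{-l(d^*)+l({}^*\!d)-l(d)}$ from \eqref{sT_D} and using the length identity for $l(w_C^+)$ recorded just above \eqref{sT_D} one computes the product to be $\pm\up^{-l(w_C^+)}$.

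Next I would invoke the Bruhat-triangularity of the bar involution on $\sH$, $\overline{T_w}=\bsq^{-l(w)}T_w+\sum_{y<w}(\ast)T_y$ with coefficients in $\sZ$. Writing $\sT_D=\sum_{w\in D}a_wT_w$, with $a_{w_D^+}=\pm\up^{-l(w_D^+)}$ by \emph{(ii)}, the element $\overline{\sT}_D=\sum_{w\in D}\overline{a_w}\,\overline{T_w}$ is supported on $\{y\in\fS_r\colon y\le w_D^+\}$, because $w_D^+$ is the Bruhat-largest element of $D$ (see, e.g., \cite[Ch.~4]{DDPW}). Now if $r_{C,D}\ne0$, then, since $T_{w_C^+}$ occurs with nonzero coefficient in $\sT_C$ by \emph{(ii)} and in no $\sT_{C'}$ with $C'\ne C$ by \emph{(i)}, comparing coefficients of $T_{w_C^+}$ in $\overline{\sT}_D=\sum_{C'}r_{C',D}\sT_{C'}$ shows that $T_{w_C^+}$ occurs in $\overline{\sT}_D$; hence $w_C^+\le w_D^+$, i.e. $C\le D$ by the definition of the order. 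Finally, comparing coefficients of $T_{w_D^+}$ on the two sides of $\overline{\sT}_D=\sum_{C'\le D}r_{C',D}\sT_{C'}$ (only $C'=D$ contributes $T_{w_D^+}$) yields $r_{D,D}\,a_{w_D^+}=\overline{a_{w_D^+}}\,\bsq^{-l(w_D^+)}=a_{w_D^+}$, the last equality because $a_{w_D^+}=\pm\up^{-l(w_D^+)}$; as $\sZ$ is a domain this forces $r_{D,D}=1$.

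I expect the one genuinely computational point to be \emph{(ii)}: verifying that the normalizing exponent $-l(d^*)+l({}^*\!d)-l(d)$ in \eqref{sT_D}, together with the powers of $-\bsq$ produced by $y_\mu$ and by the sum over $u|v$ in \eqref{T_D}, combine to give exactly $\pm\up^{-l(w_C^+)}$. This rests on the length-additive factorization of the longest element $w_C^+$ of the double coset $C$ through $\fS_{\lambda|\mu}$, $d$, and $\fS_{\xi|\eta}\cap\fD_{\alpha|\beta}$, and on the identity for $l(w_C^+)$ recorded above \eqref{sT_D}; granting these, everything else follows formally from the Bruhat-triangularity of $^-$.
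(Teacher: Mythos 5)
Your proof is correct and follows essentially the same route as the paper's: the first assertion is reduced to the characterization of $\mathcal H_{\lambda|\mu,\xi|\eta}^{+,-}$ in Proposition~\ref{curtis-gen}, and the triangularity together with $r_{D,D}=1$ is obtained by comparing leading coefficients against the Bruhat-triangular expansion of $\overline{T_w}$. The only difference is where that comparison is made: you work in the $\{T_w\}$-basis via the (correct) identity that $T_{w_C^+}$ occurs in $\sT_C$ with coefficient $\pm\up^{-l(w_C^+)}$, whereas the paper compares the coefficient of $x_\la y_\mu T_dT_{w_{0,\xi}w_{0,\alpha}}T_{w_{0,\eta}w_{0,\beta}}$ in the basis $\{x_\la y_\mu T_z\}_{z\in\fD_{\la|\mu}}$ of Lemma~\ref{ABC}(1); the two computations are equivalent.
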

\begin{proof} Since $\bar x_\la=\bsq^{-l(w_{0,\la})}x_\la$ and $\bar y_\mu=
\bsq^{l(w_{0,\mu})}y_\mu$, it follows that $\overline{\sT}_D\in
\mathcal H_{\lambda|\mu,\xi|\eta}^{+,-}$. By Proposition
\ref{curtis-gen}, the restriction yields a bar involution on
$\mathcal H_{\lambda|\mu,\xi|\eta}^{+,-}$. On the other hand, since
\begin{equation}\label{rcd1}
\overline{\sT}_D=\up^{l(d^*)}\up^{-l({}^*\!d)+l(d)}\sum_{u|v\in\fS_{\xi|\eta}\cap\fD_{\alpha|\beta}}
(-\bsq)^{l(v)}\bar x_\la\bar y_\mu \bar T_d\bar T_u\bar T_v,
\end{equation}
and $T_s^{-1}=\bsq^{-1}T_s+(\bsq^{-1}-1)$, Proposition~\ref{curtis-gen} implies that
$\overline{\sT}_D$ can be written as in (\ref{rcd}).  It remains to prove that $r_{D,D}=1$. We write
$\overline{\sT}_D$ as a linear combination of $x_\la y_\mu T_z$,
$z\in\fD_{\la|\mu}$. By \eqref{rcd}, as the leading term of $\sT_D$, $x_\la y_\mu
T_dT_{w_{0,\xi}w_{0,\alpha}}T_{w_{0,\eta}w_{0,\beta}}$ has coefficient
$r_{D,D}\up^{-l(d^*)}\up^{l({}^*\!d)-l(d)}(-\bsq)^{-l(w_{0,\eta}w_{0,\beta})}$.
On the other hand, since
$$\aligned
& (-\bsq)^{l(w_{0,\eta}w_{0,\beta})}\bar x_\la\bar y_\mu \bar
T_d\bar T_{w_{0,\xi}w_{0,\alpha}}\bar
T_{w_{0,\eta}w_{0,\beta}}\\
=&\bsq^{-l(d^*)}\bsq^{l({}^*\!d)-l(d)}(-\bsq)^{-l(w_{0,\eta}w_{0,\beta})}
x_\la y_\mu  T_d T_{w_{0,\xi}w_{0,\alpha}}
T_{w_{0,\eta}w_{0,\beta}}+\text{lower
terms},\\
\endaligned$$
the same coefficient is equal by \eqref{rcd1} to $\up^{l(d^*)}\up^{-l({}^*\!d)+l(d)}\bsq^{-l(d^*)}\bsq^{l({}^*\!d)-l(d)}(-\bsq)^{-l(w_{0,\eta}w_{0,\beta})}$.
Hence, $r_{D,D}=1$.
\end{proof}

 By this lemma, a standard construction (see, e.g., \cite[\S0.5]{DDPW})
 gives the following.
\begin{Prop} There exists a unique $\sZ$-basis $\{\sfC_D\}_{D\in M(m|n,r)_{\lambda|\mu,\xi|\eta}}$
for $\mathcal H_{\lambda|\mu,\xi|\eta}^{+,-}$ such that
$\bar\sfC_D=\sfC_D$ and $\sfC_D=\sum_{C\leq D}p_{C,D}\sT_{C}$,
where $p_{D,D}=1$ and $p_{C,D}\in\up^{-1}{\mathbb Z}[\up^{-1}]$ if
$C<D$. Moreover, if $D=\fS_{\la|\mu}$, then
$\sfC_D=\sT_{\fS_{\la|\mu}}$.
\end{Prop}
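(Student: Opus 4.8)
The plan is to invoke the standard Kazhdan--Lusztig-type existence argument (the abstract lemma on bases of free modules over $\sZ$ equipped with a bar involution and a poset-compatible transition matrix, as in \cite[\S0.5]{DDPW}), once we have verified that the preceding lemma provides exactly the input that argument requires. Concretely, the preceding lemma tells us that $\mathcal H_{\lambda|\mu,\xi|\eta}^{+,-}$ is a free $\sZ$-module with distinguished basis $\{\sT_D\}_{D\in M(m|n,r)_{\lambda|\mu,\xi|\eta}}$, that $^-$ is a well-defined involution fixing the ground ring's involution $\up\mapsto\up^{-1}$, that $\leq$ is a partial order on the index set $M(m|n,r)_{\lambda|\mu,\xi|\eta}$ with finitely many elements below any given one (indeed it is finite), and that $\overline{\sT}_D=\sum_{C\leq D} r_{C,D}\sT_C$ with $r_{D,D}=1$. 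These are precisely the hypotheses of the abstract existence-and-uniqueness theorem, so the existence of a unique bar-invariant basis $\{\sfC_D\}$ with $\sfC_D=\sum_{C\leq D} p_{C,D}\sT_C$, $p_{D,D}=1$, $p_{C,D}\in\up^{-1}\mathbb Z[\up^{-1}]$ for $C<D$, follows immediately.

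For completeness I would still indicate the mechanism rather than merely cite it: one defines the $p_{C,D}$ by downward induction on the interval $[C,D]$ in the poset. The bar-invariance condition $\overline{\sfC}_D=\sfC_D$ together with $\overline{\sT}_C=\sum_{B\leq C} r_{B,C}\sT_B$ translates into the recursion $p_{C,D}-\overline{p_{C,D}} = \sum_{C<B\leq D} \overline{p_{B,D}}\, r_{C,B}$; since the right-hand side is a Laurent polynomial that is anti-invariant under $\up\mapsto\up^{-1}$ (by the induction hypothesis and the bar-invariance of the partial sums, using $r_{D,D}=1$ to start the induction at $C=D$ with $p_{D,D}=1$), it lies in $\bigoplus_{k\neq 0}\mathbb Z\up^k$, and hence there is a unique way to split it as $f-\bar f$ with $f\in\up^{-1}\mathbb Z[\up^{-1}]$; we set $p_{C,D}:=f$. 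This produces the $p_{C,D}$ with the stated integrality and normalization, and uniqueness of $\{\sfC_D\}$ follows from the uniqueness at each step.

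Finally, for the last sentence of the proposition: when $D=\fS_{\la|\mu}$, it is the unique minimal element of its own principal lower set (there is nothing strictly below it in $M(m|n,r)_{\la|\mu,\la|\mu}$ relevant to the expansion, since $w^+_D$ is minimal), so the sum $\sfC_D=\sum_{C\leq D} p_{C,D}\sT_C$ has only the term $C=D$, giving $\sfC_D=\sT_D=\sT_{\fS_{\la|\mu}}$; alternatively, the preceding lemma already records that $\overline{\sT}_{\fS_{\la|\mu}}=\sT_{\fS_{\la|\mu}}$ (it equals $(-1)^{l(w_{0,\mu})}C_{w_{0,\la^*}}B_{w_{0,{}^*\!\mu}}$, a product of bar-invariant Kazhdan--Lusztig basis elements), so $\sT_{\fS_{\la|\mu}}$ is itself bar-invariant and of the required triangular form, hence equals $\sfC_D$ by uniqueness.

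I do not anticipate a genuine obstacle here: this proposition is purely the formal output of the machinery set up by the preceding lemma, and the only thing to be careful about is making sure the poset $(M(m|n,r)_{\la|\mu,\xi|\eta},\leq)$ is the one for which the transition matrix $(r_{C,D})$ is unitriangular --- which is exactly what the preceding lemma asserts --- and that the ring of ``negative part'' Laurent polynomials $\up^{-1}\mathbb Z[\up^{-1}]$ is the correct target for the $p_{C,D}$, matching the convention in the Hecke algebra case for $C_w$.
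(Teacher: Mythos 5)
Your proposal is correct and follows essentially the same route as the paper, which simply invokes the standard Kazhdan--Lusztig-type construction from \cite[\S0.5]{DDPW} on the basis of the unitriangularity $\overline{\sT}_D=\sum_{C\leq D}r_{C,D}\sT_C$ with $r_{D,D}=1$ established in the preceding lemma; your explicit recursion for the $p_{C,D}$ is just that construction spelled out. Your treatment of the case $D=\fS_{\la|\mu}$ (either via minimality of $D$ in $M(m|n,r)_{\la|\mu,\la|\mu}$ or via the bar-invariance of $\sT_{\fS_{\la|\mu}}=(-1)^{l(w_{0,\mu})}C_{w_{0,\la^*}}B_{w_{0,{}^*\!\mu}}$ recorded before the lemma) matches the paper's intent as well.
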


For any $D\in M(m|n,r)$, if we put
$$\vph_D=\up^{-l(d^*)}\up^{l({}^*\!d)-l(d)}\up^{-l(w_{0,\xi^*})+l(w_{0,{}^*\!\eta})}\phi_D,$$
where $\co(D)=\xi|\eta$ and $\phi_D$ is defined in
(\ref{standardbasis}), then
$$\vph_D(\sT_{\fS_{\alpha|\beta}})=\delta_{\co(D),\alpha|\beta}\sT_D;$$
cf. \eqref{sT_D}. We now have the following.

\begin{thm}\label{canonical} The bar involution $^-:\sZ\to\sZ$ can be extended to a ring homomorphism
$^-:\sS(m|n,r)\to\sS(m|n,r)$ defined by
linearly extending the action:
$$\bar\vph_D=\sum_{C}r_{C,D}\vph_{C},$$
where the scalars $r_{C,D}$'s are defined in (\ref{rcd}). In
particular, there is a unique basis $\{\Theta_D\}_{D\in M(m|n,r)}$
satisfying
$$\bar\Theta_D=\Theta_D,\,\,\,\Theta_D-\vph_D\in\sum_{C<D}\up^{-1}{\mathbb Z}[\up^{-1}]\vph_{C}.$$
\end{thm}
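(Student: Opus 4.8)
The plan is to realise the bar map asserted on $\sS(m|n,r)$ as conjugation by a bar involution of the defining module $\fT(m|n,r)$; with such a realisation the ring-homomorphism property becomes essentially formal, and the production of the basis $\{\Theta_D\}$ is then the familiar Kazhdan--Lusztig triangularity argument.

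First I would note that each signed $q$-permutation module $x_\la y_\mu\sH$ is stable under the bar involution $^-$ of $\sH$: one has $\overline{x_\la y_\mu}=\bsq^{-l(w_{0,\la})+l(w_{0,\mu})}x_\la y_\mu$ (the identities $\bar x_\la=\bsq^{-l(w_{0,\la})}x_\la$ and $\bar y_\mu=\bsq^{l(w_{0,\mu})}y_\mu$ being exactly those used to establish \eqref{rcd}), and $\bsq$ is a unit of $\sZ$. Hence $^-$ restricts, summand by summand, to an additive involution $d$ of $\fT(m|n,r)=\bigoplus_{\la|\mu}x_\la y_\mu\sH$ which is semilinear in the module sense, $d(\xi h)=d(\xi)\bar h$ for $\xi\in\fT(m|n,r)$ and $h\in\sH$; moreover $d(\sT_{\fS_{\la|\mu}})=\sT_{\fS_{\la|\mu}}$ for every $\la|\mu$, since $\sT_{\fS_{\la|\mu}}=\up^{-l(w_{0,\la})+l(w_{0,\mu})}x_\la y_\mu$ coincides with $\sfC_{\fS_{\la|\mu}}$ and is therefore bar-invariant (cf. \eqref{sT_D}). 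For $\phi\in\sS(m|n,r)=\text{End}_{\sH}\!\big(\fT(m|n,r)\big)$ I then set $\overline{\phi}:=d\circ\phi\circ d$. Using $d^2=\id$ and the $\sH$-linearity of $\phi$, a short verification shows: $\overline{\phi}$ is again $\sH$-linear; it carries the summand $x_\xi y_\eta\sH$ into $x_\la y_\mu\sH$ precisely when $\phi$ does, so it preserves the $\mathbb Z_2$-grading \eqref{grading for S}; and $\phi\mapsto\overline{\phi}$ is a semilinear involution, multiplicative for ordinary composition of endomorphisms. Since it preserves the grading, the signs in the supermultiplication \eqref{supermult} match up on both sides, so $\phi\mapsto\overline{\phi}$ is equally a ring homomorphism for the super-product; it fixes $\id$ and sends $a\cdot\id\mapsto\bar a\cdot\id$, hence extends $^-$ on $\sZ$.

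Next I would check that $\overline{(\cdot)}$ acts on the standard basis through the matrix $(r_{C,D})$. From $\vph_D(\sT_{\fS_{\alpha|\beta}})=\delta_{\co(D),\alpha|\beta}\sT_D$ (see \eqref{standardbasis}) and $d(\sT_{\fS_{\alpha|\beta}})=\sT_{\fS_{\alpha|\beta}}$ one obtains
$$\overline{\vph_D}(\sT_{\fS_{\alpha|\beta}})=d\bigl(\vph_D(\sT_{\fS_{\alpha|\beta}})\bigr)=\delta_{\co(D),\alpha|\beta}\,\overline{\sT}_D=\delta_{\co(D),\alpha|\beta}\sum_{C\leq D}r_{C,D}\sT_C,$$
using \eqref{rcd}, where every $C$ occurring satisfies $\ro(C)=\ro(D)$ and $\co(C)=\co(D)$; and the right-hand side equals $\sum_C r_{C,D}\vph_C(\sT_{\fS_{\alpha|\beta}})$. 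As the elements $\sT_{\fS_{\alpha|\beta}}$ generate $\fT(m|n,r)$ over $\sH$ and both sides are $\sH$-linear, $\overline{\vph_D}=\sum_C r_{C,D}\vph_C$, which is the first assertion. (Equivalently one could take this formula as the \emph{definition} of $\overline{(\cdot)}$ and invoke the conjugation description only where multiplicativity is needed.)

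For the basis $\{\Theta_D\}$, recall from Theorem~\ref{base change} that $\{\vph_D\mid D\in M(m|n,r)\}$ is a $\sZ$-basis of $\sS(m|n,r)$, each $\vph_D$ being $\phi_D$ up to a unit of $\sZ$. Extending the order $\le$ from each block $M(m|n,r)_{\la|\mu,\xi|\eta}$ to all of $M(m|n,r)$ by declaring elements of distinct blocks incomparable, the matrix $(r_{C,D})$ is unitriangular over this finite poset, with $r_{D,D}=1$; together with the involutivity established above, this is exactly the data required by the standard construction recalled in \cite[\S0.5]{DDPW}, which then produces a unique bar-invariant $\Theta_D$ with $\Theta_D-\vph_D\in\sum_{C<D}\up^{-1}\mathbb Z[\up^{-1}]\vph_C$. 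I expect the main obstacle to be only the bookkeeping in the second step: confirming that $d$ genuinely splits along the (external) summands of $\fT(m|n,r)$, each of which is individually bar-stable, so that $d\circ\phi\circ d$ is at once $\sH$-linear and grading-preserving. Once that is in place, nothing beyond the already-proved relation \eqref{rcd} enters.
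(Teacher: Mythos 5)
Your proposal is correct, and at bottom it rests on exactly the same ingredients as the paper's proof: the triangular relation \eqref{rcd}, the fact that the bar involution of $\sH$ preserves each summand $x_\la y_\mu\sH$ and fixes $\sT_{\fS_{\la|\mu}}$, evaluation of endomorphisms on these generators, and the standard construction of \cite[\S0.5]{DDPW}. The only genuine difference is organizational: you \emph{define} $\bar\phi:=d\circ\phi\circ d$, where $d$ is the module-level bar involution of $\fT(m|n,r)$, so that multiplicativity (and involutivity, which the standard construction also needs) is formal, and you then verify that this conjugation acts on the standard basis by the matrix $(r_{C,D})$; the paper instead takes the formula $\bar\vph_D=\sum_C r_{C,D}\vph_C$ as the definition and proves multiplicativity by the hands-on computation $\bar\vph_C\circ\bar\vph_D(\sT_{\fS_{\co(D)}})=\overline{\sT_C h_D}=\overline{\vph_C\circ\vph_D}(\sT_{\fS_{\co(D)}})$, writing $\sT_D=\sT_{\fS_{\ro(D)}}h_D$ --- which is of course nothing but the statement that the bar on $\sS(m|n,r)$ is conjugation by $d$. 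Your packaging is arguably cleaner in that it makes the compatibility with the super-product transparent (conjugation by $d$ visibly preserves the $\Z_2$-grading, so the signs in \eqref{supermult} cancel on both sides), a point the paper handles with the brief remark that the bar involution preserves the grading. Your final step, extending $\le$ across blocks by declaring distinct blocks incomparable, is harmless since $r_{C,D}$ vanishes off the diagonal blocks, and the unitriangularity $r_{D,D}=1$ from \eqref{rcd} is exactly what the standard construction requires.
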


\begin{proof} We first observe that
$\bar\vph_D(\sT_{\fS_{\co(D)}})=\overline{\vph_D(\sT_{\fS_{\co(D)}})}$
and the bar involution preserves the $\mathbb Z_2$-grading. Thus, for
$C,D\in M(m|n,r)$, $\vph_C\vph_D=(-1)^{\hat C\hat
D}\vph_C\circ\vph_D$ and $\bar\vph_C\bar\vph_D=(-1)^{\hat C\hat
D}\bar\vph_C\circ\bar\vph_D$. Hence, to prove that the bar
involution is a ring homomorphism, it suffices to prove that
$\overline{\vph_C\circ\vph_D}=\bar\vph_C\circ\bar\vph_D,$ for all
$C,D$ with $\co(C)=\ro(D)$. This is clear since
$$\aligned
\bar\vph_C\circ\bar\vph_D(\sT_{\fS_{\co(D)}})&=\bar\vph_C(\overline{\sT_D})\\
&=\bar\vph_C(\bar\sT_{\fS_{\co(D)}}\,\bar h_D)\,\,\text{ where
}\sT_D=\sT_{\fS_{\co(D)}}h_D,\\
&=\overline{\sT_C}\,\bar h_D=\overline{\sT_Ch_D}\qquad\qquad\text{ since }\bar\sT_{\fS_{\co(D)}}=\sT_{\fS_{\co(D)}}\\
&=\overline{\vph_C\circ\vph_D(\sT_{\fS_{\co(D)}})}\\
&=\overline{\vph_C\circ\vph_D}(\sT_{\fS_{\co(D)}}),
\endaligned
$$
proving the first assertion. For the last assertion, the
construction of the basis is standard; see, e.g.,
\cite[\S0.5]{DDPW}.
\end{proof}
Note that $\Theta_D$ is the element satisfying
$\Theta_D(\sT_{\fS_{\co(D)}})=\sfC_D$ and, if
$D=\jmath(\la|\mu,1,\la|\mu)$, then $\Theta_D=\vph_D$ is an
idempotent.

The basis $\{\Theta_D\}$ does not seem to have a direct connection
with the canonical bases $\{C_w\}$ for Hecke algebras. However,
there is a $\mathbb Q(\up)$-basis which is defined via the
$C$-basis.

Let $y'_\mu=\up^{l(w_{0,\mu})}y_\mu=(-1)^{l(w_{0,
\mu})}B_{w_{0,\mu}}$ so that $\bar y'_\mu=y'_\mu$. For
$D=\jmath(\la|\mu,d,\xi|\eta)$, let
$$\sT'_D=y'_\mu\sT_{D^*}y'_\eta,$$
where $D^*=\fS_{\la^*}d\fS_{\xi^*}$ and
$\sT_{D^*}=\up^{-l(d^*)}\sum_{x\in D^*}T_x$. Clearly,
\begin{equation}\label{TandT'}
\sT'_D=\up^{l(w_{0,\beta})}P_{\fS_\beta}(\bsq^{-1})\sT_D,
\end{equation}
 where
$P_{\fS_\beta}(\bsq)$ is the Poincar\'e polynomial of $\fS_{\beta}$; see
\eqref{albe} for the definition of $\beta=\beta(D)$.
This is because, by the definitions of \eqref{T_D} and \eqref{sT_D} of $T_D$ and $\sT_D$,
\begin{equation}\label{T_D*}
\sT_D=\sum_{v\in\fS_{{}^*\!\eta}\cap\fD_{{}^*\!\beta}
}\up^{l({}^*\!d)-l(d)}y_\mu \sT_{D^*} (-q)^{-\l(v)}T_v=\frac {\up^{l({}^*\!d)-l(d)}}{P_{\fS_\beta}(\bsq^{-1})}y_\mu \sT_{D^*}y_\eta,
\end{equation}
and $l({}^*\!d)=l(w_{0,\mu})+l(w_{0,\eta})-l(w_{0,\beta})$.

Let
$\fH^{+,-}_{\la|\mu,\xi|\eta}$ be the $\sZ$-span of $\sT'_D$, $D\in
M(m|n,r)_{\la|\mu,\xi|\eta}$. This is a $\sZ$-submodule of
$\sH^{+,-}_{\la|\mu,\xi|\eta}$ satisfying
$$\fH^{+,-}_{\la|\mu,\xi|\eta}\otimes\mathbb Q(\up)=\sH^{+,-}_{\la|\mu,\xi|\eta}\otimes\mathbb Q(\up).$$

\begin{Prop} \label{sfC'basis}For any $C,D\in
M(m|n,r)_{\la|\mu,\xi|\eta}$, there exist $r^*_{C,D}\in\sZ$ such
that $r^*_{D,D}=1$ and
$$\bar \sT'_D=\sum_{{C\in
M(m|n,r)_{\la|\mu,\xi|\eta}}\atop{C\leq D}}r^*_{C,D}
\sT_{C}'.$$ Moreover, if $\{\sfC'_D\}_{D\in
M(m|n,r)_{\la|\mu,\xi|\eta}}$ denotes the associated canonical basis
for $\fH^{+,-}_{\la|\mu,\xi|\eta}$, then $\sfC'_D:=y'_\mu
C_{d^*}y'_\eta$, where $d^*$ is the longest element in $D^*$.
\end{Prop}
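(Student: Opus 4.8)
The plan is to reduce the statement to the corresponding $\bar{\ }$-triangularity and canonical-basis facts that we already have for $\mathcal H_{\la|\mu,\xi|\eta}^{+,-}$ via the elements $\sT_D$, using the explicit scalar relation \eqref{TandT'} between $\sT'_D$ and $\sT_D$. First I would note that, since $P_{\fS_\beta}(\bsq^{-1})$ is a \emph{unit} in $\mathbb Q(\up)$ but not in $\sZ$, one cannot literally transport bases; instead I will argue directly inside $\fH^{+,-}_{\la|\mu,\xi|\eta}$. Because $\bar y'_\mu=y'_\mu$, $\bar y'_\eta=y'_\eta$ and $\overline{\sT_{D^*}}\in \sH^{+,-}_{\la^*,\xi^*}$ lies in the span of the $\sT_{C^*}$ with $C^*\le D^*$ (ordinary Kazhdan--Lusztig triangularity for the parabolic double-coset module $x_{\la^*}\sH x_{\xi^*}$, as in \cite[\S4]{DDPW} or Curtis' result cited in the proof of Proposition~\ref{curtis-gen}), we get $\overline{\sT'_D}=y'_\mu\,\overline{\sT_{D^*}}\,y'_\eta$ expressed in the $\sT'_C$'s, with the $\sT'_D$-coefficient equal to the $\sT_{D^*}$-coefficient in $\overline{\sT_{D^*}}$. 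The latter is $1$ by the $P_{D^*,D^*}=1$ normalization built into $\sT_{D^*}=\up^{-l(d^*)}\sum_{x\in D^*}T_x$; this gives the first assertion with $r^*_{D,D}=1$ and the $r^*_{C,D}\in\sZ$ (integrality of the $r^*$'s comes from integrality of the ordinary $r$-type coefficients, exactly as in the preceding lemma). The partial order $C\le D$ on $M(m|n,r)_{\la|\mu,\xi|\eta}$ matches $C^*\le D^*$ on the corresponding parabolic double cosets because $w^-_{C^*}=w^-_C$ etc.; I would quote \cite[Lem.~4.35]{DDPW} as already done above.

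For the identification $\sfC'_D=y'_\mu C_{d^*}y'_\eta$, I would first check this element is bar-invariant: $\bar y'_\mu=y'_\mu$, $\bar y'_\eta=y'_\eta$, $\bar C_{d^*}=C_{d^*}$, so $\overline{y'_\mu C_{d^*}y'_\eta}=y'_\mu C_{d^*}y'_\eta$. It clearly lies in $\fH^{+,-}_{\la|\mu,\xi|\eta}$, since $C_{d^*}=\up^{-l(d^*)}\sum_{y\le d^*}P_{y,d^*}(\bsq)T_y$ and $y'_\mu(\sum_y P_{y,d^*}T_y)y'_\eta$ is an $\sZ$-combination of the $y'_\mu\sT_{C^*}y'_\eta=\sT'_C$ for $C^*\le d^*$ in the double-coset poset (collecting terms over double cosets; terms that are not double-coset-closed cancel against each other by the defining invariance $T_s h=qh$, $T_t h=-h$, exactly the mechanism in Lemma~\ref{ABC}(2) and Proposition~\ref{curtis-gen}). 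Finally I would verify the triangularity normalization: $y'_\mu C_{d^*}y'_\eta-\sT'_D\in\sum_{C<D}\up^{-1}\mathbb Z[\up^{-1}]\sT'_C$. This follows because $C_{d^*}-\up^{-l(d^*)}\sum_{x\in D^*}T_x$ (the ``$\sT_{D^*}$ leading term'') lies in $\sum_{y<d^*}\up^{-1}\mathbb Z[\up^{-1}]$-span of the $T_y$, and under $h\mapsto y'_\mu h y'_\eta$ the shortest element $d^*$ of $D^*$ maps onto $\sT'_D$ while strictly smaller $y$ land (after collecting into double cosets) in strictly lower $\sT'_C$; the degree bound on $P_{y,d^*}$ together with the normalizing $\up$-powers absorbed into $\sT'_C$ keeps all resulting coefficients in $\up^{-1}\mathbb Z[\up^{-1}]$. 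By uniqueness of the canonical basis (the standard argument in \cite[\S0.5]{DDPW}), $\sfC'_D=y'_\mu C_{d^*}y'_\eta$.

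The main obstacle I anticipate is the bookkeeping in the ``collecting into double cosets'' step: one must confirm that when $y'_\mu(\sum_{y\le d^*}P_{y,d^*}T_y)y'_\eta$ is rewritten on the basis $\{\sT'_C\}$, the contribution of every $y$ not of minimal length in its $\fS_{\la^*}$-$\fS_{\xi^*}$ double coset either vanishes or is folded into the coefficient of the corresponding $\sT'_C$ with the correct $\up$-power, so that (i) only $C$ with $C<D$ appear with nonzero coefficient below $\sT'_D$, and (ii) those coefficients remain in $\up^{-1}\mathbb Z[\up^{-1}]$ after the normalization exponent $\up^{l(w_{0,\beta})}P_{\fS_\beta}(\bsq^{-1})$ implicit in passing between $\sT_D$ and $\sT'_D$ (see \eqref{TandT'}, \eqref{T_D*}) is taken into account. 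This is a direct but somewhat delicate degree count; everything else is a formal consequence of bar-invariance and the uniqueness characterization of canonical bases already used above.
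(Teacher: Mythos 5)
Your proposal follows essentially the same route as the paper's proof: reduce the bar-triangularity to the ordinary (non-super) statement for $\sT_{D^*}$ and push it through $h\mapsto y'_\mu h y'_\eta$, collecting terms into super double cosets, and then identify $\sfC'_D$ with the bar-invariant element $y'_\mu C_{d^*}y'_\eta$ via the uniqueness characterization of the canonical basis, leaving the same delicate degree-count in the collection step at essentially the same level of detail as the paper does. The only slip is that you once call $d^*$ the shortest element of $D^*$ --- it is the longest, and the leading term of $C_{d^*}$ is the full normalized double-coset sum $\sT_{D^*}$, which is what is sent to $\sT'_D$.
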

\begin{proof} Let $D=\jmath(\la|\mu,d,\xi|\eta)$.
Since  $\bar
\sT_{D^*}=\sum_{B^*\in\fS_{\la^*}\backslash\fS_r/\fS_{\xi^*}}r^*_{B^*,D^*}\sT_{B^*}$,
it follows that
$$\bar \sT'_D=y'_\mu\bar \sT_{D^*}y'_\eta
=\sum_{B^*\in\fS_{\la^*}\backslash\fS_r/\fS_{\xi^*}}r^*_{B^*,D^*}y'_\mu\sT_{B^*}y'_\eta.$$
Here, $B^*\leq D^*$. In other words, if $d_{B^*}$ denotes the shortest element in $B^*$, then $d_{B}\leq d$.
By Remark~\ref{TIP2}, $y'_\mu\sT_{B^*}y'_\eta\neq0$ implies that $f(B^*):=\fS_{\la|\mu}d_{B^*}\fS_{\xi|\mu}=\fS_{\la|\mu}d_{B}\fS_{\xi|\mu}
$ for some $d_B\in\fD_{\la|\mu,\xi|\mu}^{+,-}$ and $d_B\leq d_{B^*}\leq d$, and hence, $f(B^*)\in
M(m|n,r)_{\la|\mu,\xi|\eta}$ and $f(B^*)\leq D$. Thus, if $C\in M(m|n,r)_{\la|\mu,\xi|\eta}$ and define $r^*_{C,D}=\sum_{B^*, f(B^*)=C}r^*_{B^*,D^*}$, then $r^*_{D,D}=r^*_{D^*,D^*}=1$.
This proves the first assertion.

On the other
hand, $C_{d^*}=\sT_{D^*}+\sum_{C^*<D^*}p_{C^*,D^*}\sT_{C^*}$ for
some $p_{C^*,D^*}\in\up^{-1}\mathbb Z[\up^{-1}]$. Putting $b_D=y'_\mu C_{d^*}y'_\eta$, we have
$\overline{b_D}=b_D$, and a similar argument shows that
$b_D=\sum_{C\leq D}p_{C,D}\sT_C'$ where $C\in
M(m|n,r)_{\la|\mu,\xi|\eta}$, $p_{D,D}=1$ and $p_{C,D}\in\up^{-1}\mathbb Z[\up^{-1}]$ for $C<D$. Now,
 the
uniqueness of the canonical basis forces $\sfC'_D=b_D=y'_\mu
C_{d^*}y'_\eta$, proving the last statement.
\end{proof}

 Taking bar involution on both sides of \eqref{TandT'}, we obtain the following
 relation on the entries of ``$R$-matrices'' $(r_{C,D})$ and $(r_{C,D}^*)$:
\begin{equation}\label{rr*}
r^*_{C,D}=r_{C,D}\frac{\up^{-l(w_{0,\beta(D)})}P_{\fS_{\beta(D)}}(\bsq)}{\up^{l(w_{0,\beta(C)})}
P_{\fS_{\beta(C)}}(\bsq^{-1})}\quad\text{ for all }C,D\in M(m|n,r).
\end{equation}
Thus, no obvious relation between the $\sfC$-basis and $\sfC'$-basis is seen. However, when restrict to the tensor space,
the two bases coincide.

\begin{rem}\label{om1om2}
If $m+n\geq r$, then there exist unique $\omega_1|\omega_2\in\Lambda(m|n,r)$ such that $$\omega_1\vee\omega_2=\omega:=(\underbrace{1,\ldots,1}_{r\text{ times}},0,\ldots).$$
Thus, if $\xi|\eta=\omega_1|\omega_2$,
then $\fS_\beta=\{1\}$ and hence, $\fH^{+,-}_{\la|\mu,\omega_1|\omega_2}=\sH^{+,-}_{\la|\mu,\omega_1|\omega_2}$ and
$r^*_{C,D}=r_{C,D}$. Consequently, $\sfC_D=\sfC'_D$ in this case.
Thus, if we put
$$M(m|n,r)_{\text{\rm tsp}}=\begin{cases}
\{A\in M(m|n,r)\mid \co(A)=\omega_1|\omega_2\},&\text{ if }m+n\geq r,\\
\{A\in M(m'|n',r)\mid \ro(A)\in\Lambda(m|n,r),\co(A)=\omega_1|\omega_2\},&\text{ if }m+n<r,
\end{cases}$$
where $m\leq m'$, $n\leq n'$ and $m'+n'\geq r$, then
\begin{equation}\label{CBforTS}
\{\sfC_D\mid D\in M(m|n,r)_{\text{\rm tsp}}\}=\{\sfC_D'\mid D\in M(m|n,r)_{\text{\rm tsp}}\}
\end{equation}
forms a basis for $\fT(m|n,r)$. Call it the canonical basis of $\fT(m|n,r)$.
\end{rem}

Let $\bsS(m|n,r)=\sS(m|n,r)\otimes\mathbb Q(\up)$. For every $D=\jmath(\la|\mu,d,\xi|\eta)\in
M(m|n,r)$, define
$\Theta_D'\in\sS(m|n,r)$ by setting
\begin{equation}\label{Theta'}
\Theta_D'(x'_\xi y'_\eta)=\sfC_D'=y_\mu'C_{d^*}y_\eta',
\end{equation}
where $x_\xi'=C_{w_{0,\xi}}=\up^{-l(w_{0,\xi})}x_\xi$.

\begin{Cor} The set $\{\Theta_D'\}_{D\in M(m|n,r)}$ forms a $\mathbb Q(\up)$-basis
for $\bsS(m|n,r)$.
\end{Cor}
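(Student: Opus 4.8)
The plan is to deduce this corollary directly from the bijection $D\mapsto\Theta'_D$ being well-defined and from the linear-algebra fact that a free module of finite rank equipped with a spanning set of the right cardinality has that set as a basis. First I would record that $\sS(m|n,r)$ decomposes as $\bigoplus_{\la|\mu,\xi|\eta}\mathrm{Hom}_{\sH_R}(x_\xi y_\eta\sH_R,x_\la y_\mu\sH_R)$, so after base change to $\mathbb Q(\up)$ we have $\bsS(m|n,r)=\bigoplus_{\la|\mu,\xi|\eta}\mathrm{Hom}_{\bsH}(x'_\xi y'_\eta\bsH,x'_\la y'_\mu\bsH)$, where $x'_\xi=C_{w_{0,\xi}}$ generates $x_\xi\bsH$ (a unit rescaling) and $y'_\eta$ is the rescaled $y_\eta$ as in the paragraph preceding \eqref{Theta'}. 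Since an $\sH$-module homomorphism out of $x'_\xi y'_\eta\bsH$ is determined by the image of the cyclic generator $x'_\xi y'_\eta$ — and by Lemma~\ref{ABC}(2) together with Proposition~\ref{curtis-gen} that image must lie in $\sH^{+,-}_{\la|\mu,\xi|\eta}\otimes\mathbb Q(\up)=\fH^{+,-}_{\la|\mu,\xi|\eta}\otimes\mathbb Q(\up)$ — assigning $x'_\xi y'_\eta\mapsto\sfC'_D=y'_\mu C_{d^*}y'_\eta$ for $D=\jmath(\la|\mu,d,\xi|\eta)$ does define a unique element $\Theta'_D\in\bsS(m|n,r)$, so the map $D\mapsto\Theta'_D$ is well-defined.

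Next I would argue linear independence and spanning. For fixed $\co(D)=\xi|\eta$, Proposition~\ref{sfC'basis} says $\{\sfC'_D\}$ (as $D$ ranges over $M(m|n,r)_{\la|\mu,\xi|\eta}$) is the canonical basis of $\fH^{+,-}_{\la|\mu,\xi|\eta}$, hence a $\mathbb Q(\up)$-basis of $\mathrm{Hom}_{\bsH}(x'_\xi y'_\eta\bsH,x'_\la y'_\mu\bsH)$ under the identification above. Summing over all $\la|\mu,\xi|\eta\in\Lambda(m|n,r)$, the set $\{\Theta'_D\mid D\in M(m|n,r)\}$ is a disjoint union of bases of the homogeneous summands $\mathrm{Hom}_{\bsH}(x'_\xi y'_\eta\bsH,x'_\la y'_\mu\bsH)$, and therefore a $\mathbb Q(\up)$-basis of $\bsS(m|n,r)$. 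Alternatively, and perhaps more cleanly, I would invoke Theorem~\ref{base change}: $\{\phi_D\mid D\in M(m|n,r)\}$ is an $R$-basis for $\sS(m|n,r;R)$ for any $\sZ$-algebra $R$, so $\{\phi_D\otimes1\}$ is a $\mathbb Q(\up)$-basis of $\bsS(m|n,r)$; then it suffices to check that the transition matrix from $\{\phi_D\otimes 1\}$ to $\{\Theta'_D\}$ is invertible over $\mathbb Q(\up)$. This follows because, ordering $M(m|n,r)_{\la|\mu,\xi|\eta}$ compatibly with the Bruhat-type partial order $\le$ of \S6 and noting $C_{d^*}=\sT_{D^*}+\sum_{C^*<D^*}p_{C^*,D^*}\sT_{C^*}$, the element $\sfC'_D=y'_\mu C_{d^*}y'_\eta$ expands as $\sT'_D+\sum_{C<D}(\text{scalar})\,\sT'_C$, which by \eqref{TandT'} is $\up^{l(w_{0,\beta(D)})}P_{\fS_{\beta(D)}}(\bsq^{-1})\,\sfT_D+(\text{lower terms in }\sfT_C)$; translating through $\vph_D$ (recall $\vph_D(\sfT_{\fS_{\co(D)}})=\delta\,\sfT_D$, up to the explicit power-of-$\up$ rescaling relating $\vph_D$ to $\phi_D$) shows $\Theta'_D=(\text{nonzero scalar})\,\vph_D+\sum_{C<D}(\text{scalars})\,\vph_C$, a unitriangular-up-to-units change of basis, hence invertible.

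The only point requiring genuine care is the first one: checking that the assignment in \eqref{Theta'} really lands in $\bsS(m|n,r)$, i.e. that $y'_\mu C_{d^*}y'_\eta$ is a legitimate value for an $\sH$-homomorphism out of $x'_\xi y'_\eta\bsH$. This is exactly the statement that $\sfC'_D\in\fH^{+,-}_{\la|\mu,\xi|\eta}\otimes\mathbb Q(\up)$, which is contained in $x'_\la y'_\mu\bsH\cap\bsH x'_\xi y'_\eta$ by the displayed equality preceding Proposition~\ref{sfC'basis} and by Proposition~\ref{curtis-gen}; once the target is correctly identified with the Hom-space, the well-definedness of the homomorphism is automatic from cyclicity. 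Everything else is bookkeeping with the two partial-order filtrations already set up in \S6, so I expect no further obstacle; I would present the proof as: (i) identify the Hom-summands after base change, (ii) observe $D\mapsto\Theta'_D$ is well-defined by Proposition~\ref{sfC'basis} and Proposition~\ref{curtis-gen}, (iii) conclude it is a basis since within each summand $\{\sfC'_D\}$ is already known to be one.
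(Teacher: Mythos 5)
Your proposal is correct and follows the route the paper intends: the corollary is immediate from the decomposition of $\bsS(m|n,r)$ into Hom-summands identified (via evaluation at the cyclic generator, i.e.\ Lemma~\ref{ABC} and Proposition~\ref{curtis-gen}) with $\sH^{+,-}_{\la|\mu,\xi|\eta}\otimes\mathbb Q(\up)=\fH^{+,-}_{\la|\mu,\xi|\eta}\otimes\mathbb Q(\up)$, on each of which $\{\sfC'_D\}$ is a basis by Proposition~\ref{sfC'basis}. Your alternative triangularity argument against the $\vph$-basis is also sound, since the diagonal scalars $\up^{l(w_{0,\beta(D)})}P_{\fS_{\beta(D)}}(\bsq^{-1})$ from \eqref{TandT'} are units in $\mathbb Q(\up)$.
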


We will prove by using cell theory that  this basis gives rise to all simple modules of
$\bsS(m|n,r)$  in Section~7. Such a result can be considered as a
generalization of \cite[Theorem~1.4]{KL79}.


\section{Supercells and their associated cell representations}

We now use the basis $\{\Theta_D'\}_D$ given at the end of \S6 to
construct irreducible representations of $\bsS(m|n,r)$. Recall the
map defined in \eqref{sjmath}. We also write $\lambda|\mu=\xi|\eta$
if $\lambda=\xi$ and $\mu=\eta$.

\begin{Defn}\label{CELLS} For $A,B\in M(m|n,r)$ with
$A=\jmath^{+,-}(\alpha|\beta,y,\gamma|\delta)$ and $
B=\jmath^{+,-}(\la|\mu,w,\xi|\eta)$, define
$$A\leq_L B\iff y\leq_L w\text{ and } \xi|\eta=\gamma|\delta \,\,(\text{or }\co(A)=\co(B)).$$

Define $A\leq_RB$ if $A^T\leq_LB^T$. Let $\leq_{LR}$ be the preorder
generated by $\leq_L$ and $\leq_R$. The relations give rise to three
equivalence relations $\sim_L,\sim_R$ and $\sim_{LR}$. Thus,
$A\sim_XB$ if and only if $A\leq_XB\le_XA$ for all $X\in\{L,R,LR\}$.
The corresponding equivalence classes in $M(m|n,r)$ with respect to
$\sim_L,\sim_R$ and $\sim_{LR}$ are called {\it left cells, right
cells} and {\it two-sided cells}, respectively.
\end{Defn}
In particular, for $A,B$ as above, we have
\begin{itemize}
\item[(1)] $A\sim_L B\iff y\sim_Lw\text{ and }
\xi|\eta=\gamma|\delta$;
\item[(2)] $A\sim_R B\iff y\sim_Rw\text{ and }
\la|\mu=\alpha|\beta$;
\item[(3)] $A\le_LB$ and $A\sim_{LR}B$ $\iff$ $A\sim_LB$;
\item[(4)] $A\le_RB$ and $A\sim_{LR}B$ $\iff$ $A\sim_RB$;
\end{itemize}
Statements (3) and (4) follows from the fact that if $y\le_L w$ and $y\sim_{LR}w$ then $y\sim_Lw$;
see \cite[Cor.~6.3(c)]{Lcell1}.

\begin{Lemma}\label{tri-rel} For $A,B\in M(m|n,r)$, if $\Theta'_A \Theta'_B=\sum_{C\in M(m|n,r)} f_{A,B,C} \Theta'_C$,
then $f_{A,B,C}\neq 0$ implies  $C\le_LB$ and $C\le_RA$.
\end{Lemma}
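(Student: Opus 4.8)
The plan is to reduce the multiplication of the $\Theta'$-basis elements to the multiplication of Kazhdan--Lusztig basis elements $C_w$ in the Hecke algebra, exploiting the defining relation $\Theta'_D(x'_\xi y'_\eta)=\sfC'_D=y'_\mu C_{d^*}y'_\eta$ from \eqref{Theta'}. Write $A=\jmath^{+,-}(\alpha|\beta,y,\gamma|\delta)$ and $B=\jmath^{+,-}(\la|\mu,w,\xi|\eta)$. The product $\Theta'_A\Theta'_B$ is nonzero only when $\co(A)=\ro(B)$, i.e. $\gamma|\delta=\la|\mu$, so assume this. Evaluating on $x'_\xi y'_\eta$ we get $(\Theta'_A\circ\Theta'_B)(x'_\xi y'_\eta)=\Theta'_A(\sfC'_B)=\Theta'_A(y'_\la C_{w^*}y'_\eta)$. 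Now $y'_\la C_{w^*}\in\sH x'_\la$-type submodule (more precisely lies in $\sH_R x'_{\la^*}$), so writing $y'_\la C_{w^*}=\sum_{z}a_z\, x'_\la(\text{something})$ and pushing through, one expresses $\Theta'_A(y'_\la C_{w^*}y'_\eta)$ as $y'_\alpha\bigl(\text{an element obtained from }C_{y^*}\text{ and }C_{w^*}\bigr)y'_\eta$ landing in $\fH^{+,-}_{\alpha|\beta,\xi|\eta}$. The key structural input is that multiplication on the left by $C_{y^*}$-type elements — which is what $\Theta'_A$ does on the relevant $\sH$-module after the $y'$-sandwich is accounted for — is governed, via \eqref{leftcsimple} and the transitivity of $\le_L$, by the left preorder.

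The key steps, in order: (i) record that $f_{A,B,C}\ne 0$ forces $\co(C)=\co(B)=\xi|\eta$ and $\ro(C)=\ro(A)=\alpha|\beta$ — this is immediate from the $\mathbb Z_2$-grading and the fact that $\Theta'_A\circ\Theta'_B\in\mathrm{Hom}_{\sH_R}(x_\xi y_\eta\sH_R,x_\alpha y_\beta\sH_R)$, together with Corollary~\ref{jmath+-}; (ii) expand $\sfC'_B=y'_\la C_{w^*}y'_\eta$ and apply $\Theta'_A$: since $\Theta'_A$ is the $\sH_R$-module map determined by $x'_\la y'_\mu\mapsto y'_\alpha C_{y^*}y'_\delta$ (with $\gamma|\delta=\la|\mu$), and since $C_{w^*}=\sT_{B^*}+(\text{lower})$, one obtains that $\Theta'_A(\sfC'_B)$ is a $\sZ$-combination of $\sfC'_C$ with $C$ such that the associated double-coset longest element satisfies $w^+_C\le_L w^+_B$; (iii) translate this into $C\le_L B$ using Definition~\ref{CELLS} — here one needs that $d^*\le_L w^*$ (for the double-coset representatives) implies $d\le_L w$ at the level of the minimal representatives, which follows from the standard compatibility of $\le_L$ with the bijections of Proposition~\ref{bij} and Lemma~\ref{TIP} (the $y'$-sandwich only adds an idempotent-type factor $B_{w_{0,\mu}}$ on left/right, which cannot increase the left cell); (iv) obtain $C\le_R A$ by applying the anti-involution $\tau$ of Theorem~\ref{base change}, which sends $\Theta'_D\mapsto\Theta'_{D^T}$ (up to normalization) and swaps the roles of $\le_L$ and $\le_R$, so that the computation $\Theta'_{B^T}\Theta'_{A^T}=\sum f_{A,B,C}\Theta'_{C^T}$ gives $C^T\le_L A^T$, i.e. $C\le_R A$.

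The main obstacle I expect is step (ii)--(iii): controlling precisely how the two-sided $y'$-sandwich interacts with the Kazhdan--Lusztig multiplication. Concretely, $\Theta'_A(y'_\la C_{w^*}y'_\eta)$ is not literally $C_{y^*}C_{w^*}$; one must argue that after expanding $C_{w^*}$ in the $T$-basis, applying $\Theta'_A$ termwise, and re-collecting into the $\sfC'$-basis, every $\sfC'_C$ that appears has its representative $c^*$ satisfying $c^*\le_L w^*$ in $\fS_r$ — and then descend to the super-cell order. The cleanest route is probably to work in the $\sH_R$-module $\fT(m|n,r)$ directly: $\Theta'_B$ sends the ``top'' generator to $\sfC'_B$, which lies in the $\le_L$-span (in the sense of left cell filtrations of $\sH$-modules) determined by $w^*$; then $\Theta'_A$, being an $\sH_R$-module homomorphism, preserves such filtrations, so $\Theta'_A(\sfC'_B)$ lies in the $\le_L$-span determined by $w^*$ as well. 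Extracting the $\sfC'_C$-coefficients and invoking the triangularity of $\{\sfC'_D\}$ against $\{\sT'_D\}$ (Proposition~\ref{sfC'basis}) then yields $C\le_L B$. The $\le_R A$ half is then formal via $\tau$, as in step (iv).
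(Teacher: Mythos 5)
Your overall strategy is the paper's: evaluate on the generator $x'_\xi y'_\eta$, reduce to Kazhdan--Lusztig multiplication in $\sH$, and obtain the $\le_R$ half by applying $\tau$. Steps (i) and (iv) are fine. But the entire content of the lemma sits in the step you yourself flag as ``the main obstacle'', and neither of your two proposed ways of closing it works as written. The first version of (ii)--(iii) just asserts the conclusion, and also introduces a spurious difficulty: Definition~\ref{CELLS} compares the $\jmath^{+,-}$-representatives (the longest elements $d^*$) directly, so there is nothing to ``descend to minimal representatives''. Your fallback ``cleanest route'' contains a false claim: $\sfC'_B=y'_\mu C_{w}y'_\eta$ does \emph{not} lie in $\mathrm{span}\{C_z\mid z\le_L w\}\subseteq\sH$ in general, because the factor $y'_\eta$ acts by \emph{right} multiplication, which respects $\le_R$ but not $\le_L$ (one only gets $z\le_{LR}w$). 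Likewise ``$\Theta'_A$ is an $\sH$-module homomorphism, hence preserves left cell filtrations'' is the wrong justification: $\Theta'_A$ is a homomorphism of \emph{right} modules, and what is actually needed is its realization as left multiplication by a specific element. Even the salvaged version (working with the basis $\{C_uy'_\eta\}$ of $\sH x_\xi y_\eta$) would require a triangularity of $\{\sfC'_C\}$ against $\{C_uy'_\eta\}$ compatible with $\le_L$, which is not what Proposition~\ref{sfC'basis} gives (that triangularity is with respect to the Bruhat-type order $\le$ on double cosets).

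The missing computation, which is the whole proof in the paper, is the following. Since $sw<w$ for all $s\in\la$, one has $x'_\la C_w=h_\la C_w$, so $\sfC'_B=h_\la^{-1}\,x'_\la y'_\mu\cdot C_wy'_\eta$ and therefore
$$\Theta'_A\Theta'_B(x'_\xi y'_\eta)=h_\la^{-1}\,\Theta'_A(x'_\la y'_\mu)\,C_wy'_\eta=h_\la^{-1}\,y'_\beta\,\bigl(C_y\,y'_\mu\,C_w\bigr)\,y'_\eta .$$
Now two things happen: (a) the inner product expands as $C_yy'_\mu C_w=\sum_zh_{y,w,z}C_z$ with $h_{y,w,z}\ne0\Rightarrow z\le_Lw$, by the very definition of the left preorder, since everything to the left of $C_w$ acts by left multiplication; and (b) for each such $z$, the outer sandwich $y'_\beta C_zy'_\eta$ is either $0$ or \emph{exactly} $\sfC'_C$ with $C=\jmath^{+,-}(\alpha|\beta,z,\xi|\eta)$, because nonvanishing forces $z\in\fD^{+,-}_{\alpha|\beta,\xi|\eta}$, and by Lemma~\ref{TIP}(1$'$) such a $z$ is the longest element of its $\fS_{\alpha^*}$--$\fS_{\xi^*}$ double coset, so Proposition~\ref{sfC'basis} applies verbatim. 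Point (b) is what lets one read off $f_{A,B,C}=h_\la^{-1}h_{y,w,z}$ with no re-expansion, so the relation $z\le_Lw$ transfers directly to $C\le_LB$; this is precisely the step your filtration argument cannot supply.
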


\begin{proof}Let $A=\jmath^{+,-}(\alpha|\beta,y,\gamma|\delta)$ and
$B=\jmath^{+,-}(\la|\mu,w,\xi|\eta)$.
 If $\lambda|\mu\neq \gamma|
\delta$, then $f_{A,B,C}=0$ for all $C$. Suppose $\lambda| \mu=
\gamma|\delta$ and let $h_\lambda\in \mathbb Z[v, v^{-1}]$ be defined
by $x_\lambda' x_\lambda'=h_\lambda x_\lambda'$. We have by \eqref{Theta'}
$$
\aligned
 \Theta'_A \Theta'_B (x_\xi' y_\eta')&=\Theta'_A(y_\mu' C_w y_\eta')=h_\la^{-1}\Theta'_A(x_\la'y_\mu')C_w y_\eta'=h_\la^{-1}y_\beta' C_yy_\mu'C_w y_\eta'\\
&= \sum_{z\in\fD_{\alpha|\beta, \xi|\eta}^{+,
-}} h_{\lambda}^{-1}h_{y,w,z}
y_\beta' C_z y_\eta' =\sum_{z\in \fD_{\alpha|\beta, \xi|\eta}^{+,
-}}h_{\lambda}^{-1}h_{y,w,z}\Theta'_C(x_\xi' y_\eta'),\endaligned$$ where
$h_{y,w,z}\in \mathbb Z[v, v^{-1}]$ satisfy $C_yy_\mu'
C_w=\sum_zh_{y,w,z} C_z,$ and
$C=\jmath^{+,-}(\alpha|\beta,z,\xi|\eta)$. Here we have used the fact that $
y_\beta' C_z y_\eta'\neq 0\implies z\in \fD_{\alpha|\beta, \xi|\eta}^{+, -}$
(see Remark \ref{TIP2}). Hence,
$$f_{A,B,C}=\begin{cases}h_\lambda^{-1}h_{y,w,z}, &\text{ if } y_\beta' C_z y_\eta'\neq 0,\\
0,&\text{ otherwise.}\end{cases}$$ Since $h_{y,w,z}\neq 0$ implies $z\le_L
w$, it follows that
$f_{A,B,C}\not=0$ implies $z\le_Lw,\co(C)=\co(B),$ proving the first assertion.
The second assertion follows from the anti-involution $\tau$ given in Theorem
\ref{base change}.
\end{proof}

For each $A\in M(m|n, r)$, let $(\ssS(A), \ssT(A))$ be the image of $A$
under the  RSK super-correspondence in  Theorem~\ref{RSKs}. The
following result can be considered as a generalization of
Theorem~\ref{RS}(1)--(3).

\begin{Lemma}\label{GRS} Suppose $A,B\in M(m|n,r)$. Then
\begin{itemize}
\item[(1)]  $A\sim_L B$
 if and only if $\ssT(A)=\ssT(B)$.

\item[(2)] $A\sim_R B$ if and only if $\ssS(A)=\ssS(B)$.

\item[(3)] $A\sim_{LR}B$ if and only if $\ssT(A)$, $\ssT(B)$ have the same shape.
\end{itemize}
\end{Lemma}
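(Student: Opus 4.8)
The plan is to unwind the definition \eqref{genrs} of the super-correspondence $\partial$ and reduce each equivalence to the classical Robinson--Schensted facts of Theorem~\ref{RS}. Write $A=\jmath^{+,-}(\la|\mu,w_A,\xi|\eta)$, so that $(\ssS(A),\ssT(A))=\partial(w_A)$. Recall from \eqref{genrs} that $\ssT(A)$ is obtained from the permutation $y_A$ determined by $P(y_A)=\ts_{\nu^t}$ and $Q(y_A)=Q(w_A)$, where $\nu^t$ is the shape of $Q(w_A)$, via the bijection $\ssT\mapsto(w_{0,\nu})_\ssT w_{0,\xi}$ from $\bfT^{sss}(\nu,\xi|\eta)$ onto $\fD^{+,-}_{\nu|0,\xi|\eta}\cap\varpi_\nu$ supplied by Proposition~\ref{supersemi}. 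The first point is that $\ts_{\nu^t}$ depends only on the shape of $Q(w_A)$, so by the Robinson--Schensted bijection $y_A$ and $Q(w_A)$ determine one another; hence, since equal tableaux have equal content, the data of $\ssT(A)$ is equivalent to the data of the pair $\big(Q(w_A),\co(A)\big)$.

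Statement (1) is then immediate: $\ssT(A)=\ssT(B)$ forces $\co(A)=\co(B)$, and given this it is equivalent to $y_A=y_B$, to $Q(w_A)=Q(w_B)$, and (by Theorem~\ref{RS}(1)) to $w_A\sim_L w_B$; by the description of $\sim_L$ recorded just after Definition~\ref{CELLS}, this last condition together with $\co(A)=\co(B)$ is exactly $A\sim_L B$. For (2) I pass to transposes. A short computation with the length conditions in \eqref{double-sign}, using $l(sw)<l(w)\iff l(w^{-1}s)<l(w^{-1})$, shows $w_A^{-1}\in\fD^{+,-}_{\xi|\eta,\la|\mu}$; combined with the transpose property of $\jmath$ noted below \eqref{jmath} this gives $A^T=\jmath^{+,-}(\xi|\eta,w_A^{-1},\la|\mu)$, hence $\ssS(A)=\ssT(A^T)$ by the symmetry $\partial(w^{-1})=(\ssT_w,\ssS_w)$. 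Therefore $A\sim_R B\iff A^T\sim_L B^T\iff\ssT(A^T)=\ssT(B^T)\iff\ssS(A)=\ssS(B)$, using (1) for the middle equivalence.

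For (3), the forward direction rests on the fact that $A\mapsto w_A$ carries $\le_L$ and $\le_R$ on $M(m|n,r)$ to $\le_L$ and $\le_R$ on $\fS_r$ --- immediate from Definition~\ref{CELLS} for $\le_L$, and reducing to that case via $w_{A^T}=w_A^{-1}$ for $\le_R$ --- so $A\sim_{LR}B$ implies $w_A\sim_{LR}w_B$, whence $P(w_A)$ and $P(w_B)$ have the same shape by Theorem~\ref{RS}(3), i.e.\ $\ssT(A)$ and $\ssT(B)$ have the same shape. For the converse, if $\ssT(A)$ and $\ssT(B)$ share a shape $\nu$ then $\ssS(A)$ also has shape $\nu$, so $(\ssS(A),\ssT(B))$ is an admissible pair and Theorem~\ref{RSKs} yields a matrix $A'\in M(m|n,r)$ with $\ssS(A')=\ssS(A)$ and $\ssT(A')=\ssT(B)$; then $A\sim_R A'$ by (2) and $A'\sim_L B$ by (1), so $A\sim_{LR}B$.

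I expect the only steps requiring genuine care to be the transpose bookkeeping --- that $\jmath^{+,-}$ turns matrix transposition into permutation inversion, so $w_{A^T}=w_A^{-1}$, which is what makes both (2) and the forward half of (3) go through --- and the precise split of what in $\ssT(A)$ is governed by $Q(w_A)$ alone versus by $Q(w_A)$ together with $\co(A)$; everything else is a formal consequence of Theorem~\ref{RS}, Proposition~\ref{supersemi} and Theorem~\ref{RSKs}.
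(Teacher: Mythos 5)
Your argument is correct and follows essentially the same route as the paper: reduce each equivalence to the classical Robinson--Schensted facts of Theorem~\ref{RS} via the construction of $\partial$ and its inverse, handling $\sim_R$ by transposition. You in fact supply details the paper compresses into ``follows immediately'' --- notably the verification that $\jmath^{+,-}$ converts matrix transposition into permutation inversion (so $\ssS(A)=\ssT(A^T)$), and the interpolating matrix $A'$ with $\ssS(A')=\ssS(A)$, $\ssT(A')=\ssT(B)$ needed for the converse of (3) --- and these are exactly the right points to make explicit.
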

\begin{proof}
Suppose  $w_1\in \mathfrak D_{\lambda|\mu, \xi|\eta}^{+, -}$
and $w_2\in \mathfrak D_{\alpha|\beta, \gamma|\delta}^{+, -}$ which have images $(\ssS_{w_1},\ssT_{w_1})$ and $(\ssS_{w_2},\ssT_{w_2})$ under the map $\partial$ defined in \eqref{genrs}. By the constructions of $\partial$ and its inverse (see proof of Theorem~\ref{RSKs}), we sees easily the following:
\begin{enumerate}\item[(1)]  $w_1\sim_L w_2$ and $\xi|\eta=\gamma|\delta$
 if and only if $\ssT_{w_1}=\ssT_{w_2}$.

\item[(2)] $w_1\sim_R w_2$ and $\lambda|\mu=\alpha|\beta$ if and only if $\ssS_{w_1}=\ssS_{w_2}$.

\item[(3)] $w_1\sim_{LR} w_2$ if and only if $\ssT_{w_1}$, $\ssT_{w_2}$ have the same shape.
\end{enumerate}
Now the assertions follow immediately.
\end{proof}

For $\nu\in \Lambda^+(r)_{m|n}$, let
$$I(\nu)= \bigcup_{\lambda|\mu\in \Lambda(m|n, r)}
\bfT^{sss}(\nu, \lambda|\mu).$$
By the RSK super-correspondence, if $A\RSKs(\ssS,\ssT)\in I(\nu)$, we relabel the basis element $\Theta'_A$  as
 $$
\Theta^{\prime\,\nu}_{\ssS,\ssT}:=\Theta'_A.$$

\begin{Prop}\label{cellular} The $\mathbb
Q(\up)$-basis for $\bsS(m|n,r)$
$$\{\Theta^{\prime\,\nu}_{\ssS,\ssT}\mid \nu\in \Lambda^+(r)_{m|n},\ssS,\ssT\in I(\nu)\}
=\{\Theta'_A\mid A\in M(m|n,r)\}$$ is a cellular basis in the sense of \cite{GL}.
\end{Prop}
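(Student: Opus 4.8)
The plan is to verify the three axioms of a cellular basis in the sense of Graham--Lusztig directly, using the RSK super-correspondence of Theorem~\ref{RSKs} to supply the cell datum. The poset is $\Lambda^+(r)_{m|n}$, ordered by the dominance order $\unrhd$ (or its opposite, whichever makes the filtration work out); for each $\nu$ the index set $I(\nu)=\bigcup_{\lambda|\mu}\bfT^{sss}(\nu,\lambda|\mu)$; and the basis element attached to $(\nu,\ssS,\ssT)$ is $\Theta^{\prime\,\nu}_{\ssS,\ssT}=\Theta'_A$, where $A\RSKs(\ssS,\ssT)$. The first axiom (the basis is indexed exactly by such triples) is immediate from Theorem~\ref{RSKs}, since the RSK super-correspondence is a bijection between $M(m|n,r)$ and the disjoint union of the $I(\nu)\times I(\nu)$.

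For the anti-involution axiom, I would invoke the anti-involution $\tau$ of Theorem~\ref{base change}, which satisfies $\tau(\phi_A)=\phi_{A^T}$ and hence (after tracking the normalizing powers of $\up$ in the definition of $\Theta'_D$, and using that $\tau$ preserves the $\mathbb Z_2$-grading) $\tau(\Theta'_A)=\Theta'_{A^T}$. By the last sentence of Theorem~\ref{RSKs}, $A\RSKs(\ssS,\ssT)$ implies $A^T\RSKs(\ssS,\ssT)$ with the \emph{same} pair in the \emph{opposite} order --- more precisely, the symmetry $\partial(w^{-1})=(\ssT_w,\ssS_w)$ combined with $\jmath^{+,-}$ being compatible with transpose shows $\ssS(A^T)=\ssT(A)$ and $\ssT(A^T)=\ssS(A)$. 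Therefore $\tau(\Theta^{\prime\,\nu}_{\ssS,\ssT})=\Theta^{\prime\,\nu}_{\ssT,\ssS}$, which is exactly the required property of a cellular anti-involution.

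The substantive axiom is the cellular multiplication rule: for any $a\in\bsS(m|n,r)$ one must have
$$a\,\Theta^{\prime\,\nu}_{\ssS,\ssT}\equiv\sum_{\ssS'\in I(\nu)}r_a(\ssS',\ssS)\,\Theta^{\prime\,\nu}_{\ssS',\ssT}\pmod{\bsS(m|n,r)^{\gdom\nu}},$$
with scalars $r_a(\ssS',\ssS)$ independent of $\ssT$, where $\bsS(m|n,r)^{\gdom\nu}$ is spanned by all $\Theta^{\prime\,\mu}_{\bsU,\bsV}$ with $\mu\gdom\nu$. The plan is to reduce this to Lemma~\ref{tri-rel} together with Lemma~\ref{GRS}. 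By Lemma~\ref{tri-rel}, in $\Theta'_C\Theta'_A=\sum_D f_{C,A,D}\Theta'_D$ every $D$ with $f_{C,A,D}\neq0$ satisfies $D\leq_L A$ and $D\leq_R C$. Combining $D\leq_L A$ with Lemma~\ref{GRS}: either $D\sim_L A$, in which case $\ssT(D)=\ssT(A)=\ssT$ and (after also checking the shape cannot strictly increase along $\leq_L$) $D$ contributes a term $\Theta^{\prime\,\nu}_{\ssS',\ssT}$; or $D<_L A$, and one shows the shape $\mu$ of $\ssT(D)$ then strictly dominates $\nu$, so that term lies in $\bsS(m|n,r)^{\gdom\nu}$. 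The independence of the coefficient of $\ssT$ is the delicate point: it follows because the structure constant $f_{C,A,D}=h_\lambda^{-1}h_{y,w,z}$ computed in the proof of Lemma~\ref{tri-rel} depends only on $y,w,z$ --- i.e.\ only on the $\sim_R$-data, i.e.\ only on $\ssS(C),\ssS(A)=\ssS,\ssS(D)=\ssS'$ --- and not on the recording tableau $\ssT$. I would state this as: the map $z\mapsto\ssS(D)$ is a bijection between right-cell data with fixed column tableau fixed, compatible with the Hecke-algebra cell structure theorem.

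The main obstacle I anticipate is the order-theoretic bookkeeping in the third axiom: one must pin down precisely which dominance order on $\Lambda^+(r)_{m|n}$ makes the two-sided cell filtration strictly decreasing, and verify that $D\leq_{LR}A$ with $D\not\sim_{LR}A$ forces the shape of $\ssT(D)$ to be strictly larger (in that order) than $\nu$ --- this is the super-analogue of the classical fact that $\leq_{LR}$ refines dominance of shapes, and will need Lemma~\ref{GRS}(3) together with the known behaviour of Kazhdan--Lusztig two-sided cells of $\fS_r$ under dominance (Theorem~\ref{RS}(3) and \cite{KL79}). Once the order is fixed correctly, the rest is a routine assembly of Lemmas~\ref{tri-rel}, \ref{GRS} and Theorem~\ref{base change}.
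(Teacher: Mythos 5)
Your proposal is correct and follows essentially the same route as the paper: the same cell datum (poset $\Lambda^+(r)_{m|n}$ with dominance order, index sets $I(\nu)$ via the RSK super-correspondence), the anti-involution obtained from $\tau(\phi_A)=\phi_{A^T}$ together with the transpose-symmetry of Theorem~\ref{RSKs}, and the multiplication axiom deduced from Lemma~\ref{tri-rel} combined with Lemma~\ref{GRS}, with independence of the coefficient from the recording tableau read off from the explicit formula $f_{A,B,C}=h_\lambda^{-1}h_{y,w,z}$. The order-theoretic point you flag is resolved exactly as you anticipate: since the shape assigned by $\partial$ is the \emph{dual} of the Robinson--Schensted shape, going strictly down in $\leq_{LR}$ strictly increases the shape in dominance, so those terms land in $\bsS(m|n,r)^{\rhd\nu}$.
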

\begin{proof}  Recall from \cite{GL} the ingredients for a cellular basis. We have
a poset $\Lambda^+(r)_{m|n}$ together with the dominance order $\trianglerighteq$, index sets
$I(\nu)$ of the basis, and an anti-involution $\tau$ satisfying $\tau(\Theta^{\prime\,\nu}_{\ssS,\ssT})= \Theta^{\prime\,\nu}_{\ssT,\ssS}$ by Theorems \ref{base change} and \ref{RSKs}. It remains to check the triangular relations.

Let $\bsS(m|n, r)^{\rhd \nu}$ be the $\mathbb Q(\up)$-subspace
spanned by  $\Theta^{\prime\alpha}_{\ssS_1,\ssT_1}$ for all $\alpha\rhd \nu$ and
$\ssS_1, \ssT_1\in I(\alpha)$.   For
$\la,\nu\in \Lambda^+(r)_{m|n}$ and $\ssS,\ssT\in
I(\la),\ssS',\ssT'\in I(\nu)$, Lemmas \ref{tri-rel} and \ref{GRS}(3) imply that:
$$\Theta^{\prime\la}_{\ssS,\ssT}\Theta^{\prime\,\nu}_{\ssS',\ssT'}\equiv\sum_{C\in M(m|n,r),C\sim_LB}f_{A,B,C}\Theta^{\prime\,\nu}_C\,(\text{mod}\,
\bsS(m|n, r)^{\rhd \nu}),$$
where $A\RSKs(\ssS,\ssT)$, $B\RSKs(\ssS',\ssT')$ and $C\RSKs(\ssS'',\ssT'')$.
Since $C\sim_L B$, it follows from \ref{GRS}(1), $\ssT''=\ssT'$. If $\la\triangleright \nu$, then all $f_{A,B,C}=0$.
If $\la=\nu$ and $f_{A,B,C}\neq0$, then $C\sim_RA$. Hence, $\ssS''=\ssS$ and $f_{A,B,C}=f(\ssT,\ssS')$ is independent of $\ssT'$. Finally, if
$\la\triangleleft \nu$ and $f_{A,B,C}\neq0$, then $f_{A,B,C}=f(\ssS,\ssT,\ssS')$ is also independent of $\ssT'$, as required.
\end{proof}

 For each $\nu\in \Lambda^+(r)_{m|n}$ and $\ssT\in I(\nu)$, let
\begin{equation}\label{sim}\bsDe(\nu)_\ssT=\bsS(m|n, r)^{\trianglerighteq \nu,\ssT}/\bsS(m|n, r)^{\rhd \nu},
\end{equation}
where $\bsS(m|n, r)^{\trianglerighteq \nu,\ssT}$ is the $\mathbb Q(\up)$-space spanned by $\bsS(m|n, r)^{\rhd \nu}$ and $\Theta^{\prime\,\nu}_{\ssS,\ssT}$, $\ssS\in I(\nu)$. These are called {\it left cell modules}.
Let
 $\ssT_\nu$ be the unique element in $\bfT^{sss}(\nu,
\nu'|\nu'')$ as described in Example \ref{ssT_la} and let $\bsDe(\nu)=\bsDe(\nu)_{\ssT_\nu}$.
The following result generalizes the second part of Theorem \ref{RS}.

\begin{thm} For each $\nu\in \Lambda^+(r)_{m|n}$ and $\ssT\in I(\nu)$, we have $\bsDe(\nu)_\ssT\cong\bsDe(\nu)$ as $\bsS(m|n,
r)$-supermodules. Moreover, the set $\{\bsDe(\nu)\mid \nu\in \Lambda^+(r)_{m|n}\}$ is a
complete set of pair-wise non-isomorphic irreducible $\bsS(m|n,
r)$-supermodules.
\end{thm}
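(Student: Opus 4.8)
The plan is to exploit the cellular structure established in Proposition~\ref{cellular} together with the general machinery of Graham--Lusztig \cite{GL}, and then to identify the non-vanishing of the relevant bilinear forms via a reduction to the Hecke algebra and Theorem~\ref{RS}. First I would invoke the general theory of cellular algebras: for each $\nu\in\Lambda^+(r)_{m|n}$ the cell module $\bsDe(\nu)_\ssT$ carries a $\bsS(m|n,r)$-invariant bilinear form $\langle\,,\,\rangle_\nu$ (built from the structure constants $f_{A,B,C}$ appearing in Lemma~\ref{tri-rel}), whose radical is a submodule; the quotient $L(\nu):=\bsDe(\nu)_\ssT/\Rad\langle\,,\,\rangle_\nu$ is either zero or absolutely irreducible, and the non-zero $L(\nu)$ form a complete set of pairwise non-isomorphic simple modules. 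So the theorem reduces to two claims: (i) the form $\langle\,,\,\rangle_\nu$ is non-degenerate for every $\nu\in\Lambda^+(r)_{m|n}$ (equivalently $\bsDe(\nu)_\ssT$ is irreducible and equals $L(\nu)$), and (ii) $\bsDe(\nu)_\ssT\cong\bsDe(\nu)_{\ssT'}$ for any two $\ssT,\ssT'\in I(\nu)$, in particular $\bsDe(\nu)_\ssT\cong\bsDe(\nu)$.

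For claim (ii), the standard cellular-algebra fact is that for a fixed $\nu$ all the ``column'' cell modules indexed by different $\ssT\in I(\nu)$ are isomorphic via the right action of the basis elements $\Theta^{\prime\,\nu}_{\ssT,\ssT'}$, provided the form is non-zero; more robustly, one can observe directly from the multiplication rule in the proof of Proposition~\ref{cellular} that right multiplication by a suitable $\Theta^{\prime\,\nu}_{\ssT,\ssT'}$ (mod $\bsS(m|n,r)^{\rhd\nu}$) sends $\bsDe(\nu)_\ssT$ to $\bsDe(\nu)_{\ssT'}$ and is invertible once one knows the Gram matrix is non-singular. So (ii) will follow from (i), and in fact for (i) it suffices to exhibit, for each $\nu$, one pair $\ssS,\ssT$ with $\langle\Theta^{\prime\,\nu}_{\ssS,\ssT_\nu},\ldots\rangle_\nu\neq0$, e.g. by evaluating the relevant product of canonical basis elements on $\sT_{\fS_{\co}}$ and tracking the leading term. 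Concretely, using $\sfC'_D=y'_\mu C_{d^*}y'_\eta$ from Proposition~\ref{sfC'basis} and the idempotent $\Theta_{\jmath(\la|\mu,1,\la|\mu)}'=\vph_{\jmath(\la|\mu,1,\la|\mu)}$ noted after Theorem~\ref{canonical}, one computes $\Theta^{\prime\,\nu}_{\ssS,\ssT_\nu}\cdot\Theta^{\prime\,\nu}_{\ssT_\nu,\ssS}$ and reads off a non-zero structure constant from the Hecke-algebra product $C_{y}\,y'_\mu\,C_{w}$, whose leading coefficient is $1$ when $y,w$ lie in the appropriate double-coset cell.

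The place where real work is needed is the non-degeneracy in (i), and here I would transfer the problem to the Hecke algebra. By construction $\bsDe(\nu)_{\ssT_\nu}$ is, up to the super-sign twist, the image under the functor $-\mapsto -\,\cdot\,\sfC'_{\jmath(\nu'|\nu'',1,\nu'|\nu'')}$ (i.e. ``multiply by the $\nu$-weight idempotent'') of a dual left cell module $S_{\nu,\mathbb Q(\up)}$ of $\sH$, which by Theorem~\ref{RS} is irreducible; one must check this functor does not kill $S_{\nu,\mathbb Q(\up)}$, and that is exactly the content of Proposition~\ref{YoungRule}: $S_{\nu,\mathbb Q(\up)}$ appears with multiplicity $1$ in $x_{\nu'}y_{\nu''}\sH_{\mathbb Q(\up)}$ whenever $\nu\in\Lambda^+(r)_{m|n}$ (see the Remark following that proposition). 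Thus $\bsDe(\nu)\neq0$ for all $\nu\in\Lambda^+(r)_{m|n}$, and since it is a non-zero cell module of a cellular algebra with the Hom into $S_\nu$ one-dimensional, it is irreducible; distinctness across $\nu$ follows from Lemma~\ref{GRS}(3) (different $\nu$ give non-isomorphic cell modules because their composition factors involve different shapes) together with Theorem~\ref{RS}. Completeness — that there are no other simples — then follows because the number of these modules equals the number of two-sided cells of $M(m|n,r)$, which by the RSK super-correspondence (Theorem~\ref{RSKs}) and Lemma~\ref{GRS}(3) is exactly $|\Lambda^+(r)_{m|n}|$, matching the count of distinct shapes, and every simple of a cellular algebra arises from some cell module. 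The main obstacle I anticipate is handling the super-sign $(-1)^{\hat A\hat B}$ carefully when transporting irreducibility and the bilinear-form argument from the classical $q$-Schur-algebra setting, and making sure the cellular-algebra formalism of \cite{GL} applies verbatim to the $\mathbb Z_2$-graded (super) multiplication rather than the ungraded one.
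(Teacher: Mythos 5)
Your overall strategy---cellularity from Proposition \ref{cellular} plus an identification of the simple heads---is in the same circle of ideas as the paper's, but the paper closes the argument differently and more cheaply: it notes that $\bsH$ is semisimple over $\mathbb Q(\up)$, hence so is $\bsS(m|n,r)=\text{\rm End}_{\bsH}(\fT(m|n,r)\otimes\mathbb Q(\up))$ (the super-twist of the multiplication does not change the radical), and then combines Wedderburn--Artin with the dimension identity $\dim\bsS(m|n,r)=\sum_{\nu}(\dim\bsDe(\nu))^2$ supplied by the cellular basis. One small point first: the isomorphism $\bsDe(\nu)_\ssT\cong\bsDe(\nu)$ requires no hypothesis on the bilinear form. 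It is immediate from the cellular axiom that the structure constants for left multiplication on $\Theta^{\prime\,\nu}_{\ssS,\ssT}$ modulo $\bsS(m|n,r)^{\rhd\nu}$ are independent of $\ssT$ (this is exactly what the last paragraph of the proof of Proposition \ref{cellular} establishes), so your reduction of claim (ii) to claim (i) is an unnecessary detour.

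The genuine gap is in your irreducibility step. Proposition \ref{YoungRule} gives you that $S_{\nu,\mathbb Q(\up)}$ occurs with multiplicity one in $x_{\nu'}y_{\nu''}\sH_{\mathbb Q(\up)}$, i.e.\ that a certain weight space of $\bsDe(\nu)$ is one-dimensional; but the inference ``a non-zero cell module with one-dimensional Hom into $S_\nu$ is irreducible'' is false for cellular algebras in general: cell modules are always non-zero (they have a basis by construction), often have one-dimensional highest weight spaces, and are typically reducible outside the semisimple situation (e.g.\ at roots of unity). What actually makes the conclusion true here is the semisimplicity of $\bsS(m|n,r)$ itself, which you invoke for $\bsH$ (to know $S_{\nu,\mathbb Q(\up)}$ is irreducible) but never transport to $\bsS(m|n,r)$; relatedly, the ``functor'' $-\cdot\sfC'_{\jmath(\nu'|\nu'',1,\nu'|\nu'')}$ you apply to $S_{\nu,\mathbb Q(\up)}$ is defined on $\bsS(m|n,r)$-modules, not on $\sH$-modules, so that sentence does not parse as written. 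The repair is short: the endomorphism algebra of a module over a semisimple algebra is semisimple, and for a semisimple cellular algebra all the forms are non-degenerate (Graham--Lehrer), which yields your claim (i); at that point your counting argument for completeness goes through, or one can simply quote Wedderburn together with the dimension count, which is the paper's route.
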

\begin{proof} The first assertion follows from the cellular property.
Thus, Proposition \ref{cellular} implies $\dim \bsS(m|n,
r)=\sum_{\nu\in \Lambda^+(r)_{m|n}} (\dim \bsDe(\nu))^2$. Since $\up$ is an indeterminate, $\bsH$ is semisimple.
Hence, $\bsS(m|n, r)$ is also semisimple
 as the super product does
not change the radical of the endomorphism algebra with a usual
product. By the Wedderburn-Artin Theorem,  $\{\bsDe(\nu)\mid \nu\in
\Lambda^+(r)_{m|n}\}$ is a complete set of pair-wise non-isomorphic
irreducible $\bsS(m|n, r)$-modules. Finally, it is routine to check
that $\bsDe(\nu)$'s  are $\bsS(m|n, r)$-supermodules. In fact, they
are the absolute irreducible supermodues in the sense of
\cite[2.8]{BK:sergeev}.\end{proof}

We end this section with a second look at the canonical basis for $\fT(m|n,r)$ described in
Remark \ref{om1om2}. Recall the $\phi$-basis defined in \eqref{standardbasis}.

\begin{Lemma} If $m+n\geq r$, then there is an $\sS(m|n,r)$-$\sH$-bimodule isomorphism
between $\sS(m|n,r)\phi_{\omega_1|\omega_2}$ and $\fT(m|n,r)$,
where $\omega_1,\omega_2$ are defined in \ref{om1om2}, and $\phi_{\omega_1|\omega_2}:=\phi_{\omega_1|\omega_2,\omega_1|\omega_2}^1.$
\end{Lemma}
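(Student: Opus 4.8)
The plan is to exhibit an explicit isomorphism and check it respects both actions. Recall from Definition~\ref{ssch} that $\fT(m|n,r)=\bigoplus_{\xi|\eta}x_\xi y_\eta\sH$ and that $\sS(m|n,r)=\operatorname{End}_{\sH}(\fT(m|n,r))$ acts on $\fT(m|n,r)$ on the left; the right $\sH$-action on $\fT(m|n,r)$ is the obvious one. Since $m+n\geq r$, the element $\omega_1|\omega_2\in\Lambda(m|n,r)$ of Remark~\ref{om1om2} satisfies $\fS_{\omega_1|\omega_2}=\{1\}$, so $x_{\omega_1}y_{\omega_2}=T_1=1$ and hence $x_{\omega_1}y_{\omega_2}\sH=\sH$, the regular module, sitting inside $\fT(m|n,r)$ as a direct summand. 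Moreover $\phi_{\omega_1|\omega_2}=\phi^1_{\omega_1|\omega_2,\omega_1|\omega_2}$ is, by the remark following Theorem~\ref{canonical}, the idempotent in $\sS(m|n,r)$ projecting $\fT(m|n,r)$ onto this summand $x_{\omega_1}y_{\omega_2}\sH=\sH$. First I would define the map
$$\Phi:\sS(m|n,r)\phi_{\omega_1|\omega_2}\longrightarrow\fT(m|n,r),\qquad \Phi(\theta)=\theta(x_{\omega_1}y_{\omega_2})=\theta(1),$$
i.e.\ evaluation at the generator $1\in x_{\omega_1}y_{\omega_2}\sH\subseteq\fT(m|n,r)$.

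The key steps are then: (i) $\Phi$ is well defined and $\sS(m|n,r)$-linear, which is immediate since $\Phi(\psi\theta)=(\psi\theta)(1)=\psi(\theta(1))=\psi\cdot\Phi(\theta)$ (one must use the supermultiplication of \eqref{supermult} here, but the sign $(-1)^{\hat A\hat B}$ is absorbed consistently because $\fT(m|n,r)$ is an $\sS(m|n,r)$-supermodule by the last paragraph of \S5, so $\Phi$ is a map of supermodules); (ii) $\Phi$ is $\sH$-linear on the right: for $h\in\sH$, $\Phi(\theta\cdot h)$ should be $\theta(1)h=\Phi(\theta)h$, where the right $\sH$-module structure on $\sS(m|n,r)\phi_{\omega_1|\omega_2}$ comes from right multiplication on $\fT(m|n,r)$ via the identification in the next step --- more precisely, one checks directly that $\theta\mapsto\theta(1)$ intertwines the right $\sH$-action because every $\theta\in\sS(m|n,r)$ is an $\sH$-module endomorphism of $\fT(m|n,r)$, so $\theta(1\cdot h)=\theta(1)h$; (iii) surjectivity: given $z\in x_\xi y_\eta\sH$, the map sending $1\mapsto z$ and $x_\alpha y_\beta h\mapsto 0$ for $\alpha|\beta\ne\omega_1|\omega_2$ need not be $\sH$-linear, so instead one uses that $x_\xi y_\eta\sH$ is generated as a right $\sH$-module by $x_\xi y_\eta$, and by Proposition~\ref{curtis-gen} (with $\lambda|\mu=\xi|\eta$, $\xi|\eta=\omega_1|\omega_2$, so $\sT_D$-type elements span $\mathcal H^{+,-}_{\xi|\eta,\omega_1|\omega_2}=x_\xi y_\eta\sH\cap\sH$) there is a $\theta\in\operatorname{Hom}_\sH(x_{\omega_1}y_{\omega_2}\sH,x_\xi y_\eta\sH)\subseteq\sS(m|n,r)$ with $\theta(1)=x_\xi y_\eta$; composing with the idempotent $\phi_{\omega_1|\omega_2}$ keeps $\theta$ in $\sS(m|n,r)\phi_{\omega_1|\omega_2}$, and then right-multiplying realizes any $z$; (iv) injectivity: if $\theta\phi_{\omega_1|\omega_2}\in\ker\Phi$ then $\theta(\phi_{\omega_1|\omega_2}(\fT))=\theta(x_{\omega_1}y_{\omega_2}\sH)=\theta(1)\sH=0$, and since $\phi_{\omega_1|\omega_2}$ annihilates all other summands $x_\alpha y_\beta\sH$, we get $\theta\phi_{\omega_1|\omega_2}=0$.

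Alternatively, and perhaps more cleanly, I would phrase this via the standard basis $\{\phi_A\}$ of Theorem~\ref{base change}: $\sS(m|n,r)\phi_{\omega_1|\omega_2}$ has basis $\{\phi_A\phi_{\omega_1|\omega_2}\}$, which by the definition \eqref{standardbasis} and the fact that $\phi_{\omega_1|\omega_2}$ is the idempotent on the $\omega_1|\omega_2$-component equals $\{\phi_A\mid \co(A)=\omega_1|\omega_2\}$ up to scalars, i.e.\ precisely $\{\phi_A\mid A\in M(m|n,r)_{\mathrm{tsp}}\}$ in the notation of Remark~\ref{om1om2}; on the other hand $\{\phi_A(x_{\omega_1}y_{\omega_2})\mid \co(A)=\omega_1|\omega_2\}=\{T_{D}\mid D\in M(m|n,r)_{\mathrm{tsp}}\}$, and by Lemma~\ref{ABC}(1) the latter spans $\bigoplus_{\xi|\eta}x_\xi y_\eta\sH=\fT(m|n,r)$ freely (one matches $T_D$ with the basis $\{x_\xi y_\eta T_d\}$ of each summand via $\jmath^{+,-}$ of Corollary~\ref{jmath+-}). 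This makes the bijectivity of $\Phi$ on bases transparent.

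The main obstacle I anticipate is step (ii)/(iii): correctly identifying the \emph{right} $\sH$-module structure on $\sS(m|n,r)\phi_{\omega_1|\omega_2}$ and verifying that it matches the one on $\fT(m|n,r)$ under $\Phi$. The subtlety is that $\phi_{\omega_1|\omega_2}$ is an idempotent, so $\sS(m|n,r)\phi_{\omega_1|\omega_2}\cong\operatorname{Hom}_\sH(\phi_{\omega_1|\omega_2}\fT,\fT)=\operatorname{Hom}_\sH(x_{\omega_1}y_{\omega_2}\sH,\fT(m|n,r))=\operatorname{Hom}_\sH(\sH,\fT(m|n,r))$, and the right $\sH$-action is precisely the one induced from the right $\sH$-action on the \emph{target} $\fT(m|n,r)$ (not from $\sH$ in the first slot, which would give the opposite action); with this understood, $\operatorname{Hom}_\sH(\sH,\fT(m|n,r))\cong\fT(m|n,r)$ canonically as $\sS(m|n,r)$-$\sH$-bimodules, which is exactly the claim. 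I would also need a small remark that everything is compatible with the $\mathbb Z_2$-gradings so that the isomorphism is one of $\sS(m|n,r)$-\emph{super}modules, but this follows directly from the grading \eqref{grading for S} and the fact that $\phi_{\omega_1|\omega_2}$ is homogeneous of degree $\widehat{\,\omega_1|\omega_2\,}=|\omega_2|\pmod 2$.
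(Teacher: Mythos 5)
Your proposal is correct and takes essentially the same approach as the paper: the paper's entire proof consists of writing down the evaluation map $ev:\sS(m|n,r)\phi_{\omega_1|\omega_2}\to\fT(m|n,r)$, $\phi\mapsto\phi(1)$, and asserting that it is ``clearly'' an $\sS(m|n,r)$-$\sH$-bimodule isomorphism. You simply supply the verifications (bimodule compatibility, injectivity, surjectivity via Proposition~\ref{curtis-gen}, and a basis-matching alternative) that the paper leaves implicit.
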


\begin{proof} Consider the evaluation map
$$ev:\sS(m|n,r)\phi_{\omega_1|\omega_2}\overset\sim\longrightarrow \fT(m|n,r),\,\,
\phi\longmapsto\phi(1),$$ which is clearly an $\sS(m|n,r)$-$\sH$-bimodule isomorphism.
\end{proof}

If $m+n<r$, we choose $m',n'$ with $m\leq m'$, $n\leq n'$ and $m'+n'\geq r$. Then $\Lambda(m|n,r)$ can be regarded as a subset
of $\Lambda(m'|n',r)$. Let $e=\sum_{\la|\mu\in\Lambda(m|n,r)}\phi_{\text{diag}(\la|\mu)}$. Then
$\sS(m|n,r)\cong e\sS(m'|n',r)e$ is a centralizer subalgebra of $\sS(m'|n',r)$, and the map $ev$
above induces $\sS(m|n,r)$-$\sH$-bimodule isomorphism $e\sS(m'|n',r)\phi_{\omega_1|\omega_2}\cong
\fT(m|n,r)$.

Let
$$\aligned
\sE(m|n,r)&=\begin{cases}\sS(m|n,r)\phi_{\omega_1|\omega_2},&\text{ if }m+n\geq r,\\
e\sS(m'|n',r)\phi_{\omega_1|\omega_2},&\text{ if }m+n<r,
\end{cases}\\
\endaligned
$$
where $m\leq m'$, $n\leq n'$ and $m'+n'\geq r$. By the lemma and Remark \ref{om1om2},
we have the following.

\begin{Prop} By identifying $\sE(m|n,r)$ with $\fT(m|n,r)$,  the basis \eqref{CBforTS} for $\fT(m|n,r)$
identifies the basis
$$\{\Theta_D=\Theta_D'\mid A\in M(m|n,r)_{\text{\rm tsp}}\}$$(for $\sE(m|n,r)$), which is canonically related (in the sense of Theorem \ref{canonical}) to the standard basis $\{\varphi_A\mid A\in M(m|n,r)_{\text{\rm tsp}}\}$ for $\sE(m|n,r)$.
\end{Prop}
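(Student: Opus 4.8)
The plan is to deduce the proposition from Theorem~\ref{canonical}, Remark~\ref{om1om2}, and the bimodule isomorphism $ev$ of the preceding lemma, so that the only genuine work is to keep track of the various normalizations and to check that $ev$ is compatible with the bar involutions. First I would record the elementary facts forced by $\omega_1\vee\omega_2=(1^r,0,\ldots)$: every part of $\omega_1^*$ and of ${}^*\!\omega_2$ is $0$ or $1$, so $\fS_{\omega_1^*}=\fS_{{}^*\!\omega_2}=\{1\}$, and therefore $x_{\omega_1}=y_{\omega_2}=1$, $x'_{\omega_1}=C_{w_{0,\omega_1}}=1$, $y'_{\omega_2}=y_{\omega_2}=1$, $\sT_{\fS_{\omega_1|\omega_2}}=1$, and $\phi_{\omega_1|\omega_2}$ is the projection of $\fT(m|n,r)$ onto its summand $x_{\omega_1}y_{\omega_2}\sH=\sH$; in particular $ev(\phi)=\phi(x_{\omega_1}y_{\omega_2})$. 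By \eqref{standardbasis} the subset $\{\phi_A\mid A\in M(m|n,r)_{\mathrm{tsp}}\}$ of the standard basis of Theorem~\ref{base change} (equivalently the rescaled family $\{\varphi_A\}$) is exactly a basis of $\sE(m|n,r)=\sS(m|n,r)\phi_{\omega_1|\omega_2}$. I will run the argument for $m+n\ge r$; the case $m+n<r$ is identical after multiplying on the left by the idempotent $e$ inside $\sS(m'|n',r)$.

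Next I would evaluate. For $D\in M(m|n,r)_{\mathrm{tsp}}$ with $\ro(D)=\la|\mu$ and $\co(D)=\omega_1|\omega_2$, the definition of $\varphi_D$ gives $ev(\varphi_D)=\varphi_D(x_{\omega_1}y_{\omega_2})=\varphi_D(\sT_{\fS_{\omega_1|\omega_2}})=\sT_D$, while \eqref{Theta'} gives $ev(\Theta'_D)=\Theta'_D(x'_{\omega_1}y'_{\omega_2})=\sfC'_D$. By Remark~\ref{om1om2}, $\fS_{\beta(D)}=\{1\}$ for a tsp matrix, so \eqref{TandT'} yields $\sT_D=\sT'_D$ and \eqref{rr*} yields $r^*_{C,D}=r_{C,D}$; hence the canonical bases attached to the $\sT$- and the $\sT'$-filtration coincide on this block, i.e.\ $\sfC_D=\sfC'_D$. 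Thus $ev$ carries $\{\varphi_D\}_{\mathrm{tsp}}$ bijectively onto $\{\sT_D\}_{\mathrm{tsp}}$ and $\{\Theta'_D\}_{\mathrm{tsp}}$ onto $\{\sfC_D=\sfC'_D\}_{\mathrm{tsp}}$, and the latter is precisely the canonical basis \eqref{CBforTS} of $\fT(m|n,r)$. This proves the identification of bases.

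For the ``canonically related'' assertion I would first note that $\Theta'_D=\sum_{C\le D}p_{C,D}\varphi_C$ with $p_{D,D}=1$ and $p_{C,D}\in\up^{-1}\Z[\up^{-1}]$ for $C<D$: the right-hand side is the unique $\sH$-endomorphism vanishing off the $\omega_1|\omega_2$-summand and sending $x_{\omega_1}y_{\omega_2}$ to $\sum_{C\le D}p_{C,D}\sT_C=\sfC'_D$ (using Proposition~\ref{sfC'basis} and $\sT'_C=\sT_C$), which is $\Theta'_D$. It remains to see $\overline{\Theta'_D}=\Theta'_D$ in $\sS(m|n,r)$, and for this I would check that $ev$ intertwines the bar involutions: from the definition of $\bar{}$ on $\sS(m|n,r)$ in the proof of Theorem~\ref{canonical} together with \eqref{rcd} one gets $\overline{\varphi_D}(\sT_{\fS_{\omega_1|\omega_2}})=\overline{\sT_D}=\overline{\varphi_D(\sT_{\fS_{\omega_1|\omega_2}})}$ for every tsp $D$, hence $\overline{\phi}(x_{\omega_1}y_{\omega_2})=\overline{\phi(x_{\omega_1}y_{\omega_2})}$ for all $\phi\in\sE(m|n,r)$, that is $ev(\overline{\phi})=\overline{ev(\phi)}$. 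Since $\sfC_D=\sfC'_D=ev(\Theta'_D)$ is bar-invariant, so is $\Theta'_D$. Because the order $\le$, the matrix $(r_{C,D})$, and hence the construction of $\{\Theta_D\}$ all respect the decomposition of $M(m|n,r)$ into blocks of fixed $(\ro,\co)$, the uniqueness in Theorem~\ref{canonical} forces $\Theta'_D=\Theta_D$ for $D\in M(m|n,r)_{\mathrm{tsp}}$. Thus $\{\Theta_D=\Theta'_D\}_{\mathrm{tsp}}$ is the canonical basis of $\sE(m|n,r)$ attached to $\{\varphi_A\}_{\mathrm{tsp}}$ in the sense of Theorem~\ref{canonical}, as claimed.

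I expect the only real difficulty to be bookkeeping: aligning every normalization ($x_\la$ versus $x'_\la$, $y_\mu$ versus $y'_\mu$, $T_D$ versus $\sT_D$ versus $\sT'_D$, $\phi_D$ versus $\varphi_D$, $\sfC_D$ versus $\sfC'_D$) and, above all, verifying the bar-compatibility of $ev$ so that the uniqueness in Theorem~\ref{canonical} can be invoked; once that is in place the statement is just a combination of Remark~\ref{om1om2} with the preceding lemma.
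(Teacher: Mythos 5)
Your proof is correct and follows the same route the paper intends: the paper offers no details beyond citing the evaluation isomorphism and Remark~\ref{om1om2}, and your computations ($ev(\varphi_D)=\sT_D$, $ev(\Theta'_D)=\sfC'_D=\sfC_D$ on the tsp block, bar-compatibility of $ev$, then uniqueness in Theorem~\ref{canonical}) supply exactly the missing bookkeeping. The only simplification available is that the paper has already recorded $\Theta_D(\sT_{\fS_{\co(D)}})=\sfC_D$ after Theorem~\ref{canonical}, so $\Theta_D=\Theta'_D$ follows at once from $\sfC_D=\sfC'_D$ and injectivity of $ev$, without re-running the bar-invariance and uniqueness argument.
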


By definition, $\bfT^{sss}(\nu, \omega_1|\omega_2)=\bfT^{s}(\nu)$. Thus,
by \ref{RSKs}, restriction gives a bijection:
$$M(m|n,r)_{\text{tsp}}\longrightarrow\bigcup_{\lambda|\mu\in \Lambda(m|n, r)\atop\nu\in \Lambda^+(r)_{m|n}
 }  \bfT^{sss}(\nu, \lambda|\mu)\times \bfT^{s}(\nu).$$

Fix a linear ordering on
$\Lambda^+(r)_{m|n}=\{\nu^{(1)},\nu^{(2)},\ldots,\nu^{(N)}\}$ which
refines the opposite dominance ordering $\trianglerighteq$, i.~e.,
 $\nu^{(i)}\triangleright\nu^{(j)}$ implies $i<j$.
For each $1\le i\le N$, let $\sE_i$ denote the $\sZ$-free
submodule of $\sE(m|n,r)$ spanned by all $\Theta_{\ssS,\ts}^{\nu^{(i)}}$ with
$(\ssS,\ts)\in\bfT^{sss}(\nu^{(i)}, \lambda|\mu)\times \bfT^{s}(\nu^{(i)})$. Then we obtain a
filtration by $\sS(m|n,r)$-$\sH$-subbimodules:
\begin{equation}\label{D-filtration}
0=\sE_0\subseteq
\sE_1\subseteq\cdots\subseteq\sE_N=\sE(m|n,r).\end{equation}

Let $\bsE_i=\sE_i\otimes\mathbb Q(\up)$.
By the
cellular property established in Proposition \ref{cellular}, each
section $\bsE_i/\bsE_{i-1}$ is isomorphic to a direct sum of $|\bfT^s(\nu^{(i)})|$ copies of
 left
cell modules $\bsDe(\nu^{(i)})$ and to a direct sum of $|I(\nu^{(i)})|$ copies of right (cell) $\bsH$-modules\footnote{Note that the right cell module $S^\la$ defined by the right cell containing $w_{0,\la}$ is a homomorphic image of $x_\la\sH$, while the dual left cell module $S_\la$ defined in \ref{RS} is a submodule of $x_\la\sH$.}
$S^{\nu^{(i)}}_{\mathbb Q(\up)}.$ Hence,  $\bsE_i/\bsE_{i-1}\cong
\bsDe(\nu^{(i)})\otimes S^{\nu^{(i)}}_{\mathbb Q(\up)}$ as $\bsS(m|n,r)$-$\bsH$-bimodules.

\begin{Cor}\label{decomp} There is an $\bsS(m|n,r)$-$\bsH$-bimodule decomposition:
$$\bsE(m|n,r)\cong\bigoplus_{\nu\in\Lambda^+(r)_{m|n}}\bsDe(\nu)\otimes S^{\nu}_{\mathbb Q(\up)}.$$
\end{Cor}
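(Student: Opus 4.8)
The plan is to deduce the decomposition from the filtration \eqref{D-filtration} by showing that it splits once we base change to $\mathbb Q(\up)$. First I would collect the two semisimplicity facts already available: since $\up$ is an indeterminate, $\bsH$ is semisimple, and (as established in the proof of the classification theorem above, where the super product is observed not to change the radical) $\bsS(m|n,r)$ is semisimple as well. Because $\mathbb Q(\up)$ has characteristic zero, every finite-dimensional semisimple $\mathbb Q(\up)$-algebra is separable, so the tensor product $\bsS(m|n,r)\otimes_{\mathbb Q(\up)}\bsH^{\mathrm{op}}$ is again semisimple.

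Next I would invoke the standard identification of $\bsS(m|n,r)$-$\bsH$-bimodules with left modules over $\bsS(m|n,r)\otimes_{\mathbb Q(\up)}\bsH^{\mathrm{op}}$; semisimplicity of this algebra then forces $\bsE(m|n,r)$ to be completely reducible as a bimodule, so the chain of sub-bimodules \eqref{D-filtration} splits and $\bsE(m|n,r)\cong\bigoplus_{i=1}^N\bsE_i/\bsE_{i-1}$ as $\bsS(m|n,r)$-$\bsH$-bimodules. Substituting the isomorphism $\bsE_i/\bsE_{i-1}\cong\bsDe(\nu^{(i)})\otimes S^{\nu^{(i)}}_{\mathbb Q(\up)}$ recorded immediately before the statement, and re-indexing the sum over $\nu\in\Lambda^+(r)_{m|n}$, yields the asserted decomposition.

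I do not expect a serious obstacle here; the one point needing care is that the splitting be as \emph{bimodules} (not merely as one-sided modules), which is precisely why I pass to the algebra $\bsS(m|n,r)\otimes_{\mathbb Q(\up)}\bsH^{\mathrm{op}}$ rather than working with $\bsH$ and $\bsS(m|n,r)$ separately, and that it respect the $\mathbb Z_2$-grading. For the latter, since the cellular basis $\{\Theta'_A\}$ of Proposition~\ref{cellular} is homogeneous, the sub-bimodules in \eqref{D-filtration} are graded, so the splitting can be chosen homogeneous and the resulting isomorphism is one of $\bsS(m|n,r)$-supermodules. As an alternative route avoiding bimodule semisimplicity, one could instead combine the dimension identity $\dim\bsS(m|n,r)=\sum_{\nu}(\dim\bsDe(\nu))^2$ from Proposition~\ref{cellular} with the decomposition of $\bsE(m|n,r)$ as a right $\bsH$-module to pin down the multiplicities, but the semisimple-algebra argument is the cleaner one.
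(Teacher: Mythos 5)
Your proposal is correct and takes essentially the same route as the paper: the corollary is stated there without further argument immediately after the observation that the sections of the filtration \eqref{D-filtration} are $\bsDe(\nu^{(i)})\otimes S^{\nu^{(i)}}_{\mathbb Q(\up)}$, the intended point being exactly that semisimplicity of $\bsH$ and $\bsS(m|n,r)$ over $\mathbb Q(\up)$ forces the filtration to split as bimodules. Your passage to $\bsS(m|n,r)\otimes_{\mathbb Q(\up)}\bsH^{\mathrm{op}}$ merely makes explicit the detail the paper leaves to the reader.
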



 \begin{rem} For $\nu=\nu^{(i)}$, let $L(\nu)$ be the submodule of $\sE_i/\sE_{i-1}$ spanned
 by all $\Theta_{\ssS,\ts^\nu}^\nu$.
 Since $\Theta_{\ssS,\ts^\nu}^\nu=\Theta_{\ssS,\ts^\nu}^{\prime\,\nu}$ by Remark \ref{om1om2},
 one checks directly that $L(\nu)$ is an $\sS(m|n,r)$-module. In other words, $L(\nu)$ is closed under the action of the canonical basis $\Theta_A$.
 Base change allows us to investigate representations at roots of unity.
 We hope to classify  the irreducible $\sS(m|n, r)_R$-supermodules
 elsewhere when $\up^2$ is specialized to a root of unity in a field $R$.
\end{rem}

\section{A super analogue of the quantum Schur--Weyl reciprocity}

In this section, we first establish a double centralizer property. Then
we prove that the algebra $\sS(m|n,r)$ is isomorphic to the
endomorphism algebra of a tensor space considered in \cite{Mit}.
Thus, we reproduced the super analogue of the quantum Schur--Weyl reciprocity established in \cite{Mit}.

Let $\bsT(m|n,r)=\fT(m|n,r)\otimes{\mathbb Q(\up)}$.
\begin{thm}\label{DCP} The $\bsS(m|n,r)$-$\bsH$-bimodule structure $\bsT(m|n,r)$ satisfies the following double centralizer property
$$\bsS=\text{\rm End}_{\bar\bsH}(\bsT(m|n,r))\text{ and }\bar\bsH=\text{\rm End}_{\bsS}(\bsT(m|n,r)),$$
where $\bar\bsH$ is the image of $\bsH$ in
$\text{\rm End}_{\mathbb Q(\up)}(\bsT(m|n,r))$ and $\bsS=\bsS(m|n,r)$. Moreover, there is a category
equivalence
$$\text{\rm Hom}_{\bar\bsH}(-,\bsT(m|n,r)):\text{\bf mod-}\bar\bsH\longrightarrow\bsS\text{-\bf mod}
.$$
\end{thm}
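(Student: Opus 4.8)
The plan is to derive the whole statement from the cellular decomposition of \S7 together with the semisimplicity of the generic Hecke algebra, so that essentially no new computation is required. The equality $\bsS=\operatorname{End}_{\bar\bsH}(\bsT(m|n,r))$ is formal: by Definition~\ref{ssch} and the base change isomorphism of Theorem~\ref{base change}, $\bsS(m|n,r)=\sS(m|n,r)\otimes\mathbb Q(\up)\cong\operatorname{End}_{\bsH}(\bsT(m|n,r))$, while a $\mathbb Q(\up)$-linear endomorphism of $\bsT(m|n,r)$ commutes with the $\bsH$-action if and only if it commutes with $\bar\bsH$ (the image of $\bsH$ in $\operatorname{End}_{\mathbb Q(\up)}(\bsT(m|n,r))$ by definition); hence $\operatorname{End}_{\bsH}(\bsT(m|n,r))=\operatorname{End}_{\bar\bsH}(\bsT(m|n,r))=\bsS$.

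For the reverse centralizer equality I would use the explicit bimodule decomposition. Identifying $\bsT(m|n,r)$ with $\bsE(m|n,r)$ as in \S7, Corollary~\ref{decomp} gives
$$\bsT(m|n,r)\cong\bigoplus_{\nu\in\Lambda^+(r)_{m|n}}\bsDe(\nu)\otimes S^{\nu}_{\mathbb Q(\up)}$$
as $\bsS(m|n,r)$-$\bsH$-bimodules, where $\{\bsDe(\nu)\mid\nu\in\Lambda^+(r)_{m|n}\}$ is, by the classification theorem of \S7, a complete set of pairwise non-isomorphic irreducible $\bsS(m|n,r)$-modules, and the $S^{\nu}_{\mathbb Q(\up)}$ with $\nu\in\Lambda^+(r)_{m|n}$ are pairwise non-isomorphic irreducible right $\bsH$-modules --- only the irreducibles labelled by $(m,n)$-hook partitions occur. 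Since $\up$ is an indeterminate, $\bsH$ is split semisimple, so $\bar\bsH$, being $\bsH$ modulo the annihilator of $\bsT(m|n,r)$, is isomorphic to $\bigoplus_{\nu\in\Lambda^+(r)_{m|n}}\operatorname{End}_{\mathbb Q(\up)}(S^{\nu}_{\mathbb Q(\up)})$. Computing $\operatorname{End}_{\bsS}$ of the right-hand side of the displayed decomposition via Schur's lemma (the $\bsDe(\nu)$ being distinct irreducibles over the split algebra $\bsS$) identifies it with $\bigoplus_{\nu}\operatorname{End}_{\mathbb Q(\up)}(S^{\nu}_{\mathbb Q(\up)})=\bar\bsH$; this is exactly the classical double centralizer theorem for the faithful module $\bsT(m|n,r)$ over the semisimple algebra $\bar\bsH$. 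Symmetrically, $\operatorname{End}_{\bsH}$ of the right-hand side is $\bigoplus_{\nu}\operatorname{End}_{\mathbb Q(\up)}(\bsDe(\nu))=\bsS$, which re-establishes the first equality.

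For the category equivalence I would invoke Morita theory. As a right $\bar\bsH$-module, the decomposition reads $\bsT(m|n,r)\cong\bigoplus_{\nu}(S^{\nu}_{\mathbb Q(\up)})^{\oplus\dim\bsDe(\nu)}$, so $\bsT(m|n,r)$ contains every irreducible right $\bar\bsH$-module as a direct summand; since $\bar\bsH$ is semisimple and $\bsT(m|n,r)$ is finite-dimensional over $\mathbb Q(\up)$, this makes $\bsT(m|n,r)$ a progenerator of $\text{mod-}\bar\bsH$ with endomorphism ring $\bsS$. Morita theory then yields that $\operatorname{Hom}_{\bar\bsH}(-,\bsT(m|n,r))$ is an equivalence onto $\bsS\text{-mod}$; concretely, this $\mathbb Q(\up)$-linear additive functor sends the irreducible $S^{\nu}_{\mathbb Q(\up)}$ to $\operatorname{Hom}_{\bar\bsH}(S^{\nu}_{\mathbb Q(\up)},\bsT(m|n,r))\cong\bsDe(\nu)$ (Schur's lemma, applied to the decomposition), it therefore induces a bijection on isomorphism classes of simple objects, and it is full and faithful since it induces isomorphisms on the one-dimensional endomorphism algebras of simples while sending non-isomorphic simples to non-isomorphic ones --- whence it is an equivalence (split semisimplicity also makes the distinction between $\bsS\text{-mod}$ and its opposite immaterial for this contravariant functor).

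The only genuinely non-formal input is Corollary~\ref{decomp}, which is already available; the rest is bookkeeping with semisimple algebras. The point that needs care --- and the reason the statement must be phrased with $\bar\bsH$ rather than $\bsH$ --- is that $\bsT(m|n,r)$ is faithful only over the quotient $\bar\bsH$, the irreducible $\bsH$-modules labelled by partitions outside the $(m,n)$-hook being absent from it. Given that, the semisimplicity of $\bsH$ (hence of $\bar\bsH$ and, as noted in \S7, of $\bsS$) is precisely what allows the double centralizer theorem and Morita theory to apply with no projectivity or exactness conditions left to verify.
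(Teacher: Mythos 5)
Your proposal is correct and follows essentially the same route as the paper: both arguments rest on Corollary~\ref{decomp}, the semisimplicity of $\bsH$ (hence of $\bar\bsH$ and $\bsS$), and a Schur's-lemma/double-centralizer computation identifying $\operatorname{End}_{\bsS}(\bsT(m|n,r))$ with $\bigoplus_{\nu\in\Lambda^+(r)_{m|n}}\operatorname{End}_{\mathbb Q(\up)}(S^{\nu}_{\mathbb Q(\up)})\cong\bar\bsH$. The only cosmetic difference is that the paper closes the second equality by a dimension comparison ($\dim\bar\bsH\ge d=\dim\operatorname{End}_{\bsS}$ together with the obvious containment) and gets the equivalence by exhibiting the inverse functor $\operatorname{Hom}_{\bsS}(-,\bsT(m|n,r))$, whereas you identify both sides as matrix-algebra sums and invoke Morita theory --- the same content.
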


\begin{proof} First, as a quotient of a semisimple algebra, $\bar\bsH$ is semisimple.
By Corollary \ref{decomp}, 
$S^{\nu}_{\mathbb Q(\up)}$, $\nu\in\Lambda^+(r)_{m|n}$,  are non-isomorphic
irreducible $\bar\bsH$-modules. Thus, dim$\bar\bsH\geq
d:=\sum_{\nu\in\Lambda^+(r)_{m|n}}(\text{dim}S^{\nu}_{\mathbb
Q(\up)})^2.$ On the other hand, Corollary \ref{decomp} implies that
dim\,$\text{\rm End}_{\bsS}(\bsT(m|n,r))=d$. Hence, a dimensional
comparison forces $\bar\bsH=\text{\rm End}_{\bsS}(\bsT(m|n,r))$. The
rest of the proof is clear by noting that the inverse functor of
$\text{\rm Hom}_{\bar\bsH}(-,\bsT(m|n,r))$ is $\text{\rm
Hom}_\bsS(-,\bsT(m|n,r))$.
\end{proof}

We now relate the quantum Schur superalgebras with the quantum enveloping superalgebra $\mathbf U_\up^\sigma(\mathfrak{gl}(m|n))$.
We use the quantum superspace $V(m|n)$ considered in \cite{Manin} and \cite{Mit}.

Let $V(m|n)$ be a free $\sZ$-module of rank $m+n$ with basis $e_1, e_2,
\ldots, e_{m+n}$. The map by setting $\hat i=0$ if $1\le i\le m$, and $\hat i=1$
otherwise, as given in \eqref{hatmap} yields a $\mathbb Z_2$-grading on $V(m|n)=V_0\oplus V_1$ where
$V_0$ is spanned by $e_1, e_2,
\ldots, e_m$ and $V_1$ by $e_{m+1}, e_{m+2},
\cdots, e_{m+n}$. Thus, $V(m|n)$ becomes a ``superspace''.

Let $\check \sR: V(m|n)^{\otimes 2} \rightarrow  V(m|n)^{\otimes2}$ be defined by
\begin{equation}\label{rmatrix}(e_c\otimes
e_d)\check{\sR} =\begin{cases} v e_c\otimes e_c,
&\text{if $ c=d\le m$,}\\
-v^{-1} e_c\otimes e_c &\text{if $m+1\le c=d$,}\\
(-1)^{\hat c\hat d}  e_d\otimes e_c+(v-v^{-1}) e_c\otimes e_d,
&\text{if
$c>d$,}\\
(-1)^{\hat c \hat d}  e_d\otimes e_c, &\text{if $c<d$.}\\
\end{cases}
\end{equation}
The following result is proved in \cite[Th2.1]{Mit}.

\begin{Lemma}\label{key11} If we define linear operator
$$\check{\sR_i}=\id^{\otimes{i-1}}\otimes \check{\sR}\otimes \id^{r-i-1}: V(m|n)^{\otimes r}\rightarrow V(m|n)^{\otimes r},$$ then
\begin{enumerate} \item[(1)] $(\check{\sR}_i-v) (\check
{\sR}_i+v^{-1})=0$.

\item[(2)] $\check \sR_i \check \sR_j=\check \sR_j \check \sR_i$ if $1\le i<j\le
r-1$.

\item[(3)] $\check \sR_i\check \sR_{i+1}\check \sR_i=\check \sR_{i+1} \check \sR_i
\check \sR_{i+1}$ for any $1\le i\le r-2$.
\end{enumerate}
\end{Lemma}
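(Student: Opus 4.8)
\emph{Proof proposal.} The plan is to reduce all three statements to small tensor powers and then verify them by direct computation on basis vectors, using that $\check{\sR}_i=\id^{\otimes(i-1)}\otimes\check{\sR}\otimes\id^{\otimes(r-i-1)}$ acts as the identity on every tensor slot other than the $i$-th and $(i+1)$-st.

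For part (1) it suffices to show $(\check{\sR}-v)(\check{\sR}+v^{-1})=0$ as an operator on $V(m|n)^{\otimes 2}$, which one checks on each $e_c\otimes e_d$ from \eqref{rmatrix}. If $c=d\le m$ then $\check{\sR}$ acts by the scalar $v$, and if $c=d\ge m+1$ it acts by $-v^{-1}$; in either case one of the two factors annihilates $e_c\otimes e_d$. If $c\neq d$, then $\check{\sR}$ preserves the two-dimensional subspace spanned by $e_c\otimes e_d$ and $e_d\otimes e_c$, on which, ordering the basis so that the larger index comes first, its matrix is $\bigl(\begin{smallmatrix} v-v^{-1} & (-1)^{\hat c\hat d}\\ (-1)^{\hat c\hat d} & 0\end{smallmatrix}\bigr)$, with trace $v-v^{-1}$ and determinant $-1$; hence its characteristic polynomial is $t^2-(v-v^{-1})t-1=(t-v)(t+v^{-1})$, giving the quadratic relation. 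Note that the Koszul signs $(-1)^{\hat c\hat d}$ occur only off the diagonal, so their squares are $1$ and the parities are irrelevant for (1). Part (2) is immediate: for $j\ge i+2$ the operators $\check{\sR}_i$ and $\check{\sR}_j$ act nontrivially on disjoint pairs of tensor factors, hence commute.

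For part (3), the same reduction shows it is enough to prove $\check{\sR}_1\check{\sR}_2\check{\sR}_1=\check{\sR}_2\check{\sR}_1\check{\sR}_2$ on $V(m|n)^{\otimes 3}$, which I would check on each $e_a\otimes e_b\otimes e_c$. I would organize the verification by the pattern of coincidences among $a,b,c$ (all equal; exactly two equal, in each of the three positions; all distinct) and, in the distinct case, by the relative order of $a,b,c$; in each case both sides are expanded using \eqref{rmatrix}, recording the sign $(-1)^{\hat x\hat y}$ whenever a pair $e_x\otimes e_y$ is transposed and the extra $(v-v^{-1})$-term whenever $\check{\sR}$ hits a descending pair.

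The main obstacle is precisely this sign and term bookkeeping in the case of three distinct indices: one must confirm that for every ordering the total sign accumulated along $\check{\sR}_1\check{\sR}_2\check{\sR}_1$ coincides with that along $\check{\sR}_2\check{\sR}_1\check{\sR}_2$, and that the lower-order contributions coming from the $(v-v^{-1})$-terms match on the two sides. This can be pushed through directly; alternatively one can deduce it from the fact that $\check{\sR}$ is, up to normalization, the $R$-matrix attached to the quantum general linear supergroup of Manin \cite{Manin}, for which the Yang--Baxter equation holds on general grounds, so that the braid relation for the $\check{\sR}_i$ follows formally. (Since eigenvalues $v$ and $-v^{-1}$ rescale to $v^2$ and $-1$, the map $T_i\mapsto v\check{\sR}_i$ then realizes the Hecke relations \eqref{heckea} on $V(m|n)^{\otimes r}$.)
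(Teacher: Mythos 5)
The paper does not actually prove this lemma --- it simply cites \cite[Th.~2.1]{Mit} --- so there is no internal argument to compare against; your direct verification on basis vectors is the natural (and essentially the only) route. Your treatment of (1) is complete and correct: the two-dimensional invariant subspaces for $c\neq d$ do have characteristic polynomial $t^2-(v-v^{-1})t-1=(t-v)(t+v^{-1})$, the diagonal vectors are eigenvectors with eigenvalues $v$ and $-v^{-1}$, and you rightly observe that $\check{\sR}$ is an \emph{even} operator, so no Koszul signs intrude into the definition of $\check{\sR}_i$ or into (2), which is indeed immediate for $j\geq i+2$ (the hypothesis ``$i<j$'' in the statement is a typo for $j-i\geq 2$, exactly as in \eqref{heckea}).

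The one place where your proposal stops short of a proof is (3). You correctly identify the case division and the bookkeeping to be done, but you do not carry out the computation, and the fallback --- that the Yang--Baxter equation ``holds on general grounds'' for Manin's $R$-matrix --- is not an argument unless you point to a specific proved instance of it for this normalization and this sign convention; as written it risks circularity, since the braid relation for $\check{\sR}_i$ \emph{is} the Yang--Baxter equation in this setting. The generic all-distinct ascending case is reassuring (on $e_a\otimes e_b\otimes e_c$ with $a<b<c$ both sides produce $(-1)^{\hat a\hat b+\hat a\hat c+\hat b\hat c}e_c\otimes e_b\otimes e_a$ with no lower-order terms), but the cases with descents and with repeated indices, where the $(v-v^{-1})$-terms proliferate, are precisely where sign errors hide and precisely the content of \cite[Th.~2.1]{Mit}. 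Either execute those cases or replace the ``general grounds'' remark with the citation; as it stands, part (3) is a plan rather than a proof.
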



Consider a new basis for $\sH$ by setting $\sT_w=v^{-l(w)}T_w$.
Then, $\mathcal H$  is an associative $\sZ$-algebra generated by $\sT_i=\up^{-1} T_i, 1\le i\le r-1$
subject to the relations
\begin{equation}\label{heckea}\begin{cases}
 (\sT_i-v) (\sT_i+v^{-1})=0, & \text{for $1\le i\le
r-1$.}\\
  \sT_i \sT_j=\sT_j \sT_i, &\text{for $1\le
i<j\le r-1$,} \\
 \sT_i \sT_{i+1} \sT_i=\sT_{i+1} \sT_i
\sT_{i+1}, &\text{for  $1\le i\le r-2$.}\\
\end{cases}
\end{equation}

 Let
 \begin{equation}\label{Imnr}
I(m|n,r)=\{\mathbf i=(i_1,
i_2, \cdots, i_{r})\in\mathbb N^r\mid 1\le i_j\le m+n\,\forall j\},
\end{equation}
and, for $\mathbf i\in I(m|n,r)$, let
$$e_{\mathbf i}=e_{i_1}\otimes e_{i_2}\otimes \cdots \otimes e_{i_r}.$$
Clearly, the set $\{e_{\mathbf i}\}_{\mathbf i\in I(m|n,r)}$ form a basis for
$V(m|n)^{\otimes r}$.

 For each $\mathbf
i\in I(m|n, r)$,  define $\lambda|\mu\in \Lambda(m|n, r)$ to be the weight $\wt(\mathbf
i)$ of $\mathbf i$ by setting
$$\begin{cases} & \lambda_k =\#
\{k\colon i_j=k, 1\le j\le r\}, \forall 1\le k\le m \\
& \mu_k  =\# \{m+k\colon i_j=m+k, 1\le j\le r\}, \forall 1\le k\le n.\\
\end{cases} $$
 For each $\lambda|\mu\in \Lambda(m|n, r)$, define
$\mathbf i_{\lambda|\mu}\in I(m|n, r)$ by
$$ \bold i_{\lambda|
\mu}=(\underset{\lambda_1}{\underbrace {1, \cdots, 1}}
\cdots,\underset{\lambda_m}{\underbrace { m\cdots, m}},
\underset{\mu_1}{\underbrace {m+1,\cdots, m+1}}\cdots
\underset{\mu_n}{\underbrace {m+n, \cdots, m+n}})$$

The  symmetric group  $\mathfrak S_r$
acts on $I(m|n, r)$ by place permutation:
\begin{equation} \label{rightaction} \mathbf i w=(i_{w(1)},
i_{w(2)}, \cdots, i_{w(r)}).\end{equation}
Clearly, the weight function $\wt$ induces a bijection between
the $\mathfrak S_r$-orbits and $\Lambda(m|n,r)$.

\begin{Prop}\label{iso} The tensor superspace
$V(m|n)^{\otimes r}$ is a right $\mathcal H$-module,
and is isomorphic to the $\sH$-module $\fT(m|n,r)=\bigoplus_{(\lambda, \mu)\in \Lambda} x_\lambda y_\mu \sH$.
\end{Prop}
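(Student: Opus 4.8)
The plan is to turn $V(m|n)^{\otimes r}$ into a right $\sH$-module by letting $\sT_i$ act as $\check{\sR}_i$; this is legitimate precisely because Lemma~\ref{key11} says the operators $\check{\sR}_i$ satisfy exactly the defining relations \eqref{heckea} of the generators $\sT_i$ of $\sH$. With this action I would match the two modules summand by summand along the $\fS_r$-orbits on $I(m|n,r)$. Since the weight function $\wt$ is constant on orbits, for each $\la|\mu\in\Lambda(m|n,r)$ write $V_{\la|\mu}$ for the $\ZC$-span of the vectors $e_{\mathbf j}$ with $\mathbf j$ in the $\fS_r$-orbit of $\mathbf i_{\la|\mu}$. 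Reading off the four cases of \eqref{rmatrix} one sees that $\check{\sR}_i$ maps each $e_{\mathbf j}$ back into $V_{\wt(\mathbf j)}$: in the two diagonal cases $e_{\mathbf j}\check{\sR}_i$ is a scalar multiple of $e_{\mathbf j}$, and otherwise it is a $\ZC$-combination of $e_{\mathbf j}$ and $e_{\mathbf j s_i}$, which have the same weight. Hence $V(m|n)^{\otimes r}=\bigoplus_{\la|\mu\in\Lambda(m|n,r)}V_{\la|\mu}$ as right $\sH$-modules, and it suffices to produce an $\sH$-isomorphism $x_\la y_\mu\sH\cong V_{\la|\mu}$ for each $\la|\mu$.

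The key vector is $e_{\mathbf i_{\la|\mu}}$. Again from \eqref{rmatrix}: a pair of adjacent positions $j,j+1$ lying in a block of $\mathbf i_{\la|\mu}$ of equal entries $\le m$ gives $e_{\mathbf i_{\la|\mu}}\sT_s=v\,e_{\mathbf i_{\la|\mu}}$ for the corresponding $s\in\la^*$ (first case of \eqref{rmatrix}), while a block of equal entries $\ge m+1$ gives $e_{\mathbf i_{\la|\mu}}\sT_t=-v^{-1}e_{\mathbf i_{\la|\mu}}$ for the corresponding $t\in{}^*\!\mu$ (second case); equivalently $e_{\mathbf i_{\la|\mu}}T_s=q\,e_{\mathbf i_{\la|\mu}}$ and $e_{\mathbf i_{\la|\mu}}T_t=-e_{\mathbf i_{\la|\mu}}$. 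Using $\fS_{\la|\mu}=\fS_{\la^*}\fS_{{}^*\!\mu}$ and the length additivity $l(wd)=l(w)+l(d)$ for $w\in\fS_{\la|\mu}$, $d\in\fD_{\la|\mu}$, these relations allow one to rewrite $e_{\mathbf i_{\la|\mu}}h$, for any $h\in\sH$, as a $\ZC$-combination of the $e_{\mathbf i_{\la|\mu}}T_d$ with $d\in\fD_{\la|\mu}$, by exactly the same scalars that rewrite $x_\la y_\mu h$ as a combination of the $x_\la y_\mu T_d$ (compare the proof of Lemma~\ref{ABC}). It follows that $x_\la y_\mu h\mapsto e_{\mathbf i_{\la|\mu}}h$ is well defined, giving a homomorphism of right $\sH$-modules $\theta_{\la|\mu}\colon x_\la y_\mu\sH\to V_{\la|\mu}$ with $\theta_{\la|\mu}(x_\la y_\mu T_d)=e_{\mathbf i_{\la|\mu}}T_d$.

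To see $\theta_{\la|\mu}$ is bijective I would establish the single computational fact that $e_{\mathbf i_{\la|\mu}}\sT_d=\pm e_{\mathbf i_{\la|\mu}d}$ for every $d\in\fD_{\la|\mu}$, by induction along a reduced expression $d=s_{j_1}\cdots s_{j_l}$. As $\fD_{\la|\mu}$ is closed under passing to left factors, every partial product $d^{(k)}=s_{j_1}\cdots s_{j_k}$ again lies in $\fD_{\la|\mu}$; at the $k$-th step the current vector is, up to sign, $e_{\mathbf i_{\la|\mu}d^{(k-1)}}$, and since $l(d^{(k-1)}s_{j_k})>l(d^{(k-1)})$, together with $d^{(k-1)},d^{(k)}$ representing distinct $\fS_{\la|\mu}$-cosets and the weak increase of $\mathbf i_{\la|\mu}$, forces the two entries of $\mathbf i_{\la|\mu}d^{(k-1)}$ in positions $j_k,j_k+1$ to be strictly increasing, the relevant instance of \eqref{rmatrix} is the last case, which merely transposes those two entries up to a sign. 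Because $\{x_\la y_\mu T_d\}_{d\in\fD_{\la|\mu}}$ is a $\ZC$-basis of $x_\la y_\mu\sH$ by Lemma~\ref{ABC}(1), and $d\mapsto\mathbf i_{\la|\mu}d$ is a bijection from $\fD_{\la|\mu}$ onto the orbit of $\mathbf i_{\la|\mu}$, the map $\theta_{\la|\mu}$ sends this basis onto $\{\pm v^{l(d)}e_{\mathbf i_{\la|\mu}d}\}_{d\in\fD_{\la|\mu}}$, which is a basis of $V_{\la|\mu}$ up to units; hence $\theta_{\la|\mu}$ is an isomorphism. Summing over $\la|\mu\in\Lambda(m|n,r)$ and invoking the orbit decomposition yields $\fT(m|n,r)\cong V(m|n)^{\otimes r}$ as right $\sH$-modules.

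The only step I expect to require genuine work is the displayed identity $e_{\mathbf i_{\la|\mu}}\sT_d=\pm e_{\mathbf i_{\la|\mu}d}$ together with its descent bookkeeping; everything else is formal once the module structure is in place. This identity is, however, entirely parallel to the non-super $q$-Schur algebra case (where one shows $x_\la\sH\cong V_\la$ inside $V_n^{\otimes r}$), and the genuinely "super" ingredients — the signs $(-1)^{\hat c\hat d}$ in \eqref{rmatrix} and the normalization $\sT_i=v^{-1}T_i$ — enter only as harmless units, affecting neither well-definedness nor bijectivity.
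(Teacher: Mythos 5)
Your proof is correct and follows essentially the same route as the paper: both decompose $V(m|n)^{\otimes r}$ along the $\fS_r$-orbits of $I(m|n,r)$ and identify each orbit summand with $x_\la y_\mu\sH$ by matching the basis $\{x_\la y_\mu T_d\}_{d\in\fD_{\la|\mu}}$ with $\{e_{\mathbf i_{\la|\mu}d}\}_{d\in\fD_{\la|\mu}}$ up to units. The only organizational difference is that the paper checks equivariance by comparing the four-case generator actions \eqref{struc0} and \eqref{struc2} on every basis vector, recording the sign $(-1)^{\hat d}$ explicitly (which it reuses later), whereas you derive the homomorphism from cyclicity together with the eigenvalue relations at $e_{\mathbf i_{\la|\mu}}$ and then only need the swap case of \eqref{rmatrix} along reduced expressions of distinguished coset representatives.
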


\begin{proof} By defining an action of  $\sT_i$ on $V(m|n)^{\otimes r}$ via $\check {\sR_i}$, the first assertion follows from Lemma~\ref{key11}.

For any $\mathbf i\in I(m|n, r)$ with $\wt(\mathbf
i)=\lambda|\mu$, we have $\mathbf i=\mathbf i_{\lambda|\mu} d$ where
$d$ is the unique element in $\mathfrak D_{\lambda|\mu}$. Write
$\mathbf i=(i_1, i_2, \cdots, i_r)$ and $(j_1, j_2, \cdots,
j_r)=\mathbf i_{\lambda|\mu}$. Then $i_k=j_{d (k)}$ for all $k$. By definition, we have $\hat
j_k=0$ if $k\le |\lambda|$ and $\hat j_k=1$ if $k> |\lambda|$. Also,
 $j_k\le j_l$ whenever $k\le l$. For any $d\in \mathfrak
D_{\lambda|\mu}$ with $\mathbf i=\mathbf i_{\lambda|\mu} d$, define
\begin{equation}\label{dhat}
  \hat d=\sum_{k=1}^r \sum_{\substack {k<l,\\
i_k>i_l}} \hat i_k \hat i_l.
\end{equation}
Thus, \eqref{rmatrix} implies
\begin{equation}\label{struc0}  (-1)^{\hat d} e_{\bold i}
\sT_k=\begin{cases}(-1)^{\hat d}(-1)^{\hat i_k\hat i_{k+1}} e_{\bold is_k},
& \text{ if }i_k<i_{k+1}; \\
v(-1)^{\hat d} e_{\bold i} , & \text{ if }i_k=i_{k+1}\le m;\\
-v^{-1} (-1)^{\hat d} e_{\bold i},&  \text{ if }i_k=i_{k+1}\ge m+1;\\
(-1)^{\hat d}(-1)^{\hat i_k\hat i_{k+1}} e_{\bold is_k} + (v-v^{-1})(-1)^{\hat d} e_{\bold i}, & \text{ if }i_k>i_{k+1},\\
\end{cases}
\end{equation}
where $s_k=(k,k+1)$. On the other hand,
\begin{equation}\label{struc2}
x_{\lambda } y_{\mu} \sT_d
\sT_k=\begin{cases}x_{\lambda } y_{\mu} \sT_{ds_k},
& \text{ if $d s_k\in \mathfrak
D_{\lambda|\mu}$, }\\
v x_\lambda y_\mu \sT_d, & \text{ if $d s_k=s_l d, s_l\in \mathfrak S_\lambda$,}\\
-v^{-1} x_\lambda y_\mu \sT_d,&  \text{ if $d s_k=s_l d, s_l\in \mathfrak S_\mu$,}\\
x_\lambda y_\mu \sT_{ds_k} + (v-v^{-1})x_\lambda y_\mu \sT_{d}, & \text{ if $ds_k<d$.}\\
\end{cases}
\end{equation}
Since $(-1)^{\hat d}(-1)^{\hat i_k\hat i_{k+1}}=
\widehat{ds_k}$ in the first and last case of \eqref{struc0}, it follows that the $\sZ$-linear map
\begin{equation}\label{VH}
f: V(m|n)^{\otimes r} \rightarrow
\bigoplus_{(\lambda, \mu)\in \Lambda} x_\lambda y_\mu \mathcal H_R:
 (-1)^{\hat d} e_{\bold i_{\la|\mu}d} \mapsto x_\lambda y_\mu \sT_d
 \end{equation}
 is a right $\mathcal H$-module homomorphism.
\end{proof}

\begin{Cor}\label{TensorSSA} There is a superalgebra isomorphism
$$\sS(m|n,r)\cong\text{\rm End}_\sH(V(m|n)^{\otimes r}).$$
\end{Cor}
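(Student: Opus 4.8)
The plan is to read the isomorphism off Proposition~\ref{iso}. That proposition already furnishes an isomorphism of right $\sH$-modules $f\colon V(m|n)^{\otimes r}\to\fT(m|n,r)$, namely the map \eqref{VH}, while by Definition~\ref{ssch} the algebra $\sS(m|n,r)$ is precisely $\End_\sH(\fT(m|n,r))$ equipped with the supermultiplication \eqref{supermult}. So the first step is the purely formal remark that conjugation by $f$, that is the assignment $\phi\mapsto f\circ\phi\circ f^{-1}$, is an isomorphism of ordinary $\sZ$-algebras $\End_\sH(V(m|n)^{\otimes r})\to\sS(m|n,r)$, where on both sides the product is composition of $\sH$-endomorphisms.

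The second step is to verify that this isomorphism respects the $\mathbb Z_2$-gradings. On $V(m|n)^{\otimes r}$ the superspace grading makes $e_{\mathbf i}$ homogeneous of parity $\hat i_1+\cdots+\hat i_r$, and I would observe that if $\wt(\mathbf i)=\la|\mu$ then exactly $|\mu|$ of the entries $i_j$ exceed $m$, so this parity is $|\mu|$ modulo $2$. Hence $f$ carries the parity-$i$ summand of $V(m|n)^{\otimes r}$ onto $\fT(m|n,r)_i=\bigoplus_{|\mu|\equiv i}x_\la y_\mu\sH$; in other words $f$ is an isomorphism of $\mathbb Z_2$-graded right $\sH$-modules, and therefore conjugation by $f$ preserves the degree of a homogeneous endomorphism. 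The one point deserving a moment's care is the sign twist $(-1)^{\hat d}$ built into the definition of $f$: it causes no trouble, since it was already absorbed into making $f$ an $\sH$-module map in Proposition~\ref{iso} and it merely rescales each basis vector within a fixed homogeneous component.

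The last step is to upgrade this ungraded algebra isomorphism to one of superalgebras. The endomorphism algebra of a $\mathbb Z_2$-graded $\sH$-module carries the supermultiplication got from ordinary composition by inserting the Koszul sign $(-1)^{|\phi||\psi|}$ on homogeneous elements --- for $\sS(m|n,r)$ this is exactly \eqref{supermult} --- and since conjugation by $f$ commutes with composition and preserves degrees, that sign is unchanged, so the map also intertwines the two super products; this is a one-line check. I do not expect a genuine obstacle: the entire weight of the statement lies in Proposition~\ref{iso}, and all that remains is the parity bookkeeping $\hat i_1+\cdots+\hat i_r\equiv|\mu|$ together with the trivial observation that a degree-preserving isomorphism between endomorphism superalgebras automatically respects the super twist.
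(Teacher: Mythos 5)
Your proof is correct and follows the same route as the paper: the paper states this corollary without a separate proof precisely because it is the immediate consequence of the module isomorphism $f$ of Proposition~\ref{iso} together with Definition~\ref{ssch}, i.e.\ conjugation by $f$. Your added bookkeeping --- that $e_{\mathbf i}$ with $\wt(\mathbf i)=\la|\mu$ has parity $|\mu|\pmod 2$, matching the grading on $\fT(m|n,r)$, so the Koszul sign in \eqref{supermult} is preserved --- is exactly the (routine) verification the paper leaves implicit.
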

Hence, the quantum Schur superalgebra defined in \ref{ssch} is the same algebra considered in \cite{Mit}.

\begin{rem}Let $\bfU(m|n)=\bfU_\up^\sigma(\mathfrak{gl}(m,n))$ be the quantum enveloping superalgebra defined in \cite[\S3]{Mit}. Then $\bfU(m|n)$ acts naturally on $\mathbf V(m|n)^{\otimes r}$, where
 $\mathbf V(m|n)^{\otimes r}=V(m|n)^{\otimes r}\otimes\mathbb Q(\up)$. By \cite[Th.~4.4]{Mit}, $\bfU(m|n)$ maps onto the algebra $\text{End}_\bsH(\mathbf V(m|n)^{\otimes r})$. Now, Corollary \ref{TensorSSA} and Theorem \ref{DCP} implies
the Schur--Weyl reciprocity between $\bfU(m|n)$ and $\bsH$ as described in \cite[Th.~4.4]{Mit}.
\end{rem}

\section{Relation with quantum matrix superalgebras}

Like quantum Schur algebras, quantum Schur superalgebras $\sS(m|n,r)$
can also be interpreted as the dual algebra of the $r$th homogeneous
component $\sA(m|n,r)$ of the quantum matrix superalgebra
$\sA(m|n)$. We first recall the following definition which is a
special case of quantum superalgebras with multiparameters  defined
by Manin~\cite[1.2]{Manin}. For simplicity, we assume throughout the
section that $\field$ is a {\it field} of  characteristic
$\text{char}(\field)\neq 2$ and  $v\in \field$.

\begin{Defn}\label{quantumco}  Let $\sA(m|n)$ be the associative
superalgebra over $\field$ generated by  $x_{ij}$, $1\le i, j\le m+n$
subject to the following relations:
\begin{enumerate}
\item[(1)] $ x_{i, j}^2=0$,  for $\hat{i}+ \hat{j}=1$;
\item[(2)] $ x_{ij}x_{ik}=(-1)^{(\hat i+\hat j)(\hat i+ \hat k)}
v^{(-1)^{\hat i+1}} x_{ik}x_{ij}$, for $ j<k$;
\item[(3)] $ x_{ij}x_{kj}=(-1)^{(\hat i+ \hat j)(\hat k+ \hat j)} v^{(-1)^{\hat j+1}} x_{kj}x_{ij}$, for $ i<k$;
\item[(4)]  $ x_{ij}x_{kl}=(-1)^{(\hat i+\hat j)(\hat k+ \hat l)} x_{kl}x_{ij}$, for $i<k$ and
$j>l$;
\item[(5)] $x_{ij}x_{kl}=(-1)^{(\hat i+ \hat j) (\hat k+ \hat l)}
x_{kl}x_{ij}+(-1)^{\hat k \hat j+ \hat k \hat l + \hat j \hat
l}(v^{-1}-v) x_{il}x_{kj}$, for $i<k$ and $j<l$.
\end{enumerate}
\end{Defn}

Manin~\cite{Manin} proved that $\sA(m|n)$ has also a
supercoalgebra structure with comultiplication $\Delta: \sA(m|n)\rightarrow
\sA(m|n)\otimes \sA(m|n)$ and co-unit $\epsilon: \sA(m|n)\rightarrow
\field$ defined by
\begin{equation}\label{comulti}
\Delta(x_{ik})=\sum_{j=1}^{m+n} x_{ij}\otimes x_{jk}, \text{ and }
\epsilon(x_{ij})=\delta_{ij}, \forall 1\le i, j, k\le m+n.
\end{equation}
Further, the $\mathbb Z_2$ grading degree of $x_{ij}$ is $\hat
i+\hat j\in \mathbb Z_2$. The following result is a special case of
~\cite[Th.~1.14]{Manin}.

\begin{thm}\label{coor} Suppose $v^2\neq -1$ in $\field$. Then $\sA(m|n)$ has basis
$$\mathcal B=\left\{\prod_{i,j}
x_{i,j}^{a_{i,j}}\colon a_{ij}\in \mathbb N,\text{ and } a_{ij}\in\{0,1\}
\text{ whenever }\hat i+\hat j=1 \right\},$$ where the order of
$x_{i,j}$ is arranged such that $x_{i,j}$ is the left to $x_{k,l}$ if
either $i<k$ or $i=k$ and $j<l$.
\end{thm}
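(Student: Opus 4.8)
The plan is to derive this from Manin's structure theorem and, for a self-contained treatment, to run Bergman's Diamond Lemma directly on the presentation in Definition~\ref{quantumco}. First I would fix the total order on the generators by declaring $x_{i,j}\prec x_{k,l}$ if and only if $i<k$, or $i=k$ and $j<l$, and extend it to the degree-lexicographic order on words. Each defining relation is then oriented so that its $\prec$-larger monomial is the leading term: relation (1) becomes the rule $x_{i,j}^2\to 0$ for $\hat i+\hat j=1$ (so that powers of even generators remain unconstrained, matching the shape of $\mathcal B$), and a ``descent'' $x_{a,b}x_{c,d}$ with $(a,b)\succ(c,d)$ falls into exactly one of the four patterns $a=c$ and $b>d$; or $a>c$ and $b=d$; or $a>c$ and $b<d$; or $a>c$ and $b>d$, which are handled by relations (2), (3), (4), (5) respectively. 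The first three rewrite such a descent as an invertible scalar times the sorted monomial $x_{c,d}x_{a,b}$, while relation (5) rewrites it as such a scalar multiple plus an extra summand $x_{c,b}x_{a,d}$, which is itself already sorted since $c<a$. All scalars occurring are invertible as soon as $v\ne0$, and I expect the hypothesis $v^2\ne-1$ (equivalently $v+v^{-1}\ne0$) to be consumed only in the compatibility check described below.

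Next I would dispose of the spanning statement, which is the easy half: each rule either lowers the total degree (relation (1), to zero) or preserves it while strictly decreasing the number of $\prec$-inversions, so the rewriting procedure terminates and expresses every monomial as a $\field$-linear combination of elements of $\mathcal B$; an induction on the pair $(\text{degree},\#\text{inversions})$ finishes it. Linear independence is the substantive half. By the Diamond Lemma it suffices to show that every overlap ambiguity is resolvable, i.e.\ that for each triple of generators with $(a,b)\succeq(c,d)\succeq(e,f)$ --- the only admissible overlaps, since all leading monomials are quadratic and the square relation involves only odd generators --- the two reductions of $x_{a,b}x_{c,d}x_{e,f}$ yield the same normal form. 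There are finitely many combinatorial types, organized by which row and column indices coincide, and each is settled by a direct computation with relations (1)--(5). Granting resolvability, the Diamond Lemma gives that $\mathcal B$ is a $\field$-basis of $\sA(m|n)$. Alternatively --- and this is the shortcut the statement alludes to --- one observes that Definition~\ref{quantumco} is an even/odd specialization of the multiparameter presentation in \cite[1.2]{Manin} for which, under $v^2\ne-1$, the hypotheses of \cite[Th.~1.14]{Manin} are satisfied, and that theorem delivers the asserted basis at once.

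The hard part will be the ambiguity resolution in the linear-independence step, concentrated in the overlap triples in which the $(v^{-1}-v)$-correction of relation (5) is produced and must then be re-sorted: these are the only places where a genuine calculation is required and where $v^2\ne-1$ is actually used --- iterating the correction term brings a factor $v+v^{-1}$ to the surface, which must be invertible for the normal forms on the two sides to agree. Everything else --- the choice of order, termination, the spanning statement, and the remaining ambiguities --- is routine bookkeeping. Should that calculation prove too lengthy to present cleanly, I would instead cite \cite[Th.~1.14]{Manin} and confine the argument to checking that the parameter conventions there match those of Definition~\ref{quantumco}.
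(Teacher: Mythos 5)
The paper does not actually prove this statement: it is introduced with the single sentence ``The following result is a special case of \cite[Th.~1.14]{Manin}'', so the paper's ``proof'' is exactly the citation you offer as your fallback. Your primary route --- orienting the relations of Definition~\ref{quantumco} as a rewriting system for the row-major order and invoking Bergman's Diamond Lemma --- is a genuinely different, self-contained strategy, and its combinatorial skeleton is correct: your four-way classification of descents against relations (2)--(5) is right, the correction term $x_{c,b}x_{a,d}$ produced by (5) is indeed already sorted, termination via (degree, number of inversions) gives spanning, and the only ambiguities are the cubic overlaps you describe. What this route buys is independence from Manin's machinery and a proof valid over any ring where the relevant scalars are invertible; what it costs is that the entire mathematical content now sits in the unperformed overlap computations, so as written the proposal is a plan rather than a proof. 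One specific caveat: your prediction that $v^2\neq-1$ surfaces as a factor $v+v^{-1}$ in an iterated correction term is speculative and does not match where the hypothesis enters Manin's argument --- there it is the requirement that the two eigenvalues $v$ and $-v^{-1}$ of the braiding $\check\sR$ of \eqref{rmatrix} be distinct, so that the degree-two component of the tensor algebra splits as a direct sum of the quantum symmetric and exterior squares; in the overlaps one checks directly (e.g.\ the triple $x_{a,b}^2x_{e,f}$ with $x_{a,b}$ odd), the would-be obstruction cancels identically without any condition on $v$. So if you pursue the Diamond Lemma route you should either locate precisely which ambiguity fails at $v^2=-1$ or be prepared to find that the hypothesis is an artifact of Manin's method; if you take the fallback, you should, as you say, verify that the sign and parameter conventions of Definition~\ref{quantumco} match the even/odd specialization of \cite[1.2]{Manin}, since that verification is the only content the paper itself omits.
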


For each $A=(a_{ij})\in M(m+n)$, define
\begin{equation}\label{matrix} x^A= x_{1,1}^{a_{1,1}}
x_{1,2}^{a_{1,2}}\cdots, x_{1,m+n}^{a_{1,m+n}}
x_{2,1}^{a_{2,1}}\cdots x_{m+n, m+n}^{a_{m+n,m+n}}\end{equation} By
Definition~\ref{quantumco}(a) and Theorem~\ref{coor}, $x^A\not=0$ if and
only if $A\in M(m|n)$. Thus, $\mathcal B=\{x^A\mid A\in M(m|n)\}$.

The bialgebra $\sA(m|n)$ is an $\mathbb N$-graded algebra such that each $x_{ij}$ has
degree $1$. Let $\sA(m|n, r)$ be the subspace of $\sA(m|n)$
spanned by monomials of degree $r$. The following result follows immediately.

\begin{Cor}\label{base} Suppose $v^2\neq -1$ in $\field$. The set
$\mathcal B_r=\{x^A\colon A\in M(m|n, r)\}$ forms an $\field$-basis for the coalgebra $\sA(m|n,r)$.\end{Cor}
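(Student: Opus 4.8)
The plan is to deduce this directly from Theorem~\ref{coor} by splitting the basis $\mathcal B$ of $\sA(m|n)$ into its homogeneous components. First I would recall that, under the hypothesis $v^2\neq-1$, Theorem~\ref{coor} asserts that $\mathcal B=\{x^A\mid A\in M(m|n)\}$ is an $\field$-basis of $\sA(m|n)$, where $x^A$ is the ordered monomial defined in \eqref{matrix}. Next I would observe that $\sA(m|n)$ is $\mathbb N$-graded with each generator $x_{ij}$ in degree $1$; this grading is well defined because every defining relation in Definition~\ref{quantumco}(1)--(5) is homogeneous of degree $2$ (relation (1) kills a degree-$2$ monomial, and relations (2)--(5) equate sums of degree-$2$ monomials). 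Consequently $x^A$ is homogeneous of degree $\sum_{i,j}a_{ij}$, which equals $r$ precisely when $A\in M(m|n,r)$, by the very definition of $M(m+n,r)\supseteq M(m|n,r)$.

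Therefore the decomposition $\sA(m|n)=\bigoplus_{r\ge0}\sA(m|n,r)$ restricts the $\field$-basis $\mathcal B$ to a disjoint union $\mathcal B=\bigsqcup_{r\ge0}\mathcal B_r$ with $\mathcal B_r=\mathcal B\cap\sA(m|n,r)=\{x^A\mid A\in M(m|n,r)\}$, and since a basis of a graded vector space intersects each homogeneous component in a basis of that component, $\mathcal B_r$ is an $\field$-basis of $\sA(m|n,r)$. The supercoalgebra structure plays no role here beyond the fact that $\sA(m|n,r)$ inherits it; one may note that $\Delta$ and $\epsilon$ from \eqref{comulti} preserve the $\mathbb N$-grading, so each $\sA(m|n,r)$ is indeed a subcoalgebra, but this is not needed for the basis statement.

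There is essentially no obstacle: the only point requiring a word of care is confirming that the $\mathbb N$-grading is consistent with the relations, which is immediate by inspection of Definition~\ref{quantumco}. The argument is pure bookkeeping on top of Theorem~\ref{coor}.
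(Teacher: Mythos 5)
Your proof is correct and is exactly the argument the paper intends: the paper gives no explicit proof, saying only that the corollary ``follows immediately'' from Theorem~\ref{coor}, the definition of $x^A$ in \eqref{matrix}, and the $\mathbb N$-grading of $\sA(m|n)$, which is precisely the bookkeeping you carry out. The one point you rightly flag---that the grading is well defined because the relations in Definition~\ref{quantumco} are homogeneous---is the only thing that needed saying.
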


We will realize the linear  dual   $\sA(m|n, r)^\ast$ of $A(m|n, r)$
as the endomorphism algebra of the tensor space over the Hecke algebra $\sH_F$
associated to the symmetric group $\mathfrak S_r$. We start by
recalling some notations.

Let $I(m|n,r)$ be the set defined in \eqref{Imnr}.
The group  $\mathfrak S_r$ acts on  $I(m|n, r)\times
I(m|n, r)$ diagonally by  $(\mathbf i,
\mathbf j) w=(\mathbf i w, \mathbf j w)$ for any $w\in \mathfrak
S_r$ and $(\mathbf i, \mathbf j)\in I(m|n, r)\times I(m|n,r)$.
Then there is a bijection between the set of $\fS_r$-orbits and $M(m+n,r)$.
This is seen easily from the map $\jmath$ defined in \eqref{jmath--}:
if $\jmath(\la|\mu,w,\xi|\mu)=A$, where $w\in\fD_{\la|\mu,\xi|\mu}$,
then $A$ is mapped to the orbit containing
$(\mathbf i_{\lambda|\mu}w, \mathbf i_{\xi|\eta})$ or $(\mathbf i_{\lambda|\mu},\mathbf i_{\xi|\eta}w^{-1})$.

Let $x_{\mathbf i, \mathbf j}=x_{i_1, j_1}x_{i_2, j_2}\cdots x_{i_r,
j_r}$.   Since $x_{i,j}$ and $x_{k,l}$  do not commute each other, we
do not have $x_{\mathbf i, \mathbf j}= x_{\mathbf i w, \mathbf j w}$
for  $w\in \mathfrak S_r$, in general.
However, by \cite[8.6,9.6]{DDPW} or a direct argument, we have the
following.

\begin{Lemma}\label{rels}  If $A=(a_{ij})=\jmath(\la|\mu,w,\xi|\eta)\in M(m|n, r)$,
then $$x_{\mathbf i_{\lambda|\mu}, \mathbf i_{\xi|\eta}}^w:=x_{\mathbf i_{\lambda|\mu}, \mathbf i_{\xi|\eta}w^{-1}}=x^A.$$
Moreover,
$x_{\mathbf i_{\xi|\eta}w^{-1},\mathbf i_{\lambda|\mu}}=(-1)^{\widehat{w^{-1}}}x^{A^t},$ where
$\widehat{w^{-1}}$ is defined in \eqref{dhat}.
\end{Lemma}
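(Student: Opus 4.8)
The plan is to prove the two identities separately: the first is the super-analogue of the classical matrix description of a double coset, and the second follows from it once one isolates exactly which of the straightening relations of $\sA(m|n)$ are needed.

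\textbf{First identity.} Write $\nu=\la\vee\mu$ and $\rho=\xi\vee\eta$, so that $w$ is the shortest element of $\fS_\nu w\fS_\rho$ and $a_{ij}=|R_i^\nu\cap wR_j^\rho|$. Expanding by the definition of $x_{\mathbf i,\mathbf j}$,
$$x_{\mathbf i_{\lambda|\mu},\,\mathbf i_{\xi|\eta}w^{-1}}=\prod_{\ell=1}^{r}x_{p_\ell,q_\ell},$$
where $\ell\in R^\nu_{p_\ell}$ and $w^{-1}(\ell)\in R^\rho_{q_\ell}$, equivalently $\ell\in wR^\rho_{q_\ell}$. The sequence $(p_\ell)_\ell$ is weakly increasing because $\mathbf i_{\lambda|\mu}=(1^{\la_1},2^{\la_2},\dots)$ is sorted; and if $p_\ell=p_{\ell+1}$, so that $\ell,\ell+1$ lie in one block $R^\nu_i$, then minimality of $w$ gives $w^{-1}(\ell)<w^{-1}(\ell+1)$, hence $q_\ell\le q_{\ell+1}$. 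Thus the monomial is already written in the order of \eqref{matrix}, with equal generators consecutive, and collecting them yields $\prod_{i,j}x_{ij}^{a_{ij}}=x^A$ — no relation, and hence no sign, being involved.

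\textbf{Second identity.} Applying the first identity to $A^t=\jmath(\xi|\eta,w^{-1},\la|\mu)$ gives $x^{A^t}=x_{\mathbf i_{\xi|\eta},\,\mathbf i_{\lambda|\mu}w}$. Putting $g_a=x_{(\mathbf i_{\xi|\eta})_a,\,(\mathbf i_{\lambda|\mu})_{w(a)}}$, one has $x^{A^t}=g_1g_2\cdots g_r$ and $x_{\mathbf i_{\xi|\eta}w^{-1},\,\mathbf i_{\lambda|\mu}}=g_{w^{-1}(1)}g_{w^{-1}(2)}\cdots g_{w^{-1}(r)}$, so the two differ by a reordering of the same word of homogeneous generators. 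The key point is that this reordering uses \emph{only} relation~(4) of Definition~\ref{quantumco}, $x_{ij}x_{kl}=(-1)^{(\hat i+\hat j)(\hat k+\hat l)}x_{kl}x_{ij}$: the pairs of factors whose relative order must change are exactly those $g_a,g_b$ indexed by an inversion of $w$, and for such a pair $g_a$ and $g_b$ have distinct first indices (equal first indices would force $a,b$ into one block of $\rho$, against $w$ being increasing on blocks of $\rho$) and distinct second indices (equal second indices would force $w(a),w(b)$ into one block of $\nu$, against $w^{-1}$ being increasing on blocks of $\nu$), with the two inequalities running oppositely. So relations (2), (3), (5) — which carry powers of $v$ or an extra monomial — are never triggered, and therefore $x_{\mathbf i_{\xi|\eta}w^{-1},\,\mathbf i_{\lambda|\mu}}=(-1)^{e}x^{A^t}$ where $e\equiv\sum|g_a||g_b|\pmod 2$, summed over the inversions of $w$; as a Koszul sign this is independent of the sequence of interchanges performed.

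\textbf{The main obstacle} is the last step, matching $e$ with $\widehat{w^{-1}}$ as defined by \eqref{dhat}. One must reconcile the Koszul sign, which involves the parities of both strings $\mathbf i_{\xi|\eta}w^{-1}$ and $\mathbf i_{\lambda|\mu}$, with the inversion count of the single string $\mathbf i_{\xi|\eta}w^{-1}$ weighted by products of parities. My plan is again to exploit that $w$ is increasing on blocks of $\rho$ and $w^{-1}$ on blocks of $\nu$, so that the relevant inversions of $\mathbf i_{\xi|\eta}w^{-1}$ correspond bijectively to the inversions of $w$ contributing to $e$, and so that the terms involving the second index either occur in cancelling pairs or collapse modulo $2$ to $\widehat{w^{-1}}$; pinning down this $\mathbb Z_2$-bookkeeping (and the precise normalisation of $\widehat{w^{-1}}$) is where the care is needed.
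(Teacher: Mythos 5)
Your proof of the first identity is correct and is exactly the ``direct argument'' the paper alludes to, and your reduction of the second identity to a reordering of the word $x_{\mathbf i_{\xi|\eta}w^{-1},\,\mathbf i_{\la|\mu}}=g_{w^{-1}(1)}\cdots g_{w^{-1}(r)}$ using only relation (4) of Definition~\ref{quantumco} is also sound: the paper's own proof consists of writing this word out explicitly and saying that the assertion follows from relation \ref{quantumco}(4), so up to the sign computation your argument and the paper's coincide, with yours supplying the verification (distinct first indices, distinct second indices, opposite inequalities on every inversion pair, hence relations (2), (3), (5) never fire) that the paper omits.

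The step you defer, however, is where the proof genuinely breaks: the congruence $e\equiv\widehat{w^{-1}}\pmod 2$ is false in general, so no amount of $\mathbb Z_2$-bookkeeping will close the gap. What relation (4) actually produces is the full Koszul sign $e=\sum_{k<l,\ j_k>j_l}(\hat j_k+\hat i_k)(\hat j_l+\hat i_l)$, where $\mathbf j=\mathbf i_{\xi|\eta}w^{-1}$ and $\mathbf i=\mathbf i_{\la|\mu}$, and the cross terms involving the second string do not cancel. Concretely, take $m=n=2$, $r=2$, $\la|\mu=(0,0)|(1,1)$, $\xi|\eta=(1,1)|(0,0)$, $w=s_1$; then $A$ has $a_{3,2}=a_{4,1}=1$ and all other entries $0$, $\mathbf i_{\la|\mu}=(3,4)$, $\mathbf i_{\xi|\eta}w^{-1}=(2,1)$, so the left-hand side is $x_{2,3}x_{1,4}$ while $x^{A^t}=x_{1,4}x_{2,3}$. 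Relation (4) gives $x_{1,4}x_{2,3}=(-1)^{(\hat 1+\hat 4)(\hat 2+\hat 3)}x_{2,3}x_{1,4}=-x_{2,3}x_{1,4}$, so the left-hand side equals $-x^{A^t}$ with $x^{A^t}$ a nonzero basis element by Theorem~\ref{coor}; but $\widehat{w^{-1}}=\hat 2\cdot\hat 1=0$ predicts the sign $+1$. (Here $e=(\hat 2+\hat 3)(\hat 1+\hat 4)=1$, consistent with the direct computation.) So the honest conclusion of your argument is $x_{\mathbf i_{\xi|\eta}w^{-1},\,\mathbf i_{\la|\mu}}=(-1)^{e}x^{A^t}$ with $e$ as above; the exponent $\widehat{w^{-1}}$ in the statement, which the paper's one-line proof never actually checks against relation (4), has to be replaced by this Koszul exponent, and your plan to reconcile the two cannot succeed as written.
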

\begin{proof}
To see the last assertion, note that, if $A=(a_{i,j})$, then
$$x_{\mathbf i_{\xi|\eta}w^{-1},\mathbf i_{\lambda|\mu}}=x_{1,1}^{a_{1,1}}x_{2,1}^{a_{1,2}}\cdots
x_{m+n,1}^{a_{m+n,1}}x_{1,2}^{a_{2,1}}x_{2,2}^{a_{2,2}}\cdots
x_{m+n,2}^{a_{2,m+n}}\cdots\cdots x_{m+n,m+n}^{a_{m+n,m+n,}}.$$ The
assertion  follows from the relation \ref{quantumco}(4).
\end{proof}

Recall from \S3 that we wrote $w_A^-$ for $w$ if
$\jmath(\la|\mu,w,\xi|\mu)=A$. For notational simplicity, we will
write $w_A$ for $w_A^-$ in the rest of the section. Note that $A\in
M(m|n, r)$ if and only if $w_A$ satisfies the trivial intersection
property \ref{TIP}(3):
\begin{equation*}\label{trivialinter}\mathfrak
S_{\lambda^*}\cap w_A\mathfrak S_{^*\!\eta} w_A^{-1}=\{1\}, \text{ and }
\mathfrak S_{{}^*\!\mu}\cap w_A\mathfrak S_{\xi^*} w_A^{-1}=\{1\}.\end{equation*}

Let $\sA(m|n, r)^*$ be the dual space of $\sA(m|n, r)$. It
is well-known that   $\sA(m|n, r)^*$ is a superalgebra with
multiplication given by the following rule
$$
(fg) (v)=(f\otimes g)\Delta(v), \text{ for all $v\in \sA(m|n,
r)^\ast$}.$$ Note that the action of $f\otimes g$ on $\Delta(v)=\sum v_{(1)}\otimes v_{(2)}$ is
given by $$ (f\otimes g)(v_{(1)}\otimes v_{(2)})=(-1)^{ij} f (v_{(1)}) \otimes g
(v_{(2)})$$ if the degree of $g$ (resp. $v_{(1)}$) is $i$ (resp. $j$).

For  $A\in M(m|n, r)$, let  $f_A\in \sA(m|n, r)^*$ be defined by
 $f_A(x^B)=\delta_{A, B}$, for  $B\in M(m|n, r)$. Then $\{f_A\}_{A\in M(m|n, r)}$
 is the dual basis of $\mathcal B_r$.

Since the $\mathbb Z_2$-grading degree of the monomial $ x_{\mathbf i,
\mathbf j}$ is $\sum_{k=1}^{r}( \hat {i}_k + \hat {j}_k)\in \mathbb
Z_2$, it is natural to set the $\mathbb Z_2$-grading degree $\hat {f_A}$  of $f_A$ to be
\begin{equation}\label{grade} \hat {f}_A=\sum_{k=1}^r (\hat i_k+\hat
{j}_k)=|\mu|+|\eta|(\text{mod}\,2)=\hat A,\end{equation} where $\mathbf i=\mathbf i_{\lambda|\mu}$,
 $\mathbf j=\mathbf i_{\xi|\eta } w_A^{-1}$, and $A=\jmath(\la|\mu,w,\xi|\mu)$.


    Manin~\cite{Manin} proved that
the $\field$-space $V(m|n)_\field$, regarded as the specialization
of the $\sZ$-free module $V(m|n)$ in \S8, is a (right)
$\sA(m|n)$-comodule with structure map
$$\delta: V(m|n)_\field\rightarrow
 V(m|n)_\field \otimes \sA(m|n),\ \  e_i\longmapsto \sum_{j}  e_j\otimes x_{j,i}$$
Since  $\sA(m|n)$ is a super-bialgebra,  $V(m|n)_\field^{\otimes r }$ is
also an $\sA(m|n)$-comodule and the structure map is induced by the
structure map $\delta$ on $V(m|n)_\field$. By abuse of notation, we still
use $\delta$ to denote the structure map.  Thus, for
any $e_{\mathbf
i}=e_{i_1}\otimes e_{i_2}\otimes \cdots \otimes e_{i_r}$ with $\mathbf i\in I(m|n, r)$,
\begin{equation}\label{tensor}
\delta(e_{\mathbf i}) =\sum_{\mathbf j\in I(m|n, r)}(-1)^{\sum_{1\le
k<l\le r} \hat j_k( \hat j_l+ \hat i_l)}e_{\mathbf j}\otimes x_{\mathbf j,\mathbf i}. \end{equation}
Restriction makes  $V(m|n)_\field^{\otimes r}$ into an $\sA(m|n,
r)$-comodule, and hence   a left  $\sA(m|n, r)^*$-module  with
the  action given by
$$
f\cdot e =(\id_{V(m|n)_\field^{\otimes r}}\otimes f )\delta (e), \quad
\forall f\in \sA(m|n, r)^*, e\in V(m|n)_\field^{\otimes r}.$$


\begin{Lemma} The action of $\sA(m|n, r)^\ast$ on $V(m|n)_\field^{\otimes r}$  is faithful.\end{Lemma}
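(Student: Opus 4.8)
The plan is to argue directly from the dual basis. Write a general element of $\sA(m|n,r)^*$ as $f=\sum_{A\in M(m|n,r)}c_Af_A$, where $\{f_A\}_{A\in M(m|n,r)}$ is the basis dual to $\mathcal B_r=\{x^A\}$ (Corollary~\ref{base}), and assume that $f$ acts as zero on $V(m|n)_\field^{\otimes r}$. It suffices to show that $c_B=0$ for every $B\in M(m|n,r)$; then $f=0$ and the action is faithful.

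Fix $B\in M(m|n,r)$ and write $B=\jmath(\la|\mu,w_B,\xi|\eta)$ with $w_B=w_B^-$ the shortest element of the associated double coset; since $B\in M(m|n,r)$, the element $w_B$ has the trivial intersection property of Lemma~\ref{TIP}(3), and Lemma~\ref{rels} gives $x_{\mathbf i_{\la|\mu},\,\mathbf i_{\xi|\eta}w_B^{-1}}=x^B$. I would then apply $f$ to the basis vector $e:=e_{\mathbf i_{\xi|\eta}w_B^{-1}}$ of the comodule: from \eqref{tensor} and the sign convention for applying $f$ in the second tensor slot,
\[
f\cdot e=(\id\otimes f)\,\delta(e)=\sum_{\mathbf j\in I(m|n,r)}\varepsilon_{\mathbf j}\,f\!\bigl(x_{\mathbf j,\,\mathbf i}\bigr)\,e_{\mathbf j},\qquad\mathbf i:=\mathbf i_{\xi|\eta}w_B^{-1},
\]
for suitable signs $\varepsilon_{\mathbf j}\in\{1,-1\}$. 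Since $\{e_{\mathbf j}\}$ is a basis and each $\varepsilon_{\mathbf j}$ is a unit, $f\cdot e=0$ forces $f(x_{\mathbf j,\mathbf i})=0$ for all $\mathbf j\in I(m|n,r)$; taking $\mathbf j=\mathbf i_{\la|\mu}$ and using Lemma~\ref{rels} once more gives $f(x^B)=0$. As $f(x^B)=c_B$ by duality, $c_B=0$, as desired.

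The argument is a single unwinding of the comodule structure, so there is no serious obstacle; the one place where the earlier combinatorics is genuinely needed is the fact that every basis monomial $x^B$ with $B\in M(m|n,r)$ arises as an honest matrix coefficient $x_{\mathbf j,\mathbf i}$ with $\mathbf i,\mathbf j\in I(m|n,r)$ of the comodule $V(m|n)_\field^{\otimes r}$ --- this is precisely Lemma~\ref{rels} combined with the identification of $\mathfrak S_r$-double cosets with $M(m|n,r)$ recorded in \eqref{jmath--}. The signs $\varepsilon_{\mathbf j}$ (that is, the $(-1)^{\sum_{1\le k<l\le r}\hat j_k(\hat j_l+\hat i_l)}$ from \eqref{tensor} together with the Koszul sign) play no role, since faithfulness is unaffected by multiplying coefficients by units.
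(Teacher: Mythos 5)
Your proof is correct and is essentially the argument the paper gives: unwind $f\cdot e_{\mathbf i}=0$ via \eqref{tensor} to get $f(x_{\mathbf j,\mathbf i})=0$ for all $\mathbf i,\mathbf j$, then specialize the indices and invoke Lemma~\ref{rels} and Corollary~\ref{base} to conclude $f$ kills every basis monomial $x^B$. The only cosmetic difference is that you read off $x^B$ from $x_{\mathbf i_{\la|\mu},\,\mathbf i_{\xi|\eta}w_B^{-1}}$ while the paper uses the transposed form $x_{\mathbf j,\mathbf i_{\la|\mu}}$, which is immaterial since transposition permutes $M(m|n,r)$.
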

\begin{proof}  Suppose $f\cdot e_{\mathbf i}=0$ for all $\mathbf
i\in I(m|n, r)$. By (\ref{tensor}), $f(x_{\mathbf j,\mathbf i})=0$
for all $\mathbf i, \mathbf j\in I(m|n, r)$. In particular,
$f(x_{\mathbf j,\mathbf i_{\lambda|\mu}})=0$ for all
$\lambda|\mu\in \Lambda(m|n, r)$ and  all $ \mathbf j\in I(m|n, r)$.
By Corollary~\ref{base} and Lemma~\ref{rels}, $f=0$.\end{proof}

With the definition of $\hat f_A$, it would be possible to explicitly describe the action $f_A\cdot e_{\mathbf i_{\xi|\eta}}$, and hence, to make a comparison between bases $\{f_A\}$ and $\{\phi_A\}$ under the isomorphism in Theorem \ref{mainquper}.

\begin{Prop}\label{comodule} The linear map    $\check{\sR}: V(m|n)_\field^{\otimes
2}\rightarrow V(m|n)_\field^{\otimes 2}$ defined in \eqref{rmatrix}
is an $\sA(m|n,r)$-comodule homomorphism. Moreover, the actions of
$A(m|n, r)^*$ and $\sH_\field$ on $V(m|n)_\field^{\otimes r}$
commute.
\end{Prop}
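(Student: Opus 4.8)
The plan is to reduce both assertions to the single statement that $\check\sR$ is an endomorphism of $V(m|n)_\field^{\otimes 2}$ as an $\sA(m|n)$-comodule, and then to propagate this to $V(m|n)_\field^{\otimes r}$ and dualise. Writing $\check\sR$ as a linear endomorphism $e_c\otimes e_d\mapsto\sum_{a,b}R^{ab}_{cd}\,e_a\otimes e_b$, with coefficients $R^{ab}_{cd}\in\field$ read off from \eqref{rmatrix} (so $R^{ab}_{cd}=0$ unless $\{a,b\}=\{c,d\}$ as multisets), the comodule condition $\delta\circ\check\sR=(\check\sR\otimes\id)\circ\delta$ becomes, upon expanding by means of \eqref{tensor} with $r=2$ and comparing coefficients of $e_a\otimes e_b$,
$$\sum_{p,q}(-1)^{\hat p(\hat q+\hat d)}R^{ab}_{pq}\,x_{pc}x_{qd}=\sum_{p,q}(-1)^{\hat a(\hat b+\hat q)}R^{pq}_{cd}\,x_{ap}x_{bq}\qquad(1\le a,b,c,d\le m+n).$$
Each such identity involves only the finitely many quadratic monomials $x_{pc}x_{qd}$ with $\{p,q\}=\{a,b\}$; running through the index patterns dictated by the four clauses of \eqref{rmatrix} (the degenerate ones $c=d\le m$ and $c=d\ge m+1$, and the cases $c<d$, $c>d$), one checks that every such identity is a consequence of the relations (1)--(5) of Definition~\ref{quantumco}; indeed $\sA(m|n)$ is, by construction, the coordinate bialgebra attached to $\check\sR$, the standing hypothesis $v^2\neq-1$ entering precisely through the diagonal entries of $\check\sR$ (relation (1)). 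Hence the displayed identities hold in $\sA(m|n)$, so $\check\sR$ is a comodule endomorphism of $V(m|n)_\field^{\otimes 2}$; by restriction it is also an $\sA(m|n,r)$-comodule homomorphism.

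For the second assertion, note first that $\check\sR$ is even: by \eqref{rmatrix} it sends each basis vector $e_c\otimes e_d$, of $\mathbb Z_2$-degree $\hat c+\hat d$, to a linear combination of basis vectors of the same degree. Since the coaction \eqref{tensor} on $V(m|n)_\field^{\otimes r}$ is the one obtained by iterating $\delta$ over the tensor factors, the operator $\check\sR_i=\id^{\otimes(i-1)}\otimes\check\sR\otimes\id^{\otimes(r-i-1)}$ of Lemma~\ref{key11} is a tensor product of the even comodule endomorphisms $\id$ and $\check\sR$, and is therefore itself a comodule endomorphism of $V(m|n)_\field^{\otimes r}$, i.e. $\delta\circ\check\sR_i=(\check\sR_i\otimes\id)\circ\delta$. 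Consequently, for any $f\in\sA(m|n,r)^*$ and $e\in V(m|n)_\field^{\otimes r}$,
$$f\cdot(\check\sR_i e)=(\id\otimes f)\,\delta(\check\sR_i e)=(\id\otimes f)(\check\sR_i\otimes\id)\,\delta(e)=\check\sR_i\,(\id\otimes f)\,\delta(e)=\check\sR_i\,(f\cdot e),$$
the third equality holding because $\check\sR_i\otimes\id$ and $\id\otimes f$ act on disjoint tensor factors and $\check\sR_i$ is even, so that the Koszul signs agree on both sides. By Proposition~\ref{iso} and Lemma~\ref{key11} the right $\sH_\field$-action on $V(m|n)_\field^{\otimes r}$ is realised through the subalgebra generated by the operators $\check\sR_i$ (equivalently, $\sT_i$ acts as $\check\sR_i$); therefore this action commutes with the left $\sA(m|n,r)^*$-action, as claimed.

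The step I expect to be the main obstacle is the verification of the displayed identities in the first paragraph. The difficulty there is purely one of sign bookkeeping: two independent families of super-signs — the factor $(-1)^{\sum_{k<l}\hat j_k(\hat j_l+\hat i_l)}$ built into the coaction \eqref{tensor} and the factors $(-1)^{\hat c\hat d}$ in the $\check\sR$-matrix \eqref{rmatrix} — must be reconciled with the sign conventions occurring in relations (1)--(5) of Definition~\ref{quantumco}. Once the indices are separated into the handful of cases imposed by \eqref{rmatrix}, each comparison is a short computation. Everything in the second paragraph is then formal, the only point deserving attention being that the passage from $V(m|n)_\field^{\otimes 2}$ to $V(m|n)_\field^{\otimes r}$ uses the evenness of $\check\sR$ to remove all would-be Koszul signs.
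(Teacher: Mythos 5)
Your proposal follows essentially the same route as the paper: reduce everything to the identity $\delta\circ\check\sR=(\check\sR\otimes\id)\circ\delta$ on $V(m|n)_\field^{\otimes 2}$, verify it coefficient-by-coefficient against relations (1)--(5) of Definition~\ref{quantumco} according to the four clauses of \eqref{rmatrix}, and then deduce formally that the $\sA(m|n,r)^*$- and $\sH_\field$-actions commute. The only substantive difference is that the paper actually writes out one representative case ($e_i\otimes e_j$ with $i>j$) of the sign/coefficient comparison, whereas you assert the whole case check; since you correctly identify that computation as the crux, you should carry out at least one case to make the argument complete.
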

\begin{proof} We need verify \begin{equation}\label{comh}\delta\circ\check{\sR}=(\id_{\sA(m|n, r)}\otimes \check{\sR} )\circ \delta
\end{equation}
where $\delta $ is the comodule structure map on $V(m|n)_\field^{\otimes 2}$.
We verify the case $e_i\otimes e_j$ with $i>j$. One can verify the
other cases similarly. We have
$$\begin{aligned} (\id_{\sA(m|n, r)}& \otimes \check{\sR} ) \delta (e_i\otimes e_j)=v
\sum_{k\le m} x_{i,k}x_{j, k} \otimes e_k\otimes e_k\\ & +
(-1)^{\hat j} v^{-1} \sum_{k\ge m+1}  x_{i,k} x_{j,k}  \otimes
e_k\otimes e_k + \sum_{l<k} (-1)^{\hat k \hat j}  x_{ik}x_{jl}
\otimes e_l\otimes e_k\\ & +\sum_{l>k}\left\{ (-1)^{\hat k \hat j}
x_{ik}x_{jl}+(-1)^{\hat l \hat j+ \hat k \hat l}
(v-v^{-1}) x_{il} x_{jk}\right\}\otimes e_l\otimes e_k\\
\end{aligned}. $$
On the other hand,
$$
\begin{aligned}  \delta \circ \check{\sR} (e_i\otimes
e_j)=&\delta((-1)^{\hat i \hat j}  e_j\otimes e_i+(v-v^{-1})
e_i\otimes e_j)
\\
=& (-1)^{\hat i \hat j}  \sum_{k,l} (-1)^{\hat k \hat i+ \hat k \hat
l }x_{jk}x_{il} e_k\otimes e_l \\ & + (v-v^{-1}) \sum_{k, l}
x_{ik}x_{jl}
(-1)^{\hat k \hat j +\hat k \hat l} e_k\otimes e_l\\
\end{aligned}
$$
Comparing the coefficients of $e_k\otimes e_l$ via
Definition~\ref{quantumco}  yields
 $\delta\circ
\check{\sR}(e_i\otimes e_j) =(\id_{\sA(m|n, r)}\otimes \check{\sR} )
\delta (e_i\otimes e_j)$. This proves (\ref{comh}).  Further, it
implies that the  actions of $\sA(m|n, r)^*$ and $\sH_\field$ on
$V(m|n)_\field^{\otimes r}$ commute. (One can also verify it by the
definition of the action of the linear dual of a cosuperalgebra $\sA$
on an $\sA$-cosupermodule. See the definition given in
\cite[p.45]{BK:sergeev}.)
\end{proof}

The following result is the quantum version of \cite[Th.~5.2]{BKu}.

\begin{thm}\label{mainquper} The quantum Schur superalgebra $\sS(m|n,r)_\field$ is isomorphic
to the algebra $\sA(m|n, r)^*$. In other words, we have an algebra isomorphism
 $$\sA(m|n, r)^*\cong \text{\rm End}_{\sH_\field} (\oplus_{\lambda|\mu\in \Lambda(m|n, r) }
x_\lambda y_\mu\sH_\field).$$\end{thm}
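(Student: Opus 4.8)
The plan is to realize $\sA(m|n,r)^*$ as $\text{\rm End}_{\sH_\field}(V(m|n)_\field^{\otimes r})$ via the comodule structure on the tensor superspace, and then transport this through the isomorphism $V(m|n)^{\otimes r}\cong\fT(m|n,r)$ of Proposition~\ref{iso} to identify it with $\sS(m|n,r)_\field$. First I would check that $\rho\colon\sA(m|n,r)^*\to\text{\rm End}_\field(V(m|n)_\field^{\otimes r})$, $f\mapsto(e\mapsto f\cdot e=(\id\otimes f)\delta(e))$, is an algebra homomorphism. This is the standard fact that a right comodule over a super-bialgebra is a left module over its linear dual: the identity $(fg)\cdot e=f\cdot(g\cdot e)$ is precisely coassociativity of $\delta$ together with the sign rule $(f\otimes g)(v_{(1)}\otimes v_{(2)})=(-1)^{ij}f(v_{(1)})g(v_{(2)})$ defining the product on $\sA(m|n,r)^*$, so it is a short sign-chase. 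By Proposition~\ref{comodule} the $\sA(m|n,r)^*$-action commutes with the right $\sH_\field$-action, so $\rho$ maps into $\text{\rm End}_{\sH_\field}(V(m|n)_\field^{\otimes r})$; and $\rho$ is injective by the faithfulness of this action proved just above (if $\rho(f)=0$ then $f(x_{\mathbf j,\mathbf i})=0$ for all $\mathbf i,\mathbf j\in I(m|n,r)$ by \eqref{tensor}, hence $f(x^B)=0$ for all $B\in M(m|n,r)$ by Lemma~\ref{rels}, so $f=0$ by Corollary~\ref{base}).

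To finish, I would compare dimensions. By Proposition~\ref{iso} (equivalently Corollary~\ref{TensorSSA}), $V(m|n)^{\otimes r}\cong\fT(m|n,r)$ as right $\sH$-modules, so after base change $V(m|n)_\field^{\otimes r}\cong\fT(m|n,r;\field)$ as right $\sH_\field$-modules, whence
\[
\text{\rm End}_{\sH_\field}(V(m|n)_\field^{\otimes r})\cong\text{\rm End}_{\sH_\field}(\fT(m|n,r;\field))=\sS(m|n,r;\field),
\]
which is free of rank $|M(m|n,r)|$ over $\field$ by Theorem~\ref{base change}. On the other hand $\dim_\field\sA(m|n,r)^*=\dim_\field\sA(m|n,r)=|M(m|n,r)|$ by Corollary~\ref{base}, valid since $v^2\neq-1$ throughout this section. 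Hence the injective algebra homomorphism $\rho$ is an isomorphism between $\field$-algebras of equal finite dimension, giving $\sA(m|n,r)^*\cong\sS(m|n,r)_\field$ and, equivalently, $\sA(m|n,r)^*\cong\text{\rm End}_{\sH_\field}(\bigoplus_{\lambda|\mu\in\Lambda(m|n,r)}x_\lambda y_\mu\sH_\field)$.

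I expect the main obstacle to be the consistent bookkeeping of $\mathbb Z_2$-gradings and Koszul signs rather than the dimension argument: one must check that the sign conventions in Definition~\ref{quantumco}, in the comodule formula \eqref{tensor}, in the grading \eqref{grade} of $f_A$, and in the supermultiplication \eqref{supermult} of $\sS(m|n,r)$ are mutually compatible, so that $\rho$ is degree-preserving (with $\rho(f_A)$ of $\mathbb Z_2$-degree $\hat A$) and, if one wishes to state the result as a superalgebra isomorphism, $\rho$ intertwines the super-twisted products; the module-theoretic $\rho$ a priori intertwines convolution with ordinary composition of operators, so a parity adjustment on $\text{\rm End}_{\sH_\field}(V(m|n)_\field^{\otimes r})$ may be needed to reconcile it with \eqref{supermult}. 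Once the conventions are aligned this should be routine, requiring no new idea beyond the three ingredients cited: the comodule action, its faithfulness, and the rank formula.
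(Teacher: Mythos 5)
Your proposal is correct and follows essentially the same route as the paper: faithfulness of the $\sA(m|n,r)^*$-action on $V(m|n)_\field^{\otimes r}$, commutation with the $\sH_\field$-action via Proposition~\ref{comodule}, identification of $\text{\rm End}_{\sH_\field}(V(m|n)_\field^{\otimes r})$ with $\sS(m|n,r)_\field$ through Proposition~\ref{iso}, and a dimension count using Theorem~\ref{base change} and Corollary~\ref{base}. Your closing remarks on reconciling the Koszul-sign conventions with the supermultiplication \eqref{supermult} are a reasonable extra caution, but they do not change the argument, which matches the paper's proof.
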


\begin{proof} We have already proved that $\sA(m|n, r)^*$ acts
faithfully  on $V(m|n)_\field^{\otimes r}$. So,  $\sA(m|n, r)^\ast$ is a
subalgebra of End$_F(V(m|n)_\field^{\otimes r})$.  By
Proposition \ref{comodule}, $\sA(m|n, r)^\ast$ is a
subalgebra of End$_{\sH_\field} (V(m|n)_\field^{\otimes r})$. A
dimensional comparison (see Theorem \ref{base change} and Corollary \ref{base}) gives the required isomorphism.
\end{proof}

 \providecommand{\bysame}{\leavevmode ---\ }
\providecommand{\og}{``} \providecommand{\fg}{''}
\providecommand{\smfandname}{and}
\providecommand{\smfedsname}{\'eds.}
\providecommand{\smfedname}{\'ed.}
\providecommand{\smfmastersthesisname}{M\'emoire}
\providecommand{\smfphdthesisname}{Th\`ese}

\end{document}